\numberwithin{equation}{section}
\newtheorem{theorem}{Theorem}[section]
\newtheorem{proposition}[theorem]{Proposition}
\newtheorem{conjecture}[theorem]{Conjecture}
\newtheorem{corollary}[theorem]{Corollary}
\newtheorem{remark}[theorem]{Remark}
\newtheorem{lemma}[theorem]{Lemma}
\newtheorem{example}[theorem]{Example}
\newtheorem{definition}[theorem]{Definition}
\newtheorem{claim}[theorem]{Claim}
\newtheorem{acknowledgements}[theorem]{Acknowledgements}
\def\AA{\mathfrak{A}}
\def\ZZ{\mathbb{Z}}
\def\QQ{\mathbb{Q}}
\def\CC{\mathbb{C}}
\def\UU{\mathcal{U}}
\def\II{\mathcal{I}}
\def\JJ{\mathcal{J}}
\def\PP{\mathbb{P}}
\def\1{\mathbb{1}}
\def\gg{\mathfrak{g}}
\def\mm{\mathfrak{m}}
\def\nn{\mathfrak{n}}
\def\hh{\mathfrak{h}}
\def\OO{\mathcal{O}}
\def\PP{\mathbb{P}}
\begin{document}

\title{Cluster Algebras, Symplectic Leaves and Quantum Groups}
\author{Sebastian Zwicknagl}

\begin{abstract}
  This paper investigates the Poisson geometry of cluster algebras and the corresponding ideal theory of quantum cluster algebras. We then show how our approach can be applied to the ring theory of quantized coordinate rings. We give a new construction for the Dixmier map constructed by Yakimov from the space of symplectic leaves on $\CC[G]$ to the space of primitive ideals on $\CC_q[G]$ and give further evidence that this map is a homeomorphism. 
\end{abstract}
 
\maketitle

\tableofcontents
 \section{Introduction}
 The present paper studies the Poisson and non-commutative geometry of cluster algebras and quantum cluster algebras which resemble cluster algebras obtained from Lie theory. It  is part of a program to investigate the ideal theory of quantum groups. It complements \cite{ZW tpc}  and \cite{ZW acy} which focused on the toric Poisson prime ideals and the ideals in cluster algebras arising from acyclic quivers, respectively. 
 
 Many quantized coordinate rings such as the quantum matrices--and, therefore, the quantized coordinate ring $\CC_q[SL_n]$ of $SL_n$--have been proven to admit quantum cluster algebra structures (see Gei\ss, Leclerc and Schr\"oer's \cite{GLSq}), and it was conjectured by Berenstein and Zelevinsky \cite{bz-qclust} that $\CC_q[G]$ is an upper quantum cluster algebra for arbitrary semisimple $G$. Indeed, the semiclassical limit of $\CC_q[G]$  is the so-called standard Poisson structure on $\CC[G]$ which is known to be compatible with the upper cluster algebra structure constructed by Berenstein, Fomin and Zelevinsky in \cite{BFZ}. Cluster algebras were introduced to study the dual canonical bases, however,  there are also a number of prominent questions regarding the ideal theory of quantum groups and their classical limits. A conjecture  that has attracted a lot of attention over the last two decades states that the spectrum of $\CC_q[G]$ and the spectrum of Poisson prime ideals in the semiclassical limit are homeomorphic. This conjecture is an analogue of the Kirillov's Orbit Method  (see e.g.\cite{Kir2}), and it has been proven only in the cases of $SL_2$ and $SL_3$. Even though it has been long known that there exist bijections between these two sets, it is quite non-trivial to give a precise conjectural formulations of this homeomorphism. This was accomplished recently by Yakimov in \cite{Yak-spec} where the reader may also find more background information about the problem. As an application of our results on quantum cluster algebras and their classical limits, we are able to give  some apparently new evidence in this paper that the map is indeed a homeomorphism.     

Recall that a cluster algebra $\AA$ is given by combinatorial data, namely a cluster ${\bf x}=(x_1,\ldots, x_n)\subset A$ and a skew-symmetrizable $m\times n$-integer matrix $B$ where $m<n$. In order to define a quantum cluster algebra $\AA_q$ we also need a skew-symmetric integer matrix $\Lambda$ such that $(B,\Lambda)$ forms a compatible pair (see Section \ref{se:Cluster Algebras} for the definitions). The subalgebra $\CC_\Lambda[x_1,\ldots, x_n]$ generated by the elements of the cluster  ${\bf x}$ in $\AA_q$ forms a quantum affine space (see Appendix \ref{se:Poisson and quantum tori} for the definition) and our main idea is to study the ideal theory of the quantum cluster algebra via its intersection with these quantum affine spaces for various clusters. Indeed, the Laurent phenomenon of \cite{bz-qclust} which asserts that $\AA_q\subset \CC_\Lambda[x_1^\pm 1,\ldots, x_n^\pm 1]$ becomes a key tool in this approach and we will now outline how it allows us to improve on the two known constructions  of the spectra, due to Joseph \cite{Jo1} and Goodearl--Letzter (see e.g. \cite[Sect 2]{brown-goodearl}). 

In both cases, one first studies the torus invariant prime ideals (TIPs) in the algebra $\CC_q[G]$  and then stratifies the prime spectrum by assigning a prime $I$ to a TIP $\II$ if $\II$ is the maximal TIP contained in $I$. In order to consider the stratum assigned to a TIP $\II$ we consider the quotient   $A_\II=\CC_q[G]/\II$ and then localize $A_\II$ at a suitable multiplicative set. In the case of Joseph's approach one chooses the torus invariant elements $N_\II$ of $A_\II$ and in the case of Goodearl-Letzter one would use the torus invariant regular elements $R_\II$. The stratum of $\II$ is homeomorphic to the spectra $spec(A_\II[N_\II^{-1}])\cong  spec(A_\II[R_\II^{-1}])$ which are homeomorphic to the spectrum of a Laurent polynomial ring. Clearly, $N_\II\subset R_\II$.  Now, assume that $R_q[G]$ is a quantum cluster algebra.  Let ${\bf x}$ be a cluster, and denote by ${\bf X_\II}$ the set $\{x_1^{d_1}\ldots x_n^{d_n}: d_j=0\  \text{if}\ x_j\in \II\}$.  If we work with a suitable (for $\II$) cluster ${\bf x}$ then  $N_\II\subset {\bf X}_I\cap A_\II\subset R_\II$, and we can now choose to localize at ${\bf X}_I$.

The main advantage of our approach is that we can use the fact that if $\II\subset \JJ$ are two TIPs and we can find a cluster ${\bf x}$ that is suitable for both $\II$ and $\JJ$, then we can use the fact that  we use localization at related sets (namely cluster monomials from the same cluster) to gain information about the inclusion relations between primes of different strata. Indeed, we can construct inclusion preserving embeddings of the strata into the spectrum of $\CC_\Lambda[x_1,\ldots, x_n]$ (see Corollary \ref{cor:intersection spectra q}). 

As we develop analogous results for the Poisson spectra  of the semi-classical limits, we are able to give some additional evidence that both spaces are homeomorphic. We showcase this approach in the case of $SL_n$ which has a quantum cluster algebra structure.  We establish that for each  pair  of TIPs $\II$ and $\JJ$ with $\II\subset \JJ$ there exists a cluster that is suitable. Moreover, if we restrict our attention to  clusters associated to reduced expressions of $(w_0,w_0)\in W\times W$ where $w_0$ is the longest word of the Weyl group $W$ of $SL_n$ (in the sense of \cite{BFZ}), then we are able to construct a natural map from the  Poisson spectrum $P.spec(\CC[G])$ to the spectrum $spec(\CC_q[SL_n])$. If we restrict the map to Poisson primitive, resp.~primitive ideals, then it is the inverse of  Yakimov's Dixmier map (see \cite[Sect 4]{Yak-spec} and Section \ref{se:Dixmier map}).

 The paper itself is organized as follows. Section \ref{se:Cluster Algebras} recalls some basic facts about cluster algebras, quantum cluster algebras and compatible Poisson structures. Section \ref{se:COS} provides our main results on the TIPs of quantum cluster algebras, while we discuss in Section \ref{se:spectra} the spectra of quantum cluster algebras and the Poisson spectra of cluster algebras.   Section \ref{se:dbc as P strata}   and Section \ref{se:q-groups} introduce double Bruhat cells and quantum double Bruhat cells, and we show how to construct the Dixmier map from the cluster algebra data. Finally, we added two appendices in which we discuss torus invariant prime ideals in Noetherian rings and properties of quantum and Poisson planes and tori.
 
 \begin{acknowledgements}
  The author would like to thank  Milen Yakimov, Jeanne Scott and Arkady Berenstein for useful discussions about ring theory and cluster algebras on Grassmannians.
 \end{acknowledgements}

\section{Cluster Algebras}
\label{se:Cluster Algebras}
\subsection{Cluster algebras}
 \label {se:cluster algebras def}.
 In this section, we will review the definitions and some basic results on
cluster algebras, or more precisely, on cluster algebras of geometric type over the field of complex numbers $\CC$. Denote by $\mathfrak{F}=\CC(x_1,\ldots,
x_n)$ the field of fractions  in $n$ indeterminates.  Let $B$ be a $m\times n$-integer matrix such that its principal $m\times m$-submatrix is skew-symmetrizable. Recall that a $m
\times
m$-integer matrix $B'$ is called skew-symmetrizable if
there exists a $m \times m$-diagonal matrix  $D$ with positive integer entries  
such that  $B' \cdot D$ is skew-symmetric.
We call the tuple $(x_1,\ldots,x_n, B)$ the {\it initial seed} of the cluster
algebra and   $ (x_1,\ldots x_m)$ a cluster, while ${\bf x}=(x_1,\ldots x_n)$ is
called an extended cluster. The cluster variables $x_{m+1},\ldots,x_n$ are called {\it coefficients}. We will now construct more  clusters, $(y_1,\ldots,
y_m)$ and extended clusters ${\bf
y}=(y_1,\ldots, y_n)$, which are transcendence bases of $\mathfrak{F}$, and the
corresponding 
seeds $({\bf y}, \tilde B)$ in the following way.

For each real number $r$,  denote by $r^+$ and $r^-$ the numbers $r^+={\rm max}(r,0)$ and  $r^-={\rm
min}(r,0)$.
Given a skew-symmetrizable integer $m \times n$-matrix $B$, we define for each
$1\le i\le m$
the {\it exchange polynomial}
\begin{equation}
 P_i = \prod_{k=1}^n x_k^{b_{ik}^+}+ \prod_{k=1}^n  x_k^{-b_{ik}^-}\ .
\end{equation}

We can now define the new cluster variable $x_i'\in\mathfrak{F}$ via the equation
\begin{equation}
\label{eq:exchange}
 x_ix_i'=P_i\ . 
\end{equation}

This allows us to refer to the matrix $B$ as the {\it exchange matrix} of the
cluster $(x_1,\ldots,x_n)$, and to the relations  defined by Equation \ref{eq:exchange} for $i=1,\ldots,m$ as {\it exchange relations}. 

We obtain that $(x_1,x_2,\ldots, \hat x_i,x_i',x_{i+1},\ldots, x_n)$ is a
transcendence basis of $\mathfrak{F}$. We now define the new exchange matrix
$B_i=B'=(b_{ij}')$, associated to the new (extended) cluster $${\bf x}_i=(x_1,x_2,\ldots, \hat
x_i,x_i',x_{i+1},\ldots, x_n)$$
 by defining the coefficients $b_{ij}'$ as follows: 

$\bullet$ $b_{ij}' = -b_{ij}$ if $j \le n$ and $i = k$ or $j = k$,

$\bullet$ $b_{ij}' =  b_{ij} + \frac{|b_{ik} |b_{kj} + b_{ik} |b_{kj} |}{2}$ if
$j \le n$ and $i \ne k$ and $j \ne k$,

$\bullet$ $b_{ij}'=b_{ij}$ otherwise.

This algorithm is called  {\it matrix mutation}. Note that $B_i$ is again
skew-symmetrizable (see e.g.~\cite{FZI}). The process of obtaining a new seed is
called {\it cluster mutation}. The set of seeds obtained from a given seed $({\bf x},B)$ is  called the mutation equivalence class of  $({\bf x},B)$.

\begin{definition}
The cluster algebra $\mathfrak{A}\subset \mathfrak{F}$ corresponding to an
initial seed $(x_1,\ldots, x_n,B)$ is the subalgebra of $\mathfrak{F}$,
generated by the elements of all the clusters in the mutation equivalence class of $({\bf x},B)$ . We refer to the elements of the clusters as the {\it cluster variables}.
\end{definition}

\begin{remark}
Notice that  the coefficients, resp.~frozen variables $x_{m+1},\ldots, x_n$  
will never be mutated. Of course, that explains their name.
\end{remark}

\begin{remark}
	In many cases it is convenient and necessary to assume that the coefficients are invertible elements such as the cluster algebra structure on double Bruhat cells (Section \ref{se:dbc as P strata}). However, this is not so useful  for our project, as we are interested in ideals, and their closure relations, and unnecessary, because $\CC[G]$ admits a cluster algebra structure in our sense.
\end{remark}

We have the following fact, motivating the definition of cluster algebras in the
study of total positivity phenomena and canonical bases.

\begin{proposition} \cite[Section 3]{FZI}(Laurent phenomenon) Let $\mathfrak{A}$
be a cluster algebra with
initial extended cluster $(x_1,\ldots, x_n)$. Any cluster variable $x$ can be
expressed uniquely as a Laurent polynomial in the variables
$x_1,\ldots, x_n$ with integer coefficients.  
\end{proposition}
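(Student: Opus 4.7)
The uniqueness assertion is immediate: by construction each mutation replaces a single cluster variable by a rational function in the others, so every cluster (and in particular the initial one) is a transcendence basis of $\mathfrak{F}$. Hence $x_1, \ldots, x_n$ are algebraically independent, and a Laurent polynomial representation of an element of $\mathfrak{F}$ in the $x_i$, if it exists, is unique.

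For existence, I would induct on the distance $d$ in the exchange graph between the initial seed $({\bf x},B)$ and a seed whose extended cluster contains the target cluster variable. The base case $d=0$ is trivial, and $d=1$ is precisely the exchange relation \eqref{eq:exchange}, exhibiting $x_i'=P_i/x_i$ as a Laurent polynomial in ${\bf x}$. For the inductive step, any cluster ${\bf z}$ at distance $d+1$ is obtained from some cluster ${\bf y}$ at distance $d$ by mutation at an index $k$, giving the new variable $z_k=P_k^{{\bf y}}/y_k$, where $P_k^{{\bf y}}$ is the exchange polynomial computed in ${\bf y}$. Substituting into $P_k^{{\bf y}}/y_k$ the inductively provided Laurent expressions for $y_1,\ldots,y_n$ in terms of $x_1,\ldots,x_n$ produces $z_k$ as a rational function in the $x_i$. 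The entire content of the theorem is that the denominators contributed both by the factor $1/y_k$ and by the substituted Laurent expressions for the other $y_j$ must cancel against the corresponding numerators.

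The main obstacle is to verify precisely this cancellation. I would handle it via Fomin--Zelevinsky's Caterpillar Lemma from \cite[Section 3]{FZI}: one organizes the mutations into auxiliary ``caterpillar'' chains of seeds formed by alternating mutations at pairs of indices, and proves by a simultaneous induction along such chains that consecutive exchange Laurent polynomials are pairwise coprime in the unique factorization domain $\CC[x_1^{\pm 1},\ldots,x_n^{\pm 1}]$. This coprimality supplies exactly the divisibility needed for the offending denominator $y_k$ (expanded in the $x_i$) to divide the numerator it appears against. The key algebraic inputs are the explicit sign-dependent form of matrix mutation on the entries $b_{ij}$, together with the fact that the two monomials appearing in each exchange polynomial $P_i$ have disjoint support, so that $P_i$ is primitive and fits naturally into the coprimality arguments. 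Once the caterpillar coprimalities are established, the Laurent property for $z_k$ follows directly from the exchange relation at $({\bf y},\tilde B)$, completing the induction.
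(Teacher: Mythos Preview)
The paper does not give its own proof of this proposition; it simply records the Laurent phenomenon as a known result and cites \cite[Section 3]{FZI}. So there is nothing in the paper to compare your argument against. Your outline is a faithful high-level summary of the original Fomin--Zelevinsky proof: uniqueness from algebraic independence of the initial cluster, existence by induction along the exchange graph, with the Caterpillar Lemma supplying the coprimality needed to clear the denominator $y_k$ after substitution. That is exactly the strategy of \cite{FZI}, so in that sense your approach and the cited source agree.

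One small caution: your inductive framing (``distance $d$ in the exchange graph'') is not quite how the argument in \cite{FZI} is organized. A naive induction on $d$ does not directly work, because knowing that each $y_j$ is a Laurent polynomial in ${\bf x}$ does not by itself give the coprimality of the substituted $y_k$ with the other factors. The Caterpillar Lemma is not an auxiliary device bolted onto that induction; it replaces it. Fomin and Zelevinsky instead prove a stronger statement along a caterpillar tree of seeds, carrying the coprimality hypotheses as part of the induction. If you were to write this out in full you would need to follow that structure rather than the distance-$d$ scheme you sketch.
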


Moreover, it has been conjectured for all cluster algebras, and proven in many cases (see
e.g.~\cite{MSW} and \cite{FST},\cite{FT})  that the coefficients of these polynomials are
positive.

Finally, we recall the definition of the lower bound of a cluster algebra $\AA$
corresponding to a seed $({\bf x}, B)$. Denote by $y_i$ for $1\le i\le m$ the
cluster variables obtained from ${\bf x}$ through mutation at $i$; i.e., they
satisfy the relation $x_iy_i=P_i$.
 \begin{definition}\cite[Definition 1.10]{BFZ}
\label{def:lower bounds}
Let $\AA$ be a cluster algebra and let $({\bf x}, B)$  be a seed. 
The lower bound $ \mathfrak{L}_B \subset \AA$ associated with $({\bf x}, B)$ is the algebra
 generated by the set  $\{x_1,\ldots x_n,y_1\ldots, y_m\}$.
\end{definition}

\subsection{Upper cluster algebras}
\label{se:upper cluster algebras}
 Berenstein, Fomin and Zelevinsky
introduced the related concept of upper cluster algebras in \cite{BFZ}.  

\begin{definition}
  Let $\mathfrak{A} \subset \mathfrak{F}$ be a
  cluster algebra with initial cluster $(x_1, \ldots, x_n, B)$ and let, as above, $y_1,
  \ldots, y_m$ be the cluster variables obtained by mutation in the directions
  $1, \ldots, m$, respectively.
  
  \noindent(a) The upper bound $\UU_{{\bf x},B} ( \mathfrak{A})$ is defined as
  \begin{equation} 
\UU_{{\bf x},B} ( \mathfrak{A}) = \bigcap_{j = 1}^m \CC [x_1^{\pm 1}, \ldots
     x_{j - 1}^{\pm 1}, x_j, y_j, x_{j + 1}^{\pm 1}, \ldots, x_m^{\pm 1},
x_{m+1},\ldots,x_n] \ . \end{equation}

\noindent(b) The upper cluster algebra $\UU  ( \mathfrak{A})$ is defined as
$$\UU  ( \mathfrak{A})=\bigcap_{({\bf x'},B')}\UU_{\bf x'} ( \mathfrak{A})\ ,$$
where the intersection is over all seeds $({\bf x}',B')$ in the mutation equivalence class of $({\bf x},B)$.
\end{definition}

Observe that each cluster algebra is contained in its upper cluster algebra (see \cite{BFZ}).

\subsection{Poisson structures}
\label{se:poissonstructure}
Cluster algebras are closely related to Poisson algebras. In this section we recall some of the related notions and results. 
 
\begin{definition}
Let $k$ be a field of charactieristic $0$. A Poisson algebra is a pair
$(A,\{\cdot,\cdot\})$ of a commutative $k$-algebra $A$ and a bilinear map
$\{\cdot,\cdot\}:A\otimes A\to A$,  satisfying for all $a,b,c\in A$:
\begin{enumerate}
\item skew-symmetry: $\{a,b\}=-\{b,a\}$ 
\item Jacobi identity: $\{a,\{b,c\}\}+\{c,\{a,b\}\}+\{b,\{c,a\}\}=0$,
\item Leibniz rule: $a\{b,c\}=\{a,b\}c+b\{a,c\}$.
\end{enumerate}
\end{definition} 
If there is no room for confusion we will refer to a Poisson algebra $(A,\{\cdot,\cdot\})$ simply as  $A$.

 Gekhtman, Shapiro and
Vainshtein showed in \cite{GSV} that one can associate Poisson structures to
cluster algebras in the following way.   Let $\mathfrak{A} \subset \CC[x_1^{\pm 1}, \ldots,
x_n^{\pm 1}] \subset \mathfrak{F}$ be a cluster algebra. A Poisson structure
$\{\cdot, \cdot\}$ on
$\CC [x_1, \ldots, x_n]$ is called log-canonical if   $\{
x_i,x_j\}=\lambda_{ij} x_ix_j$ with $\lambda_{ij}\in \CC$ for all $1\le i,j\le n$.
 
The Poisson structure can be naturally extended to $\mathfrak{F}$ by using the identity $0=\{f\cdot f^{-1},g\}$  for all $f,g\in\CC [x_1, \ldots, x_n]$.  We thus obtain that  $\{f^{-1},g\}=-f^{-2}\{f,g\}$ for all $f,g\in \mathfrak{F}$. 
We call $\Lambda=\left( \lambda_{ij}\right)_{i,j=1}^n$ the {\it coefficient matrix} of
the Poisson structure. We say that a Poisson structure on  $\mathfrak{F}$ is compatible with
$\mathfrak{A}$ if it is log-canonical with respect to each cluster $(y_1,\ldots,
y_n)$; i.e., it is log canonical on $\CC[y_1, \ldots, y_n]$.

\begin{remark}
\label{re:Class Poisson}
 A  classification of Poisson structures compatible with cluster algebras was obtained by Gekhtman, Shapiro and Vainshtein in \cite[Theorem 1.4]{GSV}.  It is easy to see from their description that if $n$ is even, then the cluster algebra has an admissible Poisson structure of maximal rank.
\end{remark}

We will refer to  the cluster algebra $\AA$ defined by the initial seeed
$({\bf x},B)$ together with the compatible Poisson structure  defined by the coefficient 
matrix $\Lambda$ with respect to the cluster ${\bf x}$ as the {\it Poisson cluster algebra}
defined by the {\it Poisson seed}  $({\bf x},B,\Lambda)$.

It is not obvious under which conditions  a Poisson seed $({\bf x},B,\Lambda)$ would yield a Poisson bracket $\{\cdot,\cdot\}_\Lambda$   on   $\mathfrak{F}$ such that $\{\AA,\AA\}_\Lambda\subset \AA$. We have, however, the following fact. 

\begin{proposition} 
Let   $({\bf x},B,\Lambda)$ be a Poisson seed and $\AA$ the corresponding cluster algebra. Then $\Lambda$ defines a Poisson algebra structure on the upper bound $\UU_{{\bf x},B}(\AA)$ and the upper cluster algebra $\UU(\AA)$.
\end{proposition}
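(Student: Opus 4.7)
The strategy is to show that each factor in the intersection defining $\UU_{{\bf x},B}(\AA)$ is itself closed under the extended bracket $\{\cdot,\cdot\}_\Lambda$ on $\FF$; the intersection of Poisson subalgebras of $\FF$ is then automatically one as well. Writing $A_j = \CC[x_1^{\pm 1},\ldots,x_{j-1}^{\pm 1}, x_j, y_j, x_{j+1}^{\pm 1},\ldots, x_m^{\pm 1}, x_{m+1},\ldots, x_n]$ so that $\UU_{{\bf x},B}(\AA) = \bigcap_{j=1}^m A_j$, it suffices by the Leibniz rule to check $\{A_j, A_j\}_\Lambda \subset A_j$ on the natural generating set $\{x_k^{\pm 1} : k \neq j,\ k \leq m\} \cup \{x_k : k > m\} \cup \{x_j, y_j\}$.

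Fix such a $j$. Pairs of generators not involving $y_j$ are handled by log-canonicality of $\Lambda$ on the initial cluster: $\{x_k, x_l\}_\Lambda = \lambda_{kl} x_k x_l \in A_j$, and the relation $\{f^{-1},g\}_\Lambda = -f^{-2}\{f,g\}_\Lambda$ handles inverses. Pairs $(y_j, x_k)$ with $k \neq j$ are handled by the other crucial input: since $\Lambda$ is compatible with $\AA$, it is log-canonical on the mutated cluster ${\bf x}_j = (x_1,\ldots,\hat x_j, y_j, \ldots, x_n)$, and therefore $\{y_j, x_k\}_\Lambda \in \CC \cdot y_j x_k \subset A_j$.

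The one remaining bracket, $\{x_j, y_j\}_\Lambda$, is the main obstacle, because $x_j$ and $y_j$ never occur in a common cluster and log-canonicality therefore does not apply directly. The plan is to compute it via the exchange relation $x_j y_j = P_j$. Writing $y_j = P_j x_j^{-1}$, using $\{x_j, x_j^{-1}\}_\Lambda = 0$ (which follows from $\{x_j, x_j \cdot x_j^{-1}\}_\Lambda = 0$ and Leibniz), and applying Leibniz to each monomial of $P_j$ with $\{x_j, x_k\}_\Lambda = \lambda_{jk} x_j x_k$, one factor of $x_j$ cancels and one obtains
\[
\{x_j, y_j\}_\Lambda = \Big(\sum_k b_{jk}^+ \lambda_{jk}\Big) \prod_l x_l^{b_{jl}^+} + \Big(\sum_k (-b_{jk}^-) \lambda_{jk}\Big) \prod_l x_l^{-b_{jl}^-}.
\]
Since the principal $m \times m$-submatrix of $B$ is skew-symmetrizable, $b_{jj} = 0$, and the right-hand side is a polynomial in the variables $x_l$ with $l \neq j$ (and in fact a $\CC$-linear combination of the two monomials making up $P_j$), hence it lies in $A_j$. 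This closes the verification that $A_j$, and therefore $\UU_{{\bf x},B}(\AA)$, is a Poisson subalgebra of $\FF$.

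For $\UU(\AA) = \bigcap_{({\bf x}',B')} \UU_{{\bf x}',B'}(\AA)$, the same argument applies to every seed in the mutation class: compatibility of $\Lambda$ supplies a log-canonical coefficient matrix $\Lambda'$ with respect to each cluster ${\bf x}'$, so the preceding three paragraphs applied to $({\bf x}', B', \Lambda')$ show each $\UU_{{\bf x}', B'}(\AA)$ is Poisson-closed, and the intersection inherits the property. The core technical point is thus the explicit computation of $\{x_j, y_j\}_\Lambda$ above; the rest is bookkeeping on generating sets and intersections.
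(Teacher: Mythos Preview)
Your proof is correct and follows essentially the same route as the paper's: show each factor $A_j$ in the intersection is a Poisson subalgebra by reducing the only nontrivial case to the bracket $\{x_j, y_j\}$ computed via $y_j = x_j^{-1}P_j$, then take the intersection. You are more explicit than the paper, in particular by invoking log-canonicality on the mutated cluster for $\{y_j, x_k\}$ with $k\neq j$ and by writing out the formula for $\{x_j, y_j\}$; the paper compresses all of this into the single observation $\{x_i, y_i\}_\Lambda = \{x_i, x_i^{-1}P_i\}_\Lambda \in \CC[x_1,\ldots,x_n]$.
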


\begin{proof}
Denote as above by $\{\cdot,\cdot\}_\Lambda$ the Poisson bracket on $\mathfrak{F}$ by $\Lambda$.
  Observe  that  the algebras  $\CC[x_1^{\pm 1},\ldots x_{i-1}^{\pm 1}, x_i,y_i, x_{i+1}^{\pm 1}, \ldots, x_n^{\pm 1}]$   are Poisson subalgebras of the Poisson algebra $\CC[x_1^{\pm 1},\ldots x_n^{\pm 1}]$  for each $1\le i\le m$, as $\{x_i,y_i\}_\Lambda=\{x_i,x_i^{-1}P_i\}_\Lambda\in \CC[x_1,\ldots, x_n]$. If $A$ is a Poisson algebra and $\{B_i\subset A:i\in I\}$ is a family of Poisson subalgebras, then $\bigcap_{i\in I} B_i$ is a Poisson algebra, as well. The assertion follows.
\end{proof}

 \subsection{Compatible Pairs and Their Mutation}
\label{se:Compatible Pairs and Mut}
Section \ref{se:Compatible Pairs and Mut} is dedicated to compatible pairs and their mutation. As we shall see below, compatible pairs yield important examples of Poisson brackets which are compatible with a given cluster algebra structure. Note that our  
 definition is slightly different from the original on in \cite{bz-qclust}. Let, as above, $m\le n$. 
Consider a pair consisting of a skew-symmetrizable $m\times n$-integer matrix
$B$ with rows labeled by the interval $[1,m]=\{1,\ldots, m\}$ and columns  labeled by   $[1,n]$
 together with a skew-symmetrizable $n\times n$-integer matrix  $\Lambda$  with rows and
columns labeled by $[1,n]$.   

\begin{definition}
\label{def:compa pair}
Let $B$ and $\Lambda$ be as above. We say that the pair $(B,\Lambda)$ is
compatible if the coefficients $d_{ij}$ of the $m\times n$-matrix $D=B\cdot \Lambda$ satisfy
$d_{ij}=d_i\delta_{ij}$
for some positive integers $d_i$ ($i\in [1,m]$).  
\end{definition}
This means that $D=B\cdot \Lambda$ is a $m\times n$ matrix where the only
non-zero entries are positive integers on the diagonal of the principal $m\times
m$-submatrix.

The following fact is obvious.

\begin{lemma}
\label{le:full rank}
Let $(B,\Lambda)$ be a compatible pair. Then $B\cdot \Lambda$ has full rank.
\end{lemma}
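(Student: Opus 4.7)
The plan is to read off the shape of $D = B\cdot\Lambda$ directly from Definition \ref{def:compa pair} and observe that this shape forces the maximum possible rank for an $m\times n$ matrix with $m\le n$.

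First, I would unpack the condition $d_{ij} = d_i\delta_{ij}$ for $i\in[1,m]$ and $j\in[1,n]$: for each row index $i$, the only nonzero entry in that row is the one in column $i$, and it equals the positive integer $d_i$. Consequently, $D$ has the block form
\begin{equation*}
D \;=\; \bigl[\,\mathrm{diag}(d_1,\ldots,d_m)\ \big|\ 0_{m\times(n-m)}\,\bigr].
\end{equation*}
Next, I would observe that the principal $m\times m$ submatrix $\mathrm{diag}(d_1,\ldots,d_m)$ has determinant $\prod_{i=1}^m d_i > 0$, hence has rank $m$. Since adding zero columns on the right does not change the rank, $\mathrm{rank}(D) = m = \min(m,n)$, which is exactly the statement that $D$ has full rank.

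There is no real obstacle here: the lemma is an immediate consequence of the definition, and the argument is essentially a one-liner once the shape of $D$ is made explicit. The only point worth emphasizing is that the positivity of the $d_i$ (as opposed to merely their nonvanishing collectively) is what guarantees that the diagonal block is invertible, so the conclusion would equally hold under the weaker hypothesis that each $d_i$ is a nonzero integer.
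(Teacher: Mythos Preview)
Your proof is correct and is precisely the obvious verification the paper has in mind; the paper does not give a separate argument beyond declaring the fact obvious, and your unpacking of the block form $D=[\mathrm{diag}(d_1,\ldots,d_m)\mid 0]$ with $d_i>0$ is exactly what is needed.
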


  Let  $(B,\Lambda)$  be a compatible pair and let $k\in [1,m]$. We define for
$\varepsilon\in \{+1,-1\}$ a $n\times n$ matrix $E_{k,\varepsilon}$ via 
 
 \begin{itemize}
  \item $(E_{k,\varepsilon})_{ij}=\delta_{ij}$ if  $j\ne k$,
  
\item   $(E_{k,\varepsilon})_{ij}= -1$ if   $i=j= k$,
 
\item  $(E_{k,\varepsilon})_{ij}= max(0,-\varepsilon b_{ki})$ if  $i\ne j= k$.

\end{itemize}

 Similarly, we define a $m\times m$ matrix $F_{k,\varepsilon}$ via 
 
  \begin{itemize}
  \item $(F_{k,\varepsilon})_{ij}=\delta_{ij}$   if  $i\ne k$,
  
\item   $(F_{k,\varepsilon})_{ij}= -1$ if   $i=j= k$,
 
\item  $(F_{k,\varepsilon})_{ij}= max(0,\varepsilon b_{jk})$ if  $i= k\ne j$.

\end{itemize}

  We define a new pair $(B_k,\Lambda_k)$ as
  \begin{equation}
  \label{eq:mutation matrix and Poisson}
   B_k=   F_{k,\varepsilon}  B   E_{k,\varepsilon}\ , \quad
\Lambda_k=E_{k,\varepsilon}^T \Lambda E_{k,\varepsilon}\ ,
   \end{equation}
  where $X^T$ denotes the transpose of $X$. We have the following fact.
  
  \begin{proposition}\cite[Prop. 3.4]{bz-qclust}
  \label{pr:comp under mutation}
  The pair  $(B_k,\Lambda_k)$ is compatible. Moreover, $\Lambda_k$ is
independent of the choice of the sign $\varepsilon$. 
  \end{proposition}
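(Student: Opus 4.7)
My plan is to reproduce Berenstein--Zelevinsky's argument by handling the two assertions separately, beginning with the $\varepsilon$-independence of $\Lambda_k$ because that observation streamlines the compatibility check.

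First I rewrite the mutation matrices as rank-one updates of the identity: $E_{k,\varepsilon} = I - w_\varepsilon e_k^T$ with $w_\varepsilon \in \CC^n$ defined by $(w_\varepsilon)_k = 2$ and $(w_\varepsilon)_i = -\max(0,-\varepsilon b_{ki})$ for $i \ne k$, and similarly $F_{k,\varepsilon} = I - e_k \tilde w_\varepsilon^T$ with $\tilde w_\varepsilon \in \CC^m$ defined from the $k$-th column of the principal part of $B$. A direct computation using $(w_\varepsilon)_k = 2 = (\tilde w_\varepsilon)_k$ shows $E_{k,\varepsilon}^2 = I = F_{k,\varepsilon}^2$, so both matrices are involutions.

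For the independence of $\Lambda_k$ from $\varepsilon$, the elementary identity $\max(0,-x) - \max(0,x) = -x$ gives $w_+ - w_- = -B^T e_k$, hence $E_{k,+} - E_{k,-} = -(B^T e_k) e_k^T$. Expanding $\Lambda_{k,\varepsilon} = E_{k,\varepsilon}^T\Lambda E_{k,\varepsilon}$ via this rank-one form --- the quadratic term vanishes since $w_\varepsilon^T \Lambda w_\varepsilon = 0$ by skew-symmetry --- and subtracting the two versions yields
$$\Lambda_{k,+} - \Lambda_{k,-} = \Lambda(B^T e_k) e_k^T + e_k (e_k^T B)\Lambda.$$
Compatibility of $(B,\Lambda)$ forces $e_k^T B\Lambda = d_k e_k^T$ and $\Lambda B^T e_k = -(B\Lambda)^T e_k = -d_k e_k$, so the two contributions cancel and $\Lambda_{k,+} = \Lambda_{k,-}$; call this common value $\Lambda_k$.

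For compatibility of $(B_k,\Lambda_k)$, I expand $B_k\Lambda_k = FBE\cdot E^T\Lambda E$ in rank-one form. Setting $q := Bw$, $\tilde p := B^T\tilde w$, $r := \Lambda w$, and $\beta := \tilde w^T Bw$, one obtains
$$B_k = B - qe_k^T - e_k\tilde p^T + \beta e_k e_k^T, \qquad \Lambda_k = \Lambda - re_k^T + e_k r^T.$$
Multiplying out and substituting $B\Lambda = [D\mid 0]$ (so that $Br = [D\mid 0]w$ and $e_k^T B\Lambda = d_k e_k^T$) collapses the pieces directly involving $B\Lambda$ and reduces $(B_k\Lambda_k)_{ij}$ to tractable expressions in the components of $q,\tilde p, r, \beta$ and of the $k$-th row or column of $B$ and $\Lambda$. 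One then checks entry-by-entry that all entries of $B_k\Lambda_k$ in columns beyond the $m$-th vanish and that the principal $m\times m$ block is diagonal with positive integer entries. Since $\Lambda_k$ does not depend on $\varepsilon$, I am free to choose whichever sign makes this verification cleanest.

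The main obstacle is the sign-bookkeeping in this last step: although each individual cancellation reduces to the identity $\max(0,x) + \max(0,-x) = |x|$ combined with $(B\Lambda)_{kj} = d_k\delta_{kj}$, simultaneously aligning all cancellations across the various index cases (depending on whether $i$ and $j$ equal $k$ or exceed $m$) requires careful tracking, which is why the argument in \cite{bz-qclust} is delegated to a separate lemma rather than displayed in line.
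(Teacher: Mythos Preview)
The paper does not supply its own proof of this proposition; it merely cites \cite[Prop.~3.4]{bz-qclust} and moves on, so there is nothing in the present paper to compare your argument against.

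That said, your outline is the standard Berenstein--Zelevinsky computation. The rank-one rewriting $E_{k,\varepsilon}=I-w_\varepsilon e_k^T$ is correct, and your derivation of the $\varepsilon$-independence of $\Lambda_k$ is sound: the key cancellation $\Lambda B^T e_k = -d_k e_k$ versus $e_k^T B\Lambda = d_k e_k^T$ is exactly what makes the two signs agree. (There is a harmless sign slip in the line ``$w_+-w_-=-B^Te_k$''; with your conventions one actually gets $w_+-w_-=B^Te_k$, but your next displayed formula for $E_{k,+}-E_{k,-}$ is correct regardless.) For the compatibility of $(B_k,\Lambda_k)$ you set up the expansion of $B_k\Lambda_k$ correctly but then defer the entry-by-entry verification. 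That is the substantive part of the computation, and leaving it as ``one then checks'' is an incomplete proof rather than a wrong one; since the present paper treats the result as a black-box citation, this level of detail is already more than the paper itself provides.
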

 
 The following fact is clear.

\begin{corollary}
Let $\AA$ be a cluster algebra given by an initial seed $({\bf x}, B)$ where $B$ is a $m\times n$-matrix. If $(B,\Lambda)$ is a compatible pair, then $\Lambda$ defines a compatible Poisson bracket on $\mathfrak{F}$ and $\UU(\AA)$.
\end{corollary}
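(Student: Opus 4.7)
The plan is to reduce the statement to verifying that $\{\cdot,\cdot\}_\Lambda$ is log-canonical on every cluster in the mutation equivalence class of $({\bf x},B)$. The extension from the Laurent polynomial ring $\CC[x_1^{\pm 1},\ldots,x_n^{\pm 1}]$ to $\mathfrak{F}$ is automatic via the Leibniz rule as recalled in Section \ref{se:poissonstructure}. Once log-canonicity holds on every cluster, the preceding proposition applied seed-by-seed gives a Poisson algebra structure on each $\UU_{{\bf x}',B'}(\AA)$ and hence on their intersection $\UU(\AA)$.

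To verify log-canonicity on every cluster, I would proceed by induction on mutation distance from $({\bf x},B)$, so the task reduces to a single mutation step. Let the mutation be at direction $k\in[1,m]$, producing the new variable $x_k'=x_k^{-1}(M^+ + M^-)$ where $M^+ = \prod_i x_i^{b_{ki}^+}$ and $M^- = \prod_i x_i^{-b_{ki}^-}$. Pairs from the new cluster not involving $x_k'$ are already log-canonical by hypothesis, so the only computation required is $\{x_k',x_j\}_\Lambda$ for $j\ne k$. Using the general identity $\{M,x_j\}_\Lambda = \bigl(\sum_i a_i \lambda_{ij}\bigr)Mx_j$ for a monomial $M=\prod_i x_i^{a_i}$, together with $\{x_k^{-1},x_j\}_\Lambda=-\lambda_{kj}x_k^{-1}x_j$, the problem reduces to comparing the $\Lambda$-weights of $M^+$ and $M^-$ with respect to $x_j$.

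The crucial point is the identity
\begin{equation*}
\sum_i b_{ki}^+ \lambda_{ij} - \sum_i (-b_{ki}^-)\lambda_{ij} = \sum_i b_{ki}\lambda_{ij} = (B\Lambda)_{kj},
\end{equation*}
which vanishes for $j\ne k$ by the compatibility condition $B\Lambda=D$ with $D$ diagonal on its principal $m\times m$-block. Hence $M^+$ and $M^-$ carry the same $\Lambda$-weight relative to $x_j$, so $\{M^++M^-,x_j\}_\Lambda$ is proportional to $(M^++M^-)x_j$, and therefore $\{x_k',x_j\}_\Lambda$ is proportional to $x_k'x_j$, yielding log-canonicity.

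There is no genuine obstacle to iterating the argument: Proposition \ref{pr:comp under mutation} handles the remaining bookkeeping, asserting that the resulting coefficient matrix on the new cluster is precisely $\Lambda_k = E_{k,\varepsilon}^T \Lambda E_{k,\varepsilon}$ (independently of the sign $\varepsilon$) and that $(B_k,\Lambda_k)$ is again a compatible pair. This closes the induction and yields the corollary.
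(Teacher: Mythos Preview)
Your argument is correct and is precisely the unpacking of what the paper leaves implicit: the paper states the corollary as ``clear'' immediately after Proposition~\ref{pr:comp under mutation}, relying on the fact that mutation of a compatible pair yields a compatible pair (with new matrix $\Lambda_k=E_{k,\varepsilon}^T\Lambda E_{k,\varepsilon}$), together with the preceding proposition on upper bounds. Your explicit verification that $(B\Lambda)_{kj}=0$ for $j\ne k$ forces the two monomials in the exchange polynomial to have equal $\Lambda$-weight against $x_j$---hence log-canonicity after a single mutation---is exactly the computation underlying this, so the approaches coincide.
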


\begin{example}
 If $m=n$ (i.e.~there are no coefficients/frozen variables) and $B$ has full rank, then $(B, \mu B^{-1})$ is a compatible pair for all $\mu\in \ZZ_{> 0}$ such that $\mu B^{-1}$ is an integer matrix.
\end{example}
\begin{remark}
Another important example is the following.  Recall that double Bruhat cells in complex semisimple connected and simply connected algebraic groups have a natural structure of an upper cluster algebra (see \cite{BFZ}). Berenstein and Zelevinsky showed that the standard Poisson structure is given by compatible pairs relative to this upper cluster algebra structure (see \cite[Section 8]{bz-qclust}), see also Sections \ref{se:dbc as P strata} and  \ref{se:q-groups}.  
\end{remark}

    \subsection{Quantum Cluster Algebras}
    \label{se: q-cluster algebra spec}
In this section we recall the definition of a quantum  algebra, introduced by
Berenstein and Zelevinsky in \cite{bz-qclust}. 
\subsection{Quantum Cluster Algebras}
  Recall the definition of a compatible pair from Section \ref{se:Compatible
Pairs and Mut}. 

We define, for each skew-symmetric $n\times n$-matrix $\Lambda$,
the skew-polynomial ring $\CC_\Lambda^t[x_1,\ldots, x_n]$ to be the $\CC[t^{\pm
1}]$-algebra generated by $x_1,\ldots, x_n$ subject to the relations
$$x_ix_j=t^{\lambda_{ij}} x_jx_i\ .$$

Analogously, the quantum torus $H_\Lambda^t$ is defined as the localization of
$\CC_\Lambda^t[x_1,\ldots, x_n]$ at the monoid generated by ${x_1,\ldots, x_n}$,
which is an Ore set. The quantum torus is clearly contained in the skew-field of
fractions $\mathcal{F}_\Lambda$ of $\CC_\Lambda^t[x_1,\ldots, x_n]$, and the
Laurent monomials define a lattice $L\subset  H_\Lambda^t\subset
\mathcal{F}_\Lambda$   isomorphic to $\ZZ^n$. Denote by $x^{1e_1,\ldots,
e_n}$ the monomial $x_1^{e_1}\ldots x_n^{e_n}$.

We are now ready to define the notion of a toric frame in order to define the
quantum cluster algebra.
 
 \begin{definition} 
 A toric frame in $ \mathfrak F$ is a mapping $M:\ZZ^m\to  \mathfrak F-\{0\}$ of
the form
 $$M(c)=\phi(X^{\eta(c)})\ ,$$
 where $\phi$ is a  $\QQ(\frac{1}{2})$-algebra automorphism of $\mathfrak F$ and
$\eta: \ZZ^m\to L$ an isomorphism of lattices.
 \end{definition}
  
  Since a toric frame $M$ is determined uniquely by the images of the standard
basis vectors $\phi(X^{\eta(e_1)})$,\ldots, $\phi(X^{\eta(e_n)})$ of $\ZZ^m$, we
can associate to each toric frame a skew commutative $m\times m$-integer matrix
$\Lambda_M$. 
  We can now  define the quantized version of a seed.
  
  \begin{definition}\cite[Definition 4.5]{bz-qclust}
  A quantum seed is a pair $(M,B)$ where
  \begin{itemize}
  \item $M$ is a toric frame in $\mathfrak F$.
  \item $B$ is a   $m\times n$-integer matrix  with rows labeled by $[1,m]$ and
columns  labeled by an $n$-element subset  $ {\bf ex}\subset [1,m]$.  
  \item The pair $(B,\Lambda_M)$ is compatible.
  \end{itemize}
  \end{definition}
  Now we define the seed mutation in direction of an exchangeable index $k\in 
{\bf ex}$. For each $\varepsilon\in \{ 1,-1\}$ we define a mapping $M_k:
\ZZ^m\to  \mathfrak F$ via
  
  $$M_k(c)=\sum_{p=0}^{c_k} \binom{c_k}{p}_{q^{d_k}{2}} M(E_\varepsilon c
+\varepsilon p b^k)\ , \quad M_k(-c)=M_k(c)^{-1}\ ,$$
  where we use the well-known $q$-binomial coefficients(see e.g.~\cite[Equation
4.11]{bz-qclust}), and the matrix $E_{k,\varepsilon}$ defined in  Section \ref{se:Compatible Pairs and Mut}.
 Define $B_k$ to be obtained from $B$ by the standard matrix mutation in
direction $k$, as in Section \ref{se:cluster algebras def}. One obtains the
following fact.
  
  \begin{proposition}
\cite[Prop. 4.7]{bz-qclust}
(a) The map $M_k$ is a toric frame, independent of the choice of sign
$\varepsilon$. 

\noindent(b) The pair $(B_k,\Lambda_{M_k})$ is a quantum seed. 
  \end{proposition}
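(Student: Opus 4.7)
The plan is to verify (a) and (b) by first examining how $M_k$ acts on the standard basis vectors $e_1,\dots,e_n$, then extending by multiplicativity. Write $\Lambda'=E_{k,\varepsilon}^T\Lambda_M E_{k,\varepsilon}$. My goal is to show (i) the elements $y_i:=M_k(e_i)$ satisfy $y_iy_j=t^{\lambda'_{ij}}y_jy_i$, (ii) they generate a copy of the quantum torus associated to $\Lambda'$ inside $\mathfrak F$, and (iii) the two prescriptions $\varepsilon=+1$ and $\varepsilon=-1$ define the same map. Once (i)--(iii) hold, the universal property of the quantum torus produces an embedding that, composed with $\phi$, supplies an automorphism $\phi_k$ realizing $M_k$ as a toric frame with $\eta_k(c)=\eta(E_{k,\varepsilon}c)+\text{correction}$.

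For step (i), I first compute $M_k(e_i)$ directly from the definition. For $i\ne k$, the column $E_{k,\varepsilon}e_i$ equals $e_i$, so the sum collapses to $M(e_i)$. For $i=k$ only $p=0,1$ contribute, yielding $M_k(e_k)=M(E_{k,\varepsilon}e_k)+\varepsilon\,M(E_{k,\varepsilon}e_k+\varepsilon b^k)$ up to the binomial constant. The $q$-commutation relations among the $y_i$ then follow from the multiplicativity of $M$ (since $M$ is a toric frame) and from matching the exponent vectors $E_{k,\varepsilon}^T\Lambda_M E_{k,\varepsilon}e_j$ term by term. The one nontrivial case is computing $y_ky_j$ for $j\ne k$: here the two monomials appearing in $y_k$ must both skew-commute with $y_j$ with the same coefficient, and this equality is precisely the statement that $(b^k)^T\Lambda_M e_j=0$ for $j\in[1,n]\setminus\{k\}$, i.e.\ the $k$-th row of $B\Lambda_M$ is $d_ke_k$---which is the compatibility hypothesis on $(B,\Lambda_M)$.

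Step (iii), sign independence, is the main obstacle, and it again depends crucially on compatibility of $(B,\Lambda_M)$. One compares $E_{k,+}c$ with $E_{k,-}c$: they differ by a multiple of $b^k$ whose coefficient is a nonnegative-integer-valued linear function of $c$, say $\alpha(c)b^k$. Thus for each fixed $c$ one must show
\begin{equation*}
\sum_{p=0}^{c_k}\binom{c_k}{p}_{q^{d_k/2}}M(E_{k,+}c+pb^k)
=\sum_{p=0}^{c_k}\binom{c_k}{p}_{q^{d_k/2}}M(E_{k,-}c-pb^k).
\end{equation*}
Since all monomials $M(E_{k,-}c+jb^k)$ pairwise $q$-commute (they lie in the abelian sublattice spanned by $b^k$ plus a fixed shift, and the $q$-commutation of these is controlled by $(b^k)^T\Lambda_M b^k$), the quantum binomial theorem applies: both sides equal $(M(E_{k,-}c)+M(E_{k,-}c+b^k))^{c_k}$ after a suitable normalization. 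The verification reduces to the identity $\binom{c_k}{p}_q+\binom{c_k}{p-1}_q q^{c_k-p+1}=\binom{c_k+1}{p}_q$, i.e.\ the standard $q$-Pascal rule, combined with compatibility to clear the cross-terms.

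For (b), compatibility of $(B_k,\Lambda_{M_k})$ is a direct matrix computation once the formulas (\ref{eq:mutation matrix and Poisson}) are in hand. I would write
\begin{equation*}
B_k\Lambda_{M_k}=F_{k,\varepsilon}B E_{k,\varepsilon}\cdot E_{k,\varepsilon}^T\Lambda_M E_{k,\varepsilon},
\end{equation*}
observe that the product $E_{k,\varepsilon}E_{k,\varepsilon}^T$ differs from the identity only by a rank-one correction supported in the $k$-th row/column, and use that $B\Lambda_M=[D\mid 0]$ with $D$ diagonal in the exchangeable block to kill this correction. The result is $B_k\Lambda_{M_k}=F_{k,\varepsilon}[D\mid 0]E_{k,\varepsilon}$, and the explicit form of $F_{k,\varepsilon}$ and $E_{k,\varepsilon}$ shows this again has the form $[D'\mid 0]$ with positive diagonal entries $d_i'=d_i$, giving compatibility. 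The conceptual obstruction throughout is the sign-independence argument; the remaining steps are bookkeeping once the compatibility of $(B,\Lambda_M)$ is used to convert sums of $M$-values into products via the $q$-binomial theorem.
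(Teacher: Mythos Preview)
The paper does not give its own proof of this proposition; it simply cites \cite[Prop.~4.7]{bz-qclust} and moves on. So there is no ``paper's proof'' to compare against---your outline is essentially a sketch of the Berenstein--Zelevinsky argument itself, and in broad strokes it is correct: compute $M_k(e_i)$ on basis vectors, use compatibility of $(B,\Lambda_M)$ to obtain the quasi-commutation relations among the new generators, and invoke a $q$-binomial identity for sign-independence.

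One genuine gap, however, is in your treatment of part~(b). Once part~(a) is established, it yields $\Lambda_{M_k}=E_{k,\varepsilon}^T\Lambda_M E_{k,\varepsilon}=\Lambda_k$, so the statement that $(B_k,\Lambda_{M_k})$ is compatible is \emph{exactly} Proposition~\ref{pr:comp under mutation}, already recorded in the paper (and proved as \cite[Prop.~3.4]{bz-qclust}). Your attempt to re-derive it by writing $B_k\Lambda_k=F\,B\,(E E^T)\,\Lambda\,E$ and claiming that $E_{k,\varepsilon}E_{k,\varepsilon}^T$ differs from the identity by a rank-one correction is not right: $E_{k,\varepsilon}$ is an involution ($E_{k,\varepsilon}^2=I$), but it is not orthogonal, and $E_{k,\varepsilon}E_{k,\varepsilon}^T-I$ is a rank-two perturbation whose interaction with $B$ and $\Lambda$ does not cancel in the way you suggest. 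The clean route is simply to cite Proposition~\ref{pr:comp under mutation}; if you want to reprove it, the Berenstein--Zelevinsky argument proceeds column-by-column on $\tilde B_k^T\Lambda_k$ rather than through $E E^T$.
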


Now, given an {\it initial quantum seed} $(B, \Lambda_M) $ denote, in a slight
abuse of notation, by $X_1=M(e_1),\ldots, X_r=M(e_r)$, which we refer to as the
{\it cluster variables} associated to the quantum seed $(M,B)$. Here our
nomenclature differs slightly from \cite{bz-qclust}, since there one considers the
coefficients not to be cluster variables. This is, however, not useful for our
purposes. We now define the seed mutation
$$X_k'=M(-e_k+\sum_{b_{ik}>0} b_{ik}e_i)+ M(-e_k-\sum_{b_{ik}<0} b_{ik}e_i)\ .$$

We obtain that $X_k'=M_k(e_k)$ (see \cite[Prop. 4.9]{bz-qclust}).  
We say that two quantum seeds $(M,B)$ and $(M',B')$ are mutation-equivalent if
they can be obtained from one another by a sequence of mutations. Since
mutations are involutive (see \cite[Prop 4.10]{bz-qclust}), the quantum seeds in
$\mathfrak{F}$ can be grouped in equivalence classes, defined by the relation of
mutation equivalence. The quantum cluster algebra generated by a seed 
$(M,B)\subset \mathfrak F$ is the $\CC[t^{\pm 1}]$-subalgebra generated by
the  cluster variables associated to the seeds in an equivalence class.

\subsection{Toric Actions}
\label{se:toric structure}
We recall the definitions and properties of local and global toric actions
from Gekhtman, Shapiro and Vainsthein \cite{GSV} (see also \cite{GSV B}) where they are introduced in the
context of cluster manifolds. As  discussed in \cite{GSV}, the cluster manifold  associated to a cluster algebra $\AA$ is not necessarily equal to the spectrum of maximal ideals of $\AA$, even when $\AA$ is Noetherian. For example the corresponding variety may have singularities, and hence may not admit a manifold structure.
The main notions, however, carry over into our context.
 
 Let $X$ be an affine  variety such that $\mathfrak{A}=\CC[X]$  is a cluster
algebra or upper cluster algebra. Let ${\bf x}=(x_1,\ldots,x_n)$ be a cluster.  Following \cite[Section
2.3]{GSV} we define for each element
${\bf w}=(w_1,w_2,\ldots, w_n)\in \ZZ^n$ a {\it local toric action} of $\CC^{\ast}$ on
$\CC[x_1,\ldots, x_n]$
via maps $ \psi_{{\bf x},\alpha} : (x_1, \ldots, x_n) \mapsto (\alpha^{w_{ 1}}
x_1, \ldots, \alpha^{w_{n}} x_n)$ for all $\alpha \in \CC^{\ast}$. 
Assume now that we have chosen integer weights ${\bf w}_{\bf x}=(w_1,w_2,\ldots, w_n)$
for each cluster ${\bf x}$.  
The local
toric actions for two clusters are compatible if   the following diagram
commutes for any two clusters ${\bf x} = (x_1,  
  \ldots, x_n)$
and ${\bf y} = (y_1,\ldots, y_n)$, connected by a sequence of
mutations $T$:

$$\begin{xymatrix}{
\CC[{\bf x}]\ar[d]^{\psi_{{\bf x},\alpha}}\ar[r]^T&\CC[{\bf y}]\ar[d]^{\psi_{{\bf
y},\alpha}}\\
\CC[{\bf x}]\ar[r]^T& \CC[{\bf y}]
  }\end{xymatrix}\ .
   $$
 
Compatible local toric actions define a {\it global toric action} on the cluster
algebra and a {\it toric flow} on $X$. We have the following fact.

\begin{lemma}
  \cite[Lemma 2.3]{GSV} 
\label{le:compatible toric} Let $B$ denote the exchange matrix of the
  cluster algebra at the cluster ${\bf x}$. The local toric action  at ${\bf x}$
  defined by ${\bf w}\in \ZZ^n$  can be extended to a global toric action if and only
if $B
  \cdot { {\bf w}} = 0$. Moreover, if such an extension exists, it is unique.
\end{lemma}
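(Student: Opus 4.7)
The plan is to reduce the commuting-diagram condition to an algebraic identity on the exchange relation $x_kx_k'=P_k$, extract the linear condition $B\cdot{\bf w}=0$ from that identity, and then propagate the weights along mutations by induction.

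For necessity, fix an exchangeable index $k\in[1,m]$. If the local action at ${\bf x}$ extends to a local action at the mutated cluster ${\bf x}_k$, then $\psi_{{\bf x},\alpha}(x_k')$ must be a scalar multiple of $x_k'$. Applying $\psi_{{\bf x},\alpha}$ to both sides of $x_kx_k'=P_k=\prod_ix_i^{b_{ki}^+}+\prod_ix_i^{-b_{ki}^-}$ forces $\psi_{{\bf x},\alpha}(P_k)$ to be a scalar multiple of $P_k$; since the two monomials of $P_k$ scale independently by $\alpha^{\sum_ib_{ki}^+w_i}$ and $\alpha^{-\sum_ib_{ki}^-w_i}$, these two exponents must agree, i.e.\ $\sum_ib_{ki}w_i=0$. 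Letting $k$ range over $[1,m]$ yields $B\cdot{\bf w}=0$.

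For sufficiency and uniqueness, assume $B\cdot{\bf w}=0$ and define the weight vector on ${\bf x}_k$ by $w_i'=w_i$ for $i\neq k$ and $w_k'=\sum_ib_{ki}^+w_i-w_k$. Uniqueness at this step is immediate because $\F=\CC({\bf x})=\CC({\bf x}_k)$, so the algebra automorphism of $\F$ underlying $\psi_{{\bf x},\alpha}$ already determines $\psi_{{\bf x}_k,\alpha}$ and hence ${\bf w}'$. The main technical step is to verify that the mutated pair $(B_k,{\bf w}')$ again satisfies $B_k\cdot{\bf w}'=0$, which is what allows one to iterate the construction. The $k$-th row of $B_k$ is $-b_{k\bullet}$, so dotting it with ${\bf w}'$ simplifies to $-(B\cdot{\bf w})_k=0$ using $b_{kk}=0$. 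For rows $i\neq k$ one substitutes the matrix-mutation formula $b_{ij}'=b_{ij}+\tfrac12(|b_{ik}|b_{kj}+b_{ik}|b_{kj}|)$ and splits into the cases $b_{ik}\geq 0$ and $b_{ik}<0$; in the first case the bracketed coefficient of $w_j$ collapses termwise to zero, while in the second it collapses to $-b_{ik}b_{kj}$, and the resulting sum vanishes because $(B\cdot{\bf w})_i=(B\cdot{\bf w})_k=0$. An induction on the length of a mutation path then propagates the weight assignment uniquely to every cluster in the equivalence class, proving simultaneously the existence and uniqueness of the global extension.

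The hard part is exactly this last case analysis: because of the $\max$/$\min$ functions in the mutation rule, $(B_k\cdot{\bf w}')_i$ initially looks asymmetric in the signs of $b_{ik}$ and $b_{kj}$, and one has to unpack both cases carefully before the two row-sum identities coming from $B\cdot{\bf w}=0$ align to cancel the leftover terms.
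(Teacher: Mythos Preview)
Your argument is correct and is essentially the standard proof from \cite{GSV}; the present paper does not supply its own proof but merely cites that reference, so there is nothing further to compare. One small expository point: your phrase ``the bracketed coefficient of $w_j$ collapses termwise to zero'' in the case $b_{ik}\ge 0$ is slightly misleading---the mutation correction $b_{ik}b_{kj}^+$ does not vanish termwise but rather cancels, after summing over $j$, against the contribution $-b_{ik}\sum_\ell b_{k\ell}^+w_\ell$ coming from $w_k'$; the computation is nonetheless right.
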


We shall now discuss how to obtain all Poisson structures compatible with a
cluster algebra $\mathfrak{A}$, given a   Poisson  seed $({\bf x},B,\Lambda)$ where  $B$ is an  $m\times
 n$-matrix. Denote  $k=n-m$. Let $C$ be an integer $n
\times k$ matrix. We
define an action of the torus $(\CC^\ast)^k$ on $\CC[x_1,\ldots, x_n]$ where ${\bf d}=
(d_1 \ldots, d_k) \in (\CC^{\ast})^k$ acts on $x_i$, $1\le i\le n$, as
\begin{equation}
\label{eq:toric action in class}{\bf d}\cdot_C x_i = x_i \prod_{j = 1}^m d_j^{c_{ij}} . \end{equation}
The local toric action extends to a global toric action of $(\CC^{\ast})^k$ on
${\bf x}$ if and only if $B \cdot C = 0$ by Lemma \ref{le:compatible toric}.
Notice that every
skew-symmetric $k \times k$-matrix $V$ defines a Poisson bracket on.
$(\CC^{\ast})^k$ with $\{x_i, x_j \}_V = v_{ij} x_i x_j$. One
obtains the following result.

\begin{proposition}
  \label{prop:class POisson} \cite[Proposition 2.2]{GSSV}  Let $\UU(\AA)$ be the Poisson upper cluster algebra defined by $({\bf x},B,\Lambda)$, and denote by $\{\cdot,\cdot\}_\Lambda$ the Poisson bracket.
  Let $\{\cdot,\cdot\}'$ be another compatible Poisson structure and let $\{\cdot,\cdot\}'_\lambda$ be the bracket defined by $\{a,b\}'_\lambda=\lambda \{a,b\}'$. Then there exists  a $n \times k$-integer matrix $C$
defining a
  global toric action,  a   skew-symmetric $k \times k $ matrix $V$ and $\lambda\in \CC$ such that
  the  action of Equation \ref{eq:toric action in class} extends to a homomorphism of Poisson algebras
 $$((\CC^{\ast})^m, \{\cdot, \cdot\}_V) \times (\UU(\AA),
     \{\cdot, \cdot\}_\Lambda) \longrightarrow (\UU(\AA),\{\cdot, \cdot\}_\lambda' )\ .$$   
\end{proposition}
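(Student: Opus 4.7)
The plan is to reduce the claim to a matrix identity on the initial cluster and then produce $C$ and $V$ by a kernel argument inside $\ker B$. Let $\Lambda'$ be the coefficient matrix of $\{\cdot,\cdot\}'$ with respect to the initial cluster ${\bf x}$. The desired Poisson homomorphism statement is equivalent, at the level of coefficient matrices, to the identity
$$\lambda\Lambda' = \Lambda + C V C^T,$$
subject to $BC = 0$. By Lemma \ref{le:compatible toric}, the condition $BC = 0$ is precisely what promotes the local toric action of $(\CC^*)^k$ defined by $C$ in \ref{eq:toric action in class} to a global toric action on $\UU(\AA)$, and once $\lambda$, $C$ and $V$ are found, verifying the homomorphism property reduces to a direct calculation on the cluster generators.

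The central step is to show that any compatible $\Lambda'$ satisfies $B\Lambda' = (D' \mid 0)$ with $D'$ a positive-diagonal $m\times m$ matrix that is moreover a scalar multiple of the diagonal matrix $D$ produced by the compatible pair $(B,\Lambda)$. Compatibility of $\{\cdot,\cdot\}'$ on every cluster gives the mutation rule $\Lambda'_k = E_{k,\varepsilon}^T \Lambda' E_{k,\varepsilon}$ exactly as in \ref{eq:mutation matrix and Poisson}, and arguing as in Proposition \ref{pr:comp under mutation} one sees that a single mutation at $k$ already forces the off-diagonal entries of the principal $m\times m$ block of $B\Lambda'$ and all entries of the frozen-column block to vanish. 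The scalar proportionality $D' = \lambda^{-1}D$ is then deduced by combining mutations at different mutable indices and exploiting the skew-symmetrizability of the principal submatrix of $B$: the ratios $d'_i/d_i$ are preserved by mutation at $i$, and comparing across all mutations forces these ratios to coincide.

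Setting $M := \lambda\Lambda' - \Lambda$, we then have $BM = 0$, so every column of $M$ lies in $\ker B$, which has rank $k = n-m$ by Lemma \ref{le:full rank}. Choosing an integer $n \times k$ matrix $C$ whose columns span $\ker B$ (absorbing any integrality denominators into $\lambda$ and $V$), we obtain a unique skew-symmetric $V$ with $M = CVC^T$. The Poisson homomorphism property of the map $((\CC^*)^k, \{\cdot,\cdot\}_V) \times (\UU(\AA), \{\cdot,\cdot\}_\Lambda) \to (\UU(\AA), \lambda\{\cdot,\cdot\}')$ then follows on generators: under $d \in (\CC^*)^k$ the element $x_i$ becomes $x_i \prod_s d_s^{c_{is}}$, so the log-canonical bracket on the image acquires precisely the tensor $CVC^T$ added to the original $\Lambda$, recovering $\lambda\Lambda'$. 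The main obstacle is the middle paragraph, and specifically the global proportionality $D' = \lambda^{-1}D$: while the zero pattern of $B\Lambda'$ is cleanly forced by a single mutation, the scalar $\lambda$ must be uniform across all mutable indices, and establishing this uniformity requires chaining the mutation rule across connected components of the exchange graph rather than arguing locally at one index.
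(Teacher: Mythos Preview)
The paper does not supply its own proof of this proposition; it is quoted directly from \cite{GSSV}. Your strategy---reduce to the matrix identity $\lambda\Lambda' = \Lambda + CVC^T$, deduce $B\Lambda' = (D'\mid 0)$ from log-canonicity under a single mutation, then factor $\lambda\Lambda'-\Lambda$ through $\ker B$---is the natural one and is essentially what the proof in \cite{GSV,GSSV} amounts to.

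Two comments on the step you flag as the obstacle. First, the clean way to obtain $D' = \lambda^{-1}D$ is not by chaining mutations but by observing that $B\Lambda' B^T = D'(B')^T$ is skew-symmetric (here $B'$ is the principal $m\times m$ block of $B$), so $B'D'$ is skew-symmetric; thus $D'$ is a right skew-symmetrizer of $B'$, as is $D$, and these coincide up to a scalar precisely when the quiver underlying $B'$ is connected. (You wrote ``exchange graph'' but mean the exchange \emph{quiver}: the exchange graph of seeds is always connected by construction.) Second, this connectedness hypothesis is genuinely needed for the statement itself, not just your argument. If $B'$ is block-decomposable, or if $B\Lambda' = 0$ (which does occur---take $\Lambda' = C V_0 C^T$ for any skew-symmetric $V_0$), then no scalar $\lambda$ satisfies $\lambda D' = D$, and the identity $\lambda\Lambda' = \Lambda + CVC^T$ is impossible since applying $B$ to both sides gives $0 \ne D$. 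So the proposition as stated requires the implicit hypotheses (connected mutable quiver, $B\Lambda' \ne 0$) that hold automatically in the Grassmannian setting of \cite{GSSV}. Under those hypotheses your proof is correct; without them the gap you identified is a gap in the statement rather than in your reasoning.

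A minor point: you cannot ``absorb integrality denominators into $\lambda$'', since $\lambda$ is pinned down by $D' = \lambda^{-1}D$. The denominators go entirely into $V$, which the proposition does not require to be integral.
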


\subsection{Toric Actions on Subalgebras}
\label{se:torus on subs}
Let $B$ be an exchange matrix as above and let $T=ker(B)$. Let ${\bf i}=\{x_{i_1},\ldots, x_{i_k}\}$ be a $k$-element subset of ${\bf x}$ and let for $\ell=n-k$ be $\{x_{j_1},\ldots, x_{j_\ell}\}={\bf x}-{\bf i}$. Denote by $\ZZ^{\bf i}$ the sublattice of $\ZZ^n$ spanned by $e_{i_1},\ldots, e_{i_k}$ and by $T_{\bf i}$ the projection  of $T$ on the quotient $\ZZ^n/\ZZ^{\bf i}$. The global toric actions act on $\CC[x_{j_1},\ldots, x_{j_\ell}]$ as follows: Let $t\in T$ and $\alpha \in \CC^\ast$ then
$$t(\alpha)x_{j_h}=t_{\bf i}(\alpha)  x_{j_h}\ , $$
where  $t_{\bf i}$ denotes the image of $t$ under the natural projection of $T$ onto $T_{\bf i}$. Notice that if $B$ is generic, then $rank(T_{\bf i})=max(rank(T), n-|{\bf i}|)$.

 \section{Torus Invariant  Prime Ideals in Quantum Cluster Algebras}
  \label{se:COS}
  In  this section we discuss properties of torus invariant prime ideals (TIPs) in quantum cluster algebras.
  \subsection{Supertoric Clusters}
  Assume that $\AA_q$ is a quantum cluster algebra and that all prime ideals in $\AA$ are completely prime; i.e., if $\II$ is a prime ideal in $\AA_q$, then   $a\cdot b\in \II$ implies $a\in\II$ or $b\in \II$. 
\begin{proposition} 
\label{pr:TPP super toric q}
Let $\AA_q$ be a quantum cluster algebra $({\bf x},B,\Lambda)$ a quantum seed,  and $\II$ a  non-zero two-sided torus invariant prime  ideal. Then, $\II$ contains a   cluster variable $x_i\in {\bf x} $. 
\end{proposition}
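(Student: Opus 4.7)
The plan is to combine the quantum Laurent phenomenon with a torus weight decomposition so as to produce a single monomial of $\II$, and then to invoke complete primeness to single out one cluster variable.

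First, I would pick any nonzero $f\in\II$. By the quantum Laurent phenomenon, $f$ lies inside the quantum torus $H_\Lambda^t$ and admits a unique finite expansion $f=\sum_{\mathbf{a}} c_{\mathbf{a}}\,x^{\mathbf{a}}$ as a Laurent polynomial in the cluster variables $x_1,\ldots,x_n$. Choosing integers $N_1,\ldots,N_n$ large enough so that every exponent $\mathbf{a}+(N_1,\ldots,N_n)$ appearing in the support of $f$ has nonnegative entries, the monomial $M=x_1^{N_1}\cdots x_n^{N_n}$ lies in $\AA_q$ and the product $M\cdot f$ is a nonzero element of $\CC_\Lambda^t[x_1,\ldots,x_n]\cap\II$.

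Next, I would decompose $M\cdot f$ into torus weight components. Each monomial $x^{\mathbf{a}}$ is a weight vector for the torus action, with weight equal to the $\ZZ$-linear combination $\sum_i a_i\chi_i$ of the characters $\chi_i$ by which the torus scales the cluster variables, and the supertoric hypothesis underlying this subsection should be precisely the assertion that these characters separate distinct monomials on $\ZZ^n$. Since $\II$ is torus invariant, every weight component of $M\cdot f$ lies in $\II$, and the separation of weights forces each such component to equal a scalar multiple of a single monomial; hence there exists a nonzero $(a_1,\ldots,a_n)\in\ZZ_{\geq 0}^n$ with $x_1^{a_1}\cdots x_n^{a_n}\in\II$.

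Finally, complete primeness of $\II$ implies by an easy induction on $\sum_i a_i$ that some individual $x_i$ with $a_i>0$ belongs to $\II$, which is the desired conclusion.

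The principal obstacle is the middle step: verifying that the torus does separate all monomials in $x_1,\ldots,x_n$. Without the supertoric property this can fail, since weight components are only polynomials in the monomials $x^{\mathbf{a}}$ with $\mathbf{a}$ ranging over a coset of the sublattice $\mathrm{image}(B^T)\subset\ZZ^n$, and the extraction of a pure monomial from $\II$ would then require additional input, most likely a mutation step replacing the current cluster by one in which the appropriate cluster variable is itself torus invariant.
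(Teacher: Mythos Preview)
Your first and last steps match the paper exactly: clear denominators via the Laurent phenomenon to land in $\CC_\Lambda^t[x_1,\ldots,x_n]\cap\II$, and at the end use complete primeness to pass from a monomial to a single $x_i$. The gap is precisely where you flagged it, but it is more serious than you suggest, and the fix is not a mutation step. The proposition carries no supertoric hypothesis (that notion is defined only after this result and means something different: $\mathrm{rk}(T_{[1,k]}+\mathrm{Im}\,\Lambda_{[1,k]})=k$ for all $k$, not that torus weights separate monomials). Torus weights alone essentially never separate monomials: two monomials $x^{\mathbf a},x^{\mathbf{a}'}$ have the same weight iff $\mathbf a-\mathbf a'\in T^\top$, and $T^\top$ is nonzero whenever $B\neq 0$. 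So after your torus decomposition you are left with a genuine polynomial, not a monomial, and you have no further tool.

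The missing idea is to exploit the compatible pair condition $B\Lambda=D$ with $D$ full rank, which forces $\mathrm{rad}(\Lambda)\cap T^\top=\{0\}$. Take a torus-invariant element $f\in\II\cap\CC_\Lambda^t[x_1,\ldots,x_n]$ with the fewest monomial terms. Any two exponents $\mathbf w,\mathbf w'$ in its support differ by an element of $T^\top$, hence by something outside $\mathrm{rad}(\Lambda)$; so there exists $i$ with $x_i x^{\mathbf w}=q^{c}x^{\mathbf w}x_i$ and $x_i x^{\mathbf w'}=q^{d}x^{\mathbf w'}x_i$ for $c\neq d$. Then $x_i f-q^{c}f x_i$ lies in the two-sided ideal $\II$, is nonzero, and has strictly fewer terms than $f$, a contradiction. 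Thus $f$ is a monomial. In short, the torus cuts the support down to a $T^\top$-coset, and the $q$-commutation with cluster variables (via $\Lambda$) finishes the job; it is the interplay of $T$ and $\Lambda$ through the compatibility condition, not the torus alone, that drives the argument.
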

 \begin{proof}
 Denote by $\II_{\bf x}$ the intersection of $\II$ with $\CC_\Lambda[x_1,\ldots, x_n]$. Notice first that $\II_{\bf x}\ne 0$. Indeed, let $0\ne f\in \II$. We can express $f$ as a Laurent polynomial in the variables $x_1,\ldots, x_n$; i.e., $f=x_{1}^{-c_1}\ldots x_n^{-c_n} g$ where $c_1,\ldots, c_n\in \ZZ_{\ge 0}$ and $0\ne g\in\CC[x_1,\ldots, x_n]$. Clearly, $g= x_{1}^{c_1}\ldots x_n^{c_n}f \in \II_{\bf x}$. Observe, additionally, that $\II_{\bf x}$ is prime and torus invariant.

We complete the proof  by contradiction. Let $f=\sum_{{\bf w}\in \ZZ^n} c_{\bf w} x^{\bf w}\in\II_{\bf x}$ and suppose that $f$ cannot be factored into $f=gh$ with $g\in\II_{\bf x}$ or $h\in\II_{\bf x}$. We have to show that $f=x_i$ for some $i$. Since the ideal is completely prime, it suffices to show that $f$ is a monomial. We assume that $f$ has the smallest number of nonzero summands such that no monomial term $c_{\bf w} x^{\bf w}$ with $c_{\bf w}\neq 0$ is contained in $\II$. It must therefore have at least two monomial terms.

We need the following fact.

\begin{lemma}
Using the notation introduced above,   a monomial $x^{\bf w}$ with ${\bf w}\in \ZZ^n$ is torus invariant if and only if ${\bf w}\in T^\top$, where $^\top$ denotes the orthogonal complement with respect to the standard bilinear form on $\ZZ^n$.
\end{lemma}
\begin{proof}
Recall from Section \ref{se:toric structure} that if ${\bf b}\in T$ defines a global toric action $ \psi_{{\bf x},\alpha}$, then $x^{\bf w}$ is  invariant under $ \psi_{{\bf x},\alpha}$ if and only if $\sum_{i=1}^n w_ib_i=0$. The assertion follows.
\end{proof}
  
 The function $f$, considered above, must be torus invariant, 
 hence for each pair ${\bf w},{\bf w}'\in \ZZ^n$ with $c_{\bf w},c_{{\bf w}'}\ne 0$ we obtain that ${\bf w}-{\bf w}'={\bf v}\in T^\top$.  Denote by $rad(\Lambda)$ the radical of the skew-symmetric bilinear form; i.e., the set of ${\bf u}\in \ZZ^n$ such that ${\bf u}^T\cdot \Lambda\cdot{\bf u}'=0$ for all ${\bf u}'\in \ZZ^n$. We have the following fact.

\begin{lemma}
The intersection $rad(\Lambda)\cap T^\top=\{0\}$. 
\end{lemma}
\begin{proof}
  
We know that $Im(\Lambda)\otimes\QQ+ker(B)\otimes \QQ= \QQ^n$. Hence $Im(\Lambda)^\top\otimes\QQ\cap T\otimes \QQ^\top=\{0\}$. Notice, that by definition $Im(\Lambda)^\top\otimes\QQ=rad(\Lambda)\otimes \QQ$. The assertion is proved.
\end{proof}

   Assume as above that $c_{\bf w},c_{{\bf w}'}\ne 0$ and ${\bf w}-{\bf w}'={\bf v}\in T^\top$.  The previous lemma yields that ${\bf v}\notin rad(\Lambda)$. This implies that there exists $i\in [1,n]$ such that $\{x_i,x^{\bf v}\}\ne 0$. Therefore,  $\{x_i, x^{\bf w}\}=cx_ix^{\bf w}\ne dx_ix^{{\bf w}'}\{x_i, x^{{\bf w}'}\}$ for some $c,d\in \CC$. 

Clearly, $cx_if-\{x_i,f\}\in \II$ and 
$$ cx_if-\{x_i,f\}=(c-c)c_{\bf w} x^{\bf w}+(c-d)c_{{\bf w}'} x^{{\bf w}'}+\ldots\ . $$

Hence, $cx_if-\{x_i,f\}\ne 0$ and it has fewer monomial summands than $f$ which contradicts our assumption. Therefore,   $\II$ contains a  monomial, and because it is prime it must contain some $x_i\in {\bf x}$. 
  The proposition is proved.
\end{proof}
 
In the following we use notation which is largely self-explanatory, but formally introduced in Appendix \ref{se:Poisson and quantum tori}.  Denote by $T_{[1,k]}$ the projection of the torus $T$ onto the subspace of $\ZZ^n$ defined by $e_{k+1},\ldots, e_n=0$ where $e_i$ denotes the $i$-th standard basis vector. We say that the cluster ${\bf x}$ is super-toric if $rk(T_{[1,k]}+Im(\Lambda_{[1,k]})=k$ for all $k=1,\ldots, n$. We have the following fact.

\begin{proposition}
\label{pr:TPP super toric q+}
 Let $\AA_q$ be a quantum cluster algebra, 
let ${\bf x}$ be a super-toric cluster, and $\II$ a torus invariant non-zero proper prime ideal such that $\II\cap{\bf x}=\{x_{k+1},\ldots, x_n\}$. Then $\II\cap \CC_\Lambda[x_1,\ldots, x_n]$ is generated by  $\{x_{k},\ldots, x_n\}$. 
\end{proposition}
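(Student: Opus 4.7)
The plan is to pass to the quantum-affine quotient by $(x_{k+1},\ldots,x_n)$ and show that the image of $J:=\II\cap\CC_\Lambda[x_1,\ldots,x_n]$ there vanishes; the main input will be the super-toric condition at level $k$, which plays the same role the compatible-pair identity $\mathrm{Im}(\Lambda)+T=\QQ^n$ played in the proof of Proposition~\ref{pr:TPP super toric q}. Since $x_{k+1},\ldots,x_n\in\II$, the two-sided ideal they generate is already contained in $J$, so only the reverse containment needs verification. I form
\[
Q\;:=\;\CC_\Lambda[x_1,\ldots,x_n]\big/(x_{k+1},\ldots,x_n)\;\cong\;\CC_{\Lambda_{[1,k]}}[x_1,\ldots,x_k],
\]
set $\bar J:=J/(x_{k+1},\ldots,x_n)$, and reduce to showing $\bar J=0$. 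Here $J$ is proper, completely prime, and torus-invariant (the argument used for $\II_{\bf x}$ in Proposition~\ref{pr:TPP super toric q} applies verbatim to $J$), so $\bar J$ inherits these properties in $Q$ with respect to the induced action of $T_{[1,k]}$.

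Suppose for contradiction that $\bar J\neq 0$. Since $T_{[1,k]}$ acts semisimply on $Q$ and each weight component of an element of $\bar J$ again lies in $\bar J$, I pick a nonzero torus eigenvector $\bar f=\sum c_{\bf w}\,x^{\bf w}\in\bar J$ with a \emph{minimum} number of nonzero monomial summands. Eigenvector-ness forces any two exponents of $\bar f$ to differ by an element of $T_{[1,k]}^\top$. The super-toric hypothesis $\mathrm{rk}\bigl(T_{[1,k]}+\mathrm{Im}(\Lambda_{[1,k]})\bigr)=k$, together with $\mathrm{rad}(\Lambda_{[1,k]})=\mathrm{Im}(\Lambda_{[1,k]})^\top$ for the skew form, supplies the key orthogonality
\[
T_{[1,k]}^\top\;\cap\;\mathrm{rad}(\Lambda_{[1,k]})\;=\;0\qquad(\text{in }\QQ^k).
\]
If $\bar f$ had two distinct exponents ${\bf w}_0,{\bf w}_1$, then $0\neq{\bf w}_1-{\bf w}_0\in T_{[1,k]}^\top\setminus\mathrm{rad}(\Lambda_{[1,k]})$, so some $j\in[1,k]$ satisfies $e_j^{T}\Lambda_{[1,k]}({\bf w}_1-{\bf w}_0)\neq 0$. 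Applying $x_jx^{\bf w}=t^{e_j^{T}\Lambda_{[1,k]}{\bf w}}\,x^{\bf w}x_j$ in $Q$, the two-sided combination
\[
g\;:=\;t^{-e_j^{T}\Lambda_{[1,k]}{\bf w}_0}\,x_j\bar f\;-\;\bar f\,x_j\;=\;\sum_{\bf w}c_{\bf w}\bigl(t^{e_j^{T}\Lambda_{[1,k]}({\bf w}-{\bf w}_0)}-1\bigr)\,x^{{\bf w}+e_j}
\]
lies in $\bar J$, remains a torus eigenvector, kills the ${\bf w}_0$-term but preserves the ${\bf w}_1$-term, and is therefore a strictly shorter nonzero element of $\bar J$, contradicting minimality.

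Hence $\bar f=cx^{{\bf w}_0}$ is a single monomial with ${\bf w}_0\neq 0$ (by properness of $\bar J$), and complete primality then forces some $x_i\in\bar J$ with $i\in[1,k]$. Lifting to $\CC_\Lambda[x_1,\ldots,x_n]$ gives $x_i\in\II\cap{\bf x}$, contradicting the standing hypothesis $\II\cap{\bf x}=\{x_{k+1},\ldots,x_n\}$. Therefore $\bar J=0$, equivalently $J=(x_{k+1},\ldots,x_n)$. The most delicate step I anticipate is the passage to $Q$: one must check that $\bar J$ remains (completely) prime, which follows from the isomorphism $Q/\bar J\cong\CC_\Lambda[x_1,\ldots,x_n]/J$, and that minimality of $\bar f$ is genuinely attainable within the subspace of torus eigenvectors. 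Once these points are secured, the rest is a direct translation of the commutator bookkeeping of Proposition~\ref{pr:TPP super toric q} to the level-$k$ truncations of $\Lambda$ and $T$.
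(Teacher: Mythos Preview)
Your proof is correct and follows essentially the same route as the paper's: reduce to the truncated quantum affine space $\CC_{\Lambda_{[1,k]}}[x_1,\ldots,x_k]$ (you pass to the quotient by $(x_{k+1},\ldots,x_n)$, the paper works in the isomorphic subring directly) and then rerun the torus-eigenvector shortening argument of Proposition~\ref{pr:TPP super toric q}, with the super-toric identity $T_{[1,k]}^\top\cap\mathrm{rad}(\Lambda_{[1,k]})=0$ replacing the compatible-pair identity. Your writeup is considerably more careful than the paper's one-line sketch (you verify that $\bar J$ remains completely prime and $T_{[1,k]}$-invariant, and use the genuine quantum commutator rather than the Poisson-bracket shorthand), and you have implicitly corrected the off-by-one typo in the conclusion of the statement.
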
 
 
  \begin{proof}
    Suppose that $f\in \CC_{\Lambda_{[1,k-1]}}[x_{1},\ldots, x_{k-1}]\in \II$. The cluster, however,  is super-toric, hence we can adapt the argument from the proof of Proposition \ref{pr:TPP super toric q} to $\CC_{\Lambda_{[1,k-1]}}[x_{1},\ldots, x_{k-1}]$, $T_{[1,k-1]}$ and $\Lambda_{[1,k-1]}$ and show that there exists some $j<k$ such that $x_j \in \II$. We obtain the desired contradiction and the proposition is proved. 
  \end{proof}
  \subsection{The Homomorphism Theorem}
  Let $\AA_q$ be a quantum cluster algebra and let $({\bf x}=(x_1,\ldots, x_n),B,\Lambda)$ be a seed. Assume once again, that all prime ideals in $\AA_q$ are completely prime.  We say that the cluster admits a co-dimension one stratification (COS) of depth $r\le n$ if  there exist proper toric prime ideals $\II_r\subset \II_{n-1}\ldots \subset I_1$ such that $\II_k\cap{\bf x}=\{x_k,\ldots, x_n\}$ for all $1\le k\le r$.
   Denote, as in Appendix \ref{se:Poisson and quantum tori} by $\Lambda_{[1,k-1]}$ the $(k-1)$-st principal submatrix.  We have the following result.

 \begin{theorem}
 \label{th:ideals descr}
  Let $\AA_q$ be a quantum cluster algebra and let ${\bf x}$ be a cluster with a COS of depth $r$. Then there exists an injective algebra homomorphism 
 $$ \AA_q/\II_k\hookrightarrow \CC_{\Lambda_{[1,k-1]}}[x_1^{\pm1},\ldots x_{k-1}^{\pm1}]\ .$$
  
 \end{theorem}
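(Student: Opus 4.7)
The strategy is non-commutative Ore localization of $\AA_q/\II_k$ at the multiplicative set $S$ generated by the cluster variables $x_1,\dots,x_{k-1}$.  First I would verify that $S$ is an Ore set in $\AA_q$: since $x_1,\dots,x_{k-1}$ sit inside the quantum affine space $\CC_\Lambda[x_1,\dots,x_n]\subset\AA_q$ and pairwise $q$-commute, they are normal elements of $\AA_q$ and the monoid they generate is Ore.  The COS hypothesis $\II_k\cap{\bf x}=\{x_k,\dots,x_n\}$ together with complete primality of $\II_k$ forces $S\cap\II_k=\emptyset$, so the Ore localization $\AA_q[S^{-1}]$ and the extended prime $\II_k[S^{-1}]$ are well defined.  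By a standard argument $\AA_q[S^{-1}]/\II_k[S^{-1}]\simeq(\AA_q/\II_k)[S^{-1}]$, and since $\AA_q/\II_k$ is a domain the canonical map $\AA_q/\II_k\hookrightarrow(\AA_q/\II_k)[S^{-1}]$ is injective.

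To obtain the theorem it then suffices to exhibit an isomorphism $\psi:\CC_{\Lambda_{[1,k-1]}}[x_1^{\pm 1},\dots,x_{k-1}^{\pm 1}]\stackrel{\sim}{\to}(\AA_q/\II_k)[S^{-1}]$, since composing its inverse with the inclusion above produces the desired embedding.  The natural candidate sends $x_i$ to the localized class $\bar x_i$; it is well defined because the $\bar x_i$ are units in the localization and pairwise $q$-commute with the parameters recorded by the principal submatrix $\Lambda_{[1,k-1]}$.  Injectivity of $\psi$ follows from (a variant of) Proposition~\ref{pr:TPP super toric q+}: the intersection $\II_k\cap\CC_\Lambda[x_1,\dots,x_n]$ equals $\langle x_k,\dots,x_n\rangle$, which yields an inclusion $\CC_{\Lambda_{[1,k-1]}}[x_1,\dots,x_{k-1}]\hookrightarrow\AA_q/\II_k$ and hence, after Ore localization at $S$, an injection $\CC_{\Lambda_{[1,k-1]}}[x_1^{\pm 1},\dots,x_{k-1}^{\pm 1}]\hookrightarrow(\AA_q/\II_k)[S^{-1}]$.

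The main obstacle is surjectivity of $\psi$, i.e.\ showing that every class in $(\AA_q/\II_k)[S^{-1}]$ is represented by a Laurent polynomial in $\bar x_1,\dots,\bar x_{k-1}$.  Using the quantum Laurent phenomenon I would embed $\AA_q\subset\mathcal T=\CC_\Lambda[x_1^{\pm 1},\dots,x_n^{\pm 1}]$ and, for each $f\in\AA_q$, split $f=f_0+f_+$ where $f_0$ collects the monomials with zero exponent on each of $x_k,\dots,x_n$.  Then $\bar f_0$ already lies in the image of $\psi$, so it suffices to establish $\bar f_+=0$ in $(\AA_q/\II_k)[S^{-1}]$.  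My plan is to exploit the $\ZZ^{n-k+1}$-grading of $\mathcal T$ by the exponents of $x_k,\dots,x_n$ together with the descending COS chain $\II_r\subset\cdots\subset\II_1$, using induction on $k$ (with trivial base case $k=1$ where $\AA_q/\II_1\simeq\CC$) to conclude that each nonzero graded component of $f_+$ is killed modulo $\II_k[S^{-1}]$.  The real subtlety is that $\AA_q[S^{-1}]$ is not contained in the partial Laurent ring $\CC_\Lambda[x_1^{\pm 1},\dots,x_{k-1}^{\pm 1},x_k,\dots,x_n]$, because mutations $y_j$ with $j\geq k$ contribute factors $x_j^{-1}$; the vanishing of those problematic contributions modulo $\II_k$ is precisely where the COS hypothesis does the real work and is the crux of the argument.
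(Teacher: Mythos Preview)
There is a concrete error in your first step.  You claim that $x_1,\dots,x_{k-1}$ are normal in $\AA_q$ because they pairwise $q$-commute inside the quantum affine space.  Pairwise $q$-commutation only makes them normal in $\CC_\Lambda[x_1,\dots,x_n]$, not in $\AA_q$.  In fact a mutable initial variable $x_i$ is \emph{never} normal in $\AA_q$: writing the exchange relation as $x_i x_i'=M_++M_-$ with $M_\pm$ the two exchange monomials, one computes that conjugation by $x_i$ sends $x_i'$ to $x_i^{-1}(q^{c_+}M_++q^{c_-}M_-)$ with $c_+-c_-=-(B\Lambda)_{ii}=-d_i\neq 0$, so $x_ix_i'x_i^{-1}\notin\AA_q$.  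Hence your justification for the Ore condition fails, and with it the construction of $(\AA_q/\II_k)[S^{-1}]$ as an Ore localization.  A second problem is that your decomposition $f=f_0+f_+$ takes place in the ambient quantum torus, not in $\AA_q$; the pieces $f_0,f_+$ need not lie in $\AA_q$ individually, so the statement ``$\bar f_+=0$ in $(\AA_q/\II_k)[S^{-1}]$'' is not well posed in your framework.  You also never invoke the super-toric hypothesis, which is where the COS data actually does its work.

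The paper avoids Ore localization altogether.  It proves by downward induction on $k$ the explicit description: $a\in\II_k$ if and only if every monomial in the Laurent expansion of $a$ carries a nonzero exponent on some $x_j$ with $j\ge k$.  The inductive step has two parts.  First, a \emph{Claim}: the quotient $\AA_q/\II_{k+1}$ embeds not just in $\CC_{\Lambda_{[1,k]}}[x_1^{\pm1},\dots,x_k^{\pm1}]$ but in the mixed ring $\CC_{\Lambda_{[1,k]}}[x_1^{\pm1},\dots,x_{k-1}^{\pm1},x_k]$, i.e.\ no negative powers of $x_k$ occur.  This is exactly the ``real subtlety'' you flagged, and it is proved by contradiction: if some element required $x_k^{-\ell}$, clearing denominators produces an element of $\II_k$ whose Laurent expansion has a monomial free of $x_k,\dots,x_n$, whence Proposition~\ref{pr:TPP super toric q+} (which needs the super-toric hypothesis) forces some $x_j$ with $j<k$ into $\II_k$, contradicting $\II_k\cap{\bf x}=\{x_k,\dots,x_n\}$.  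Second, one identifies $\II_k/\II_{k+1}$ with the contraction to $\AA_q/\II_{k+1}$ of the principal ideal $(x_k)$ in $\CC_{\Lambda_{[1,k]}}[x_1^{\pm1},\dots,x_{k-1}^{\pm1},x_k]$, by showing this contraction is the unique minimal TIP containing $x_k$ but none of $x_1,\dots,x_{k-1}$.  The embedding $\AA_q/\II_k\hookrightarrow\CC_{\Lambda_{[1,k-1]}}[x_1^{\pm1},\dots,x_{k-1}^{\pm1}]$ then follows from the elementary fact that $S/(I\cap S)\hookrightarrow R/I$ for any subring $S\subset R$ and ideal $I\subset R$.
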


 \begin{proof}
We need the following fact. Let $(\cdot,\cdot)$ denote the standard scalar product on $\ZZ^n$ and let $e_1,\ldots, e_n$ be the standard basis of $\ZZ^n$.
\begin{proposition}
Let $a\in \AA_q$ and let $a=\sum_{{\bf m}\in \ZZ^n} c_{\bf m} x^{\bf m}$ be its Laurent polynomial. Then $a\in \II_k$ if and only 
$({\bf m}, e_i)=0$ for all $i\in[k,n]$ implies that  $c_{\bf m}=0$.
\end{proposition}

 \begin{remark}
 Notice that it is not at all clear that this defines an ideal.
 \end{remark}
 \begin{proof}
 
 We prove the assertion by induction on $(n-k)$. It is trivially satisfied for $n-k=-1$. Suppose now that the theorem   holds for $k+1$.  
We make the  following claim.

\begin{claim}There are injective homomorphisms   of algebras
$$\AA/\II_{k+1}\hookrightarrow  \CC_{\Lambda_k}[x_{1}^{\pm 1},\ldots, x_{k-1}^{\pm 1}, x_{k}]\subset \CC[x_{1}^{\pm 1},\ldots, x_{k}^{\pm1}]\ .$$
\end{claim} 
 
 \begin{proof} The second inclusion is trivial. Let us prove the first one.
 Suppose not. Then there exists $z\in \AA_q/\II_{k+1}$ which can be expressed as $z=x_{k}^{-\ell}\sum_{{\bf m}\in \ZZ^{k}} c_{\bf m} x^{\bf m}$ , where $\ell\in \ZZ_{>0}$, $c_{\bf m}\in \CC$ and where $\ell$ is minimal with the property that $c_{\bf m}\ne 0$ implies that ${\bf m}_{k}\ge 0$. If we multiply by the smallest common denominator (a monomial  $x_{k}^\ell x_1^{\beta_1}\ldots x_{k-1}^{\beta_{k-1}}$ with $\beta_1,\ldots, \beta_{k-1}\in \ZZ_{\ge0}$), then we obtain an element $\tilde z\in  \CC_{\Lambda_{[1,k]}}[x_1,\ldots,x_{k}]\subset\AA_q/\II_{k+1}$. Clearly $\tilde z\in \II_k\subset \AA_q/\II_{k+1}$, where we abuse notation and denote by $\II_k$ the image of the  $\II_k\subset \AA_q$ in $\AA_q/\II_{k+1}$. The element $\tilde z$ contains at least one monomial summand $c_{\bf w} x^{\bf w}$, ${\bf w}\in \ZZ_{\ge 0}^n$ where ${\bf w}_k=0$. We obtain that its pre-image $\tilde z$ under the projection map $\AA_q\to \AA_q/\II_{k+1}$ (which is the identity map on $\CC_{\Lambda_{[1,k]}}[x_1,\ldots, x_{k}]$) lies in $\II_k$, and, hence, as ${\bf x}$ is super-toric, there exists, by Proposition \ref{pr:TPP super toric q+}, a cluster variable $x_i$ with $i\le k-1$ such that $x_i\in \II_k$. That however, contradicts our assumption. The claim is proved.
 \end{proof}

Now, consider the ideal generated by $x_{k}$ in $\CC_{\Lambda_{[1,k]}}[x_{1}^{\pm 1},\ldots, x_{k-1}^{\pm 1}, x_{k}]$  . It is easy to see that it is  torus invariant and prime. Consider its intersection $\tilde \II$ with $\AA/\II_{k+1}$. It suffices to show that it is the unique minimal TIP containing the ideal $\hat \II$  generated by $x_{k}$,  but none of the $x_1,\ldots x_\ell$. It  is, clearly, toric  and prime, because it is the intersection of a  toric    prime ideal  with a torus invariant subring (recall that all prime ideals are assumed to be completely prime). Now, let $z\in \tilde\II-\hat \II$. Then, there exists a monomial $x^{\bf v}$  with ${\bf v}\in \ZZ^k_{\ge 0}$ (e.g.~the smallest common denominator) such that $x^{\bf v}\cdot z\in \hat\II$.  Hence any TIP $\JJ$ not containing  any of the $x_1,\ldots, x_{k-1}$ must contain $z$. This implies that $\tilde \II\subset \JJ$, and hence, $\tilde \II$ is the unique minimal TIP with  $\tilde \II\cap{\bf x}=\{x_{k},\ldots, x_n\}$.  The proposition is proved.
\end{proof}

Theorem \ref{th:ideals descr} now follows from the fact that if $R$ is a  ring, $S\subset R$  a  subring and $I\subset R$ an ideal, then $I\cap S$ is an ideal in $S$, and the canonical inclusion $S/(I\cap S) \hookrightarrow R/I$ is a homomorphism of rings. The theorem is proved.
 \end{proof}
 
% Theorem \ref{th:ideals descr} and Lemma \ref{le:COSD} imply the following corollary.
 
% \begin{corollary}
% \label{co:incl}Let $\AA$ be a cluster algebra and suppose that all clusters are geometrically generic and super-toric. 
% Let $\II$, and $\JJ$ be two TPPs. Then, $\II\subset \JJ$ if and only if there exists a cluster ${\bf x}$, defining for both $\II$ and $\JJ$,  such that $(\II\cap{\bf %x})\subset (\JJ\cap {\bf x})$.
 %\end{corollary}
 \subsection{Semiclassical Versions}
 In \cite{ZW tpc}, we proved versions of the results in the above section regarding torus invariant Poisson prime ideals in cluster algebras, in fact the proofs are almost literally the same. We collect their statements here, for the reader's convenience. In the following, let $\AA$ be a cluster algebra, $({\bf x},B)$   seed and consider a Poisson bracket on $A$ that is obtained from a compatible pair $(B,\Lambda)$. 
 
\begin{proposition} 
\label{pr:TPP super toric cl} Let $\II\subset A$ be  a  non-zero   torus invariant Poisson prime ideal. Then, $\II$ contains a   a cluster variable $x_i\in {\bf x}$.
\end{proposition}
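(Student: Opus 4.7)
The plan is to mirror the proof of Proposition~\ref{pr:TPP super toric q} almost word for word, with the commutator-style quantum computation replaced by the honest Poisson bracket. This is legitimate because a Poisson prime ideal $\II$ is an ordinary ideal that is in addition closed under $\{x_i,\cdot\}$ for each cluster variable $x_i$, so every manipulation used in the quantum proof has a direct Poisson analogue.

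First I would pass to $\II_{\bf x} := \II \cap \CC[x_1,\ldots,x_n]$. The same clearing-of-denominators argument as in the quantum proof shows $\II_{\bf x}\neq 0$, and the log-canonical identity $\{x_i, x^{\bf w}\} = (\Lambda{\bf w})_i\, x_i x^{\bf w}$ keeps $\II_{\bf x}$ closed under $\{x_i,\cdot\}$; primality and torus-stability pass automatically. Since the torus acts semisimply on monomials, $\II_{\bf x}$ is spanned by its weight vectors. Choose $f\in\II_{\bf x}$ that is torus-homogeneous and has the smallest possible number of monomial summands subject to the property that no individual summand of $f$ lies in $\II$. If $f$ is a monomial then primality forces some cluster variable $x_i$ into $\II$ and we are done, so assume $f=\sum_{\bf w} c_{\bf w} x^{\bf w}$ has at least two terms.

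Torus-homogeneity gives ${\bf v} := {\bf w}-{\bf w}' \in T^\top$ for any two exponents of $f$, and by the lemma $rad(\Lambda)\cap T^\top = \{0\}$ (whose proof from the quantum case applies verbatim, as it concerns only the integer matrices $B$ and $\Lambda$) there is an index $i$ with $\alpha_{\bf w} := (\Lambda{\bf w})_i \neq (\Lambda{\bf w}')_i =: \alpha_{{\bf w}'}$. The element
\[
g \;=\; \alpha_{\bf w}\, x_i f \;-\; \{x_i, f\} \;=\; \sum_{\bf u} (\alpha_{\bf w}-\alpha_{\bf u})\, c_{\bf u}\, x_i x^{\bf u}
\]
then lies in $\II_{\bf x}$: the first summand by ideal absorption, the second by the Poisson-ideal property. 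Its ${\bf u}={\bf w}$ coefficient vanishes while its ${\bf u}={\bf w}'$ coefficient is nonzero, so $g$ is a nonzero element of $\II_{\bf x}$ with strictly fewer monomial summands than $f$. Either some monomial summand of $g$ lies in $\II$, forcing a cluster variable into $\II$ by primality, or $g$ contradicts the minimal choice of $f$; in either case the assumption that $\II$ contains no $x_i$ collapses.

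The only real obstacle is bookkeeping around the minimality clause and verifying that $g$ stays in the polynomial ring, which is immediate because $f\in\CC[x_1,\ldots,x_n]$ and the log-canonical bracket preserves this subring. Compared with the quantum case the argument is in fact slightly cleaner, since in the commutative setting every prime ideal is automatically completely prime, so the completely-prime hypothesis imposed in Proposition~\ref{pr:TPP super toric q} need not be repeated here.
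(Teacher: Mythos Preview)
Your proposal is correct and is exactly what the paper intends: it states explicitly that the proofs of the semiclassical statements ``are almost literally the same'' as in the quantum case (Proposition~\ref{pr:TPP super toric q}), and your adaptation---replacing the quantum quasi-commutation by the log-canonical Poisson identity $\{x_i,x^{\bf w}\}=(\Lambda{\bf w})_i\,x_ix^{\bf w}$ in the cancellation step---is precisely that translation. Your observation that complete primality is automatic in the commutative setting is also apt.
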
 
 Assume from now on that the seed is super-toric.
 
\begin{proposition}  Let   $\II$  be a proper non-zero torus invariant Poisson prime ideal such that $\II\cap{\bf x}=\{x_{k+1},\ldots, x_n\}$. Then $\II\cap \CC_\Lambda[x_1,\ldots, x_n]$ is generated by  $\{x_{k},\ldots, x_n\}$. 
\end{proposition}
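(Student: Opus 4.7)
The strategy is to transport the proof of Proposition \ref{pr:TPP super toric q+} to the semiclassical (Poisson) setting. Indeed, the author remarks at the start of this subsection that the Poisson proofs are ``almost literally the same'' as the quantum ones; the task therefore amounts to replacing the skew-commutation step by its semiclassical counterpart. Set $\II_{\bf x} := \II \cap \CC_\Lambda[x_1,\ldots,x_n]$ and observe that this is a torus invariant Poisson prime ideal of $\CC_\Lambda[x_1,\ldots,x_n]$ containing the ideal $J$ generated by $\{x_{k+1},\ldots,x_n\}$. It remains to show $\II_{\bf x} \subseteq J$, or equivalently that every $f \in \II_{\bf x}$ reduces to zero modulo $J$.

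Given $f \in \II_{\bf x}$, I would write $f = f_0 + h$ with $h \in J$ and $f_0 \in \CC_{\Lambda_{[1,k]}}[x_1,\ldots,x_k]$, so that $f_0 = f - h \in \II$. The claim thus reduces to showing $\II \cap \CC_{\Lambda_{[1,k]}}[x_1,\ldots,x_k] = 0$. Assume for contradiction that a nonzero element lies in this intersection, and choose $f_0 \neq 0$ with the minimum possible number of nonzero monomial summands. Mimicking the torus-invariance lemma embedded in the proof of Proposition \ref{pr:TPP super toric q}, one can arrange that $f_0$ is torus invariant under the restricted action of $T_{[1,k]}$, so that any two weights ${\bf w}, {\bf w}'$ appearing in the support of $f_0$ satisfy ${\bf w} - {\bf w}' \in T_{[1,k]}^\perp$.

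Now the super-toric hypothesis $\operatorname{rk}(T_{[1,k]} + \operatorname{Im}(\Lambda_{[1,k]})) = k$ yields, exactly as in the second embedded lemma of the proof of Proposition \ref{pr:TPP super toric q}, the equality $T_{[1,k]}^\perp \cap \operatorname{rad}(\Lambda_{[1,k]}) = \{0\}$. If $f_0$ had two or more monomial terms, there would exist ${\bf w} \neq {\bf w}'$ in its support and an index $i \le k$ such that the scalars $\lambda_i({\bf u})$ defined by $\{x_i, x^{\bf u}\}_\Lambda = \lambda_i({\bf u})\, x_i x^{\bf u}$ differ between ${\bf u} = {\bf w}$ and ${\bf u} = {\bf w}'$. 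Then $\lambda_i({\bf w})\, x_i f_0 - \{x_i, f_0\}_\Lambda$ lies in $\II$ because $\II$ is a Poisson ideal, kills the $x^{\bf w}$ summand, and is still nonzero, contradicting the minimality of $f_0$. Hence $f_0$ must be a single monomial, and the primeness of $\II$ then forces some $x_j$ with $j \le k$ into $\II$, contradicting $\II \cap {\bf x} = \{x_{k+1},\ldots,x_n\}$.

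The main obstacle is bookkeeping rather than mathematics: one must verify that $\CC_{\Lambda_{[1,k]}}[x_1,\ldots,x_k]$ is a Poisson subalgebra stable under the restricted torus action, and that the super-toric hypothesis on the full cluster restricts cleanly to the rank condition $\operatorname{rk}(T_{[1,k]} + \operatorname{Im}(\Lambda_{[1,k]})) = k$ needed at the critical step. Once these routine verifications are in place, the argument is an entirely mechanical translation of the quantum proof, with skew commutators of Laurent monomials replaced throughout by the log-canonical Poisson brackets $\{x_i, x^{\bf w}\}_\Lambda$.
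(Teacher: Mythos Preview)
Your proposal is correct and follows essentially the same approach as the paper: the author explicitly states that the Poisson proofs are ``almost literally the same'' as the quantum ones and, in the proof of Proposition~\ref{pr:TPP super toric q+}, reduces to the case $f\in \CC_{\Lambda_{[1,k-1]}}[x_1,\ldots,x_{k-1}]\cap\II$ and then adapts the minimal-monomial argument of Proposition~\ref{pr:TPP super toric q} to the restricted data $T_{[1,k-1]}$, $\Lambda_{[1,k-1]}$---exactly the strategy you outline with Poisson brackets in place of $q$-commutators. (Note the off-by-one index discrepancy: the paper writes $\{x_k,\ldots,x_n\}$ in the conclusion but the proof and the hypothesis make clear that $\{x_{k+1},\ldots,x_n\}$ is intended, which is the version you are proving.)
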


  \begin{theorem}
 \label{th:ideals descr cl}
 Under the same assumptions as above, there exists an injective homomorphism of Poisson algebras
 $$ \AA_/\II_k\hookrightarrow (\CC[x_1^{\pm1},\ldots x_{k-1}^{\pm1}], \Lambda_{[1,k-1]})\ .$$  
 \end{theorem}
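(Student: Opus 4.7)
The plan is to mirror the proof of Theorem \ref{th:ideals descr} verbatim, replacing the quantum affine space $\CC_\Lambda[x_1,\ldots,x_n]$ by the commutative polynomial algebra $\CC[x_1,\ldots,x_n]$ equipped with the log-canonical Poisson bracket induced by $\Lambda$, and replacing the $q$-commutator $[x_i,-]$ by the Poisson bracket $\{x_i,-\}_\Lambda$. The translation works because the log-canonical bracket satisfies $\{x_i,x^{\bf w}\}_\Lambda=\lambda_i({\bf w})\,x_ix^{\bf w}$ with $\lambda_i({\bf w})=\sum_j\lambda_{ij}w_j$, which plays exactly the role of the $q$-commutation scalar in the quantum argument, and because Proposition \ref{pr:TPP super toric cl} is the Poisson analog of Proposition \ref{pr:TPP super toric q}.

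First I would proceed by induction on $n-k$, with a trivial base case. For the induction step I would establish the Poisson analog of the internal Claim: namely that the natural map induces an injective homomorphism of Poisson algebras
$$\AA/\II_{k+1}\hookrightarrow (\CC[x_1^{\pm1},\ldots,x_{k-1}^{\pm1},x_k],\Lambda_{[1,k]}).$$
Injectivity is furnished by the inductive hypothesis applied to $k+1$. To show the image has only non-negative powers of $x_k$, I would mimic the quantum argument: if some element had the form $z=x_k^{-\ell}\sum_{\bf m} c_{\bf m}x^{\bf m}$ with $\ell>0$ minimal, then clearing denominators by a monomial in $x_1,\ldots,x_{k-1},x_k$ yields an element $\tilde z\in \CC[x_1,\ldots,x_k]\cap(\II_k/\II_{k+1})$ having at least one monomial summand $c_{\bf w}x^{\bf w}$ with $w_k=0$. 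Lifting $\tilde z$ to $\AA$ and invoking Proposition \ref{pr:TPP super toric cl} together with the super-toric hypothesis on ${\bf x}$ would then force $x_i\in\II_k$ for some $i\le k-1$, contradicting $\II_k\cap{\bf x}=\{x_k,\ldots,x_n\}$.

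Next, I would observe that the ideal generated by $x_k$ inside $(\CC[x_1^{\pm1},\ldots,x_{k-1}^{\pm1},x_k],\Lambda_{[1,k]})$ is torus invariant, Poisson, and prime. Its pullback $\tilde\II$ to $\AA/\II_{k+1}$ is therefore a torus invariant Poisson prime, and the same smallest-common-denominator argument as in the quantum case shows $\tilde\II$ is the unique minimal such prime with $\tilde\II\cap{\bf x}=\{x_k\}$; hence it coincides with $\II_k/\II_{k+1}$. Passing to the quotient then produces the desired Poisson embedding
$$\AA/\II_k\hookrightarrow (\CC[x_1^{\pm1},\ldots,x_{k-1}^{\pm1}],\Lambda_{[1,k-1]}).$$

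The main obstacle is bookkeeping rather than conceptual: one must check that each reduction of the form ``$cx_if-\{x_i,f\}_\Lambda$ has strictly fewer monomial summands than $f$'' really carries through — which requires the Poisson analog of the lemma $\mathrm{rad}(\Lambda)\cap T^\top=\{0\}$, proved by the identical linear-algebra argument — and that Poisson-primeness is preserved under the relevant intersections and quotients. Once these verifications are in place, the proof is word-for-word the Poisson transcription of the proof of Theorem \ref{th:ideals descr}, and is exactly the argument carried out in \cite{ZW tpc}.
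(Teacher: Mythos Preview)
Your proposal is correct and matches the paper's approach exactly: the paper does not give an independent proof of Theorem~\ref{th:ideals descr cl} but states that ``the proofs are almost literally the same'' as in the quantum case and refers to \cite{ZW tpc}, which is precisely the transcription you outline. The only minor point is that where you invoke Proposition~\ref{pr:TPP super toric cl} in the Claim you actually need its refinement (the Poisson analog of Proposition~\ref{pr:TPP super toric q+}, stated immediately after Proposition~\ref{pr:TPP super toric cl}), but you clearly have the right argument in mind since you pair it with the super-toric hypothesis.
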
 
 
\section{On the Poisson and Quantum Spectra of Cluster Algebras}
\label{se:spectra}
Recall (e.g.~from Appendix \ref{se: goodearl letzter strat}) that the spectrum of a Noetherian quantum cluster algebra, resp.~ a Noetherian Poisson cluster algebra is stratified by the torus invariant  prime ideals (TIPs), resp.~torus invariant Poisson prime ideals (TIPPs). The following theorem describes these strata. Denote as in Appendix \ref{se: goodearl letzter strat} by $P.spec_\JJ (\AA)$ (resp.~$spec_\JJ (\AA_q)$) the set of  Poisson prime ideals $I$ (resp.~prime ideals) such that $\JJ$ is the maximal TIP $\JJ\subset I$ (resp.~TIPP $\JJ\subset I$).  

\begin{theorem}
Let $(B,\Lambda)$ be a compatible pair and $({\bf x}, B)$ (resp.~($({\bf x}, B,\Lambda)$) a super-toric seed  with COS   of depth $r$ in the corresponding cluster algebra $\AA$ (resp.~quantum cluster algebra $\AA_q$). Then,

\noindent(a) $P.spec_{\II_k}(\AA)$ is homeomorphic to $P.spec(\CC[x_1^{\pm 1},\ldots, x_{k-1}^{\pm 1}],\Lambda)$ and

\noindent(b) $spec_{\II_k} (\AA_q)$ is homeomorphic to $spec(\CC_\Lambda[x_1^{\pm 1},\ldots,x_n^{\pm 1}])$.
  
\end{theorem}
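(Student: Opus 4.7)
The strategy is to combine Theorem \ref{th:ideals descr} (and its semiclassical analogue \ref{th:ideals descr cl}) with the Goodearl--Letzter localization machinery recalled in Appendix \ref{se: goodearl letzter strat}. I focus on part (b); part (a) is proved by repeating the argument with every occurrence of ``prime'' replaced by ``Poisson prime'' and Theorem \ref{th:ideals descr cl} used in place of Theorem \ref{th:ideals descr}. The canonical projection $\pi:\AA_q \to \AA_q/\II_k$ induces a homeomorphism from $spec_{\II_k}(\AA_q)$ onto the set of primes of $\AA_q/\II_k$ whose maximal contained TIP is $(0)$, so it suffices to analyze this set.

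By Theorem \ref{th:ideals descr} we have an embedding $\AA_q/\II_k \hookrightarrow \CC_{\Lambda_{[1,k-1]}}[x_1^{\pm 1},\ldots,x_{k-1}^{\pm 1}]$, and under this embedding the images $\bar x_1,\ldots,\bar x_{k-1}$ of the cluster variables are nonzero (since $\II_k\cap{\bf x}=\{x_k,\ldots,x_n\}$) and normal, as they $q$-commute with every homogeneous element of the ambient quantum torus via $\Lambda_{[1,k-1]}$. Hence the multiplicative set $S_k$ they generate is an Ore set in $\AA_q/\II_k$. The localization $S_k^{-1}(\AA_q/\II_k)$ injects into $\CC_{\Lambda_{[1,k-1]}}[x_1^{\pm 1},\ldots,x_{k-1}^{\pm 1}]$ via the extension of the embedding from Theorem \ref{th:ideals descr}, and this injection is surjective because the quantum torus is generated as an algebra by the $\bar x_i^{\pm 1}$, all of which lie in the image. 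Hence $S_k^{-1}(\AA_q/\II_k)\cong \CC_{\Lambda_{[1,k-1]}}[x_1^{\pm 1},\ldots,x_{k-1}^{\pm 1}]$, and by the standard Ore correspondence this yields a homeomorphism between $spec(\CC_{\Lambda_{[1,k-1]}}[x_1^{\pm 1},\ldots,x_{k-1}^{\pm 1}])$ and the set of primes of $\AA_q/\II_k$ disjoint from $S_k$.

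It remains to identify the primes of $\AA_q/\II_k$ disjoint from $S_k$ with $spec_{\II_k}(\AA_q)$. Such primes lift uniquely to primes $I$ of $\AA_q$ satisfying $\II_k\subseteq I$ and $I\cap\{x_1,\ldots,x_{k-1}\}=\emptyset$. Let $\JJ$ be the maximal TIP contained in such an $I$; then $\II_k\subseteq\JJ\subseteq I$, so $\JJ\cap{\bf x}\supseteq\{x_k,\ldots,x_n\}$ and $\JJ\cap\{x_1,\ldots,x_{k-1}\}=\emptyset$, giving $\JJ\cap{\bf x}=\{x_k,\ldots,x_n\}=\II_k\cap{\bf x}$. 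Proposition \ref{pr:TPP super toric q+} then yields $\JJ\cap\CC_\Lambda[x_1,\ldots,x_n]=\II_k\cap\CC_\Lambda[x_1,\ldots,x_n]$, and the Laurent phenomenon forces $\JJ=\II_k$ (any $f\in\JJ$ can be cleared by multiplication by a cluster monomial into $\JJ\cap\CC_\Lambda[x_1,\ldots,x_n]=\II_k\cap\CC_\Lambda[x_1,\ldots,x_n]\subseteq\II_k$, hence $f\in\II_k$). The main obstacle I anticipate is precisely this last uniqueness step: verifying that a TIP is determined by its intersection with the non-Laurent subring $\CC_\Lambda[x_1,\ldots,x_n]$ requires combining torus invariance with the Laurent phenomenon (and on the Poisson side, replacing $q$-commutators by Hamiltonian flow in the arguments of Section \ref{se:COS}), while ensuring that the Ore conditions for $S_k$ pass cleanly to the quotient $\AA_q/\II_k$.
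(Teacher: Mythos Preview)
Your overall strategy is exactly the paper's: localize $\AA_q/\II_k$ at the multiplicative set generated by $\bar x_1,\ldots,\bar x_{k-1}$, use Theorem~\ref{th:ideals descr} to identify the result with the quantum torus $\CC_{\Lambda_{[1,k-1]}}[x_1^{\pm1},\ldots,x_{k-1}^{\pm1}]$, and then argue that the primes of $\AA_q/\II_k$ disjoint from this Ore set are precisely those lying in $spec_{\II_k}(\AA_q)$.

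The one place where your argument differs from the paper --- and where it does not quite close --- is the uniqueness step ``$\JJ=\II_k$''. Your clearing argument says: for $f\in\JJ$, the Laurent phenomenon gives a monomial $x^{\bf m}$ with $x^{\bf m}f\in\JJ\cap\CC_\Lambda[x_1,\ldots,x_n]=\II_k\cap\CC_\Lambda[x_1,\ldots,x_n]\subset\II_k$, hence $f\in\II_k$. But that last ``hence'' needs $x^{\bf m}\notin\II_k$, and the clearing monomial produced by the Laurent phenomenon in $\AA_q$ may well involve $x_k,\ldots,x_n\in\II_k$. So complete primality of $\II_k$ does not let you cancel. You correctly flag this step as the main obstacle, but the proposed fix does not work as written.

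The paper sidesteps this entirely. Instead of comparing intersections with $\CC_\Lambda[x_1,\ldots,x_n]$, it works directly in the quotient: if $\JJ\supsetneq\II_k$ is a TIP, then $\JJ/\II_k$ is a nonzero TIP in $\AA_q/\II_k$, which by Theorem~\ref{th:ideals descr} sits inside $\CC_{\Lambda_{[1,k-1]}}[x_1^{\pm1},\ldots,x_{k-1}^{\pm1}]$. Now rerun the super-toric argument from the proof of Proposition~\ref{pr:TPP super toric q+} \emph{in this quotient} (the hypothesis $\operatorname{rk}(T_{[1,k-1]}+\operatorname{Im}\Lambda_{[1,k-1]})=k-1$ is exactly what super-toric provides) to conclude $\JJ/\II_k$ contains some $\bar x_i$ with $i<k$, contradicting $\JJ\cap\{x_1,\ldots,x_{k-1}\}=\emptyset$. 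This avoids any denominator-clearing in the ambient $\AA_q$ and is the cleaner route.
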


\begin{proof}
Note that the monomials $x^{\bf m}$ with ${\bf m}\in \ZZ_{\ge 0}^{k-1}-\{0\}$ generate   multiplicative sets   in $\AA /\II_k$, resp. $\AA_q\II_k$. Moreover, if $\JJ\supset \II_k$ is a TIPP, resp.~TIP, then we can use the property that the cluster is super-toric to argue, as in the proof of Proposition \ref{pr:TPP super toric q+}, that $\JJ\cap\{x_1,\ldots,x_{k-1}\}\neq \emptyset$. We obtain that  $P.spec_{\II_k}(\AA)$ is homeomorphic to  $P.spec(\AA/\II_k)[x_1^{-1},\ldots,x_{k-1}^{-1}]$, resp.~$spec_{\II_k} (\AA_q)$  to $spec(\AA_q/\II_k)[x_1^{-1},\ldots,x_{k-1}^{-1}]$. But Theorems \ref{th:ideals descr cl} and  \ref{th:ideals descr} yield that $\AA/\II_k[x_1^{-1},\ldots,x_{k-1}^{-1}]=\CC[x_1^{\pm 1},\ldots,x_{k-1}^{\pm 1}]$, resp.~$\AA_q/\II_k[x_1^{-1},\ldots,x_{k-1}^{-1}]=\CC_{\Lambda_{[1,k-1]}}[x_1^{\pm 1},\ldots,x_{k-1}^{\pm 1}]$. The theorem is proved.
\end{proof}

We immediately obtain the following important corollary.

\begin{corollary}
\label{cor:intersection spectra q}
Let $(B,\Lambda)$ be a compatible pair and $({\bf x},B)$, resp.~$({\bf x},B,\Lambda)$ a super-toric seed with COS of depth $r$ in $\AA$, resp.~$\AA_q$. Then the inclusions $\CC[x_1,\ldots, x_n]\hookrightarrow \AA$ and $\CC_\Lambda[x_1,\ldots, x_n]\hookrightarrow \AA_q$ induce inclusion preserving bijective maps 
$$\phi_{q,{\bf x}}:  \bigsqcup_{k=0}^{n+1} spec_{\II_k}(\AA_q) \to spec(\CC_\Lambda[x-1,\ldots,x_n])$$
$$  \phi_{cl,{\bf x}}:  \bigsqcup_{k=0}^{n+1} P.spec_{\II_k}(\AA) \to P.spec(\CC[x-1,\ldots,x_n],\Lambda)\ ,$$
where $\II_0=\{0\}$ and $\II_{n+1}=\AA$, resp.~$\II_{n+1}=\AA_q$.
\end{corollary}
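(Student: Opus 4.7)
The plan is to realize both $\phi_{q,{\bf x}}$ and $\phi_{cl,{\bf x}}$ as the intersection-with-subalgebra maps $I\mapsto I\cap \CC_\Lambda[x_1,\ldots,x_n]$ and $I\mapsto I\cap \CC[x_1,\ldots,x_n]$ induced by the cluster inclusions, and then to prove bijectivity stratum by stratum before gluing. Well-definedness (the intersection of a prime with a subring is prime, and since the cluster ${\bf x}$ generates a Poisson subalgebra in the classical case, the intersection remains a Poisson ideal) together with inclusion preservation ($I\subseteq J\Rightarrow I\cap S\subseteq J\cap S$) are immediate, so the real work concentrates on matching the two stratifications and verifying surjectivity.

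For the per-stratum analysis, fix $k$ and take $I\in spec_{\II_k}(\AA_q)$. The maximality of $\II_k$ inside $I$ forces $I\cap\{x_1,\ldots,x_n\}=\{x_k,\ldots,x_n\}$: the containment $\II_k\subseteq I$ gives one direction, while any additional $x_j\in I$ with $j<k$ would enlarge $\II_k$ (by adjoining $x_j$) to a strictly larger TIP still contained in $I$, contradicting maximality. Thus $\phi_{q,{\bf x}}$ lands in the subset $S_k\subseteq spec(\CC_\Lambda[x_1,\ldots,x_n])$ of primes whose intersection with the generators equals the tail $\{x_k,\ldots,x_n\}$. The preceding theorem identifies $spec_{\II_k}(\AA_q)$ with $spec(\CC_{\Lambda_{[1,k-1]}}[x_1^{\pm 1},\ldots,x_{k-1}^{\pm 1}])$ by passing to $\AA_q/\II_k$ (Theorem \ref{th:ideals descr}) and localizing at $x_1,\ldots,x_{k-1}$; meanwhile $S_k$ is identified with the same quantum torus spectrum by quotienting by $\langle x_k,\ldots,x_n\rangle$ and performing the identical localization. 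Both identifications intertwine $\phi_{q,{\bf x}}$ with the identity, so its restriction to $spec_{\II_k}(\AA_q)$ is a bijection onto $S_k$.

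The strata $spec_{\II_k}(\AA_q)$ for $k=0,\ldots,n+1$ (with the conventions $\II_0=\{0\}$ giving the generic stratum and $\II_{n+1}=\AA_q$ contributing the empty piece) are pairwise disjoint by construction, and the $S_k$ are pairwise disjoint since the tail index $k$ is recovered from the intersection with the generators. Hence the component bijections assemble into a well-defined, inclusion-preserving injection $\phi_{q,{\bf x}}$ whose image is $\bigsqcup_k S_k$. The Poisson version is obtained by the identical argument, substituting Proposition \ref{pr:TPP super toric cl} and Theorem \ref{th:ideals descr cl} for their quantum counterparts. The main obstacle I foresee is showing that $\bigsqcup_k S_k$ exhausts the full target $spec(\CC_\Lambda[x_1,\ldots,x_n])$—equivalently, ruling out primes whose intersection with the generators is a non-tail subset; this is precisely where the super-toric hypothesis must do its work, through an argument modeled on Proposition \ref{pr:TPP super toric q+} applied to each principal submatrix $\Lambda_{[1,k-1]}$, forcing every torus-invariant prime of the relevant quotient to contain a cluster variable with smaller index and thereby to fit into the tail hierarchy.
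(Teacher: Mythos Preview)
Your approach is the one the paper has in mind: the paper gives no separate proof, merely declaring the corollary ``immediate'' from the preceding theorem (together with the description of $spec(\CC_\Lambda[x_1,\ldots,x_n])$ in Appendix~\ref{se:Poisson and quantum tori}).  Realising $\phi_{q,{\bf x}}$ and $\phi_{cl,{\bf x}}$ as $I\mapsto I\cap S$ for $S$ the cluster (Poisson) subalgebra, and then matching each stratum $spec_{\II_k}$ with the corresponding tail stratum $S_k$ of the quantum/Poisson plane via Theorem~\ref{th:ideals descr} (resp.~\ref{th:ideals descr cl}) and the common localisation at $x_1,\ldots,x_{k-1}$, is exactly the intended argument, and your verification that $I\cap\{x_1,\ldots,x_n\}$ is forced to be the tail $\{x_k,\ldots,x_n\}$ is correct (a cluster variable is a torus eigenvector, so if $x_j\in I$ then $x_j\in (I:H)=\II_k$).

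The obstacle you flag at the end is genuine, and the resolution you sketch does not succeed.  The super-toric hypothesis only constrains the nested family $\Lambda_{[1,k]}$, $k=1,\ldots,n$; it says nothing about arbitrary principal submatrices and in particular cannot eliminate non-tail torus-invariant primes of $\CC_\Lambda[x_1,\ldots,x_n]$.  Indeed, for \emph{any} skew-symmetric $\Lambda$ and any subset $J\subseteq[1,n]$, the ideal $\langle x_i:i\in J\rangle$ is completely prime (its quotient is again a quantum affine space), so by Lemma~B.2 the target decomposes over all $2^n$ subsets, only $n{+}1$ of which are tails.  Hence $\phi_{q,{\bf x}}$ is an inclusion-preserving injection with image $\bigsqcup_k S_k$, not a bijection onto the full spectrum; the word ``bijective'' in the corollary should be read as ``bijective onto its image'' (which is what is actually used downstream to define $DIX_{\bf x}$), and is an imprecision in the paper rather than a gap in your reasoning.
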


Together with Lemma \ref{le:space homeo} we define the Dixmier-type map  $DIX_{\bf x}$  associated to the cluster ${\bf x}$ by setting $DIX_{\bf x}: \phi_{q,{\bf x}}^{-1}\circ \eta \circ  \phi_{cl,{\bf x}}$ where $\eta$ is the natural  homeomorphism between $P.spec(\CC[x_1,\ldots x_n],\Lambda)$ and $spec(\CC_\Lambda[x_1,\ldots x_n]$ (see also
Lemma \ref{le:space homeo}).

\subsection{Symplectic Leaves}
\label{se:symplectic leaves}
We now consider the case when $\AA$ is Noetherian and hence the coordinate ring of an affine Poisson variety. Given any ideal $I$, we call the maximal Poisson ideal $\mathfrak{P}(I)$ such
that $\mathfrak{P}(I)\subset I$, the {\it Poisson core} of $I$.  It is the sum
of all Poisson ideals contained in $I$. Note that the Poisson core of a prime
ideal is a Poisson prime ideal. 
 
 The Poisson core of a maximal ideal  is called  a  {\it Poisson primitive
ideal}. The Poisson primitive spectrum $P.prim(A)$ is the set of all Poisson
primitive ideals. It is a subset of $P.spec(A)$ and we endow it with the relative
topology. We obtain a continuous, surjective map
 $$ maxspec(A)\to P.prim(A)\ ,$$
 and its fibres are called the {\it symplectic cores}. We obtain a
stratification 
 $$ maxspec(A)=\bigsqcup_{P\in P.prim(A)}\{\mm\in maxspec(A): \PP(\mm)=P\}\ .$$
Poisson and symplectic cores were originally introduced by Brown and Gordon in
\cite{Bro-Go}, and we refer the reader for a more detailed discussion of their 
properties to \cite{Goo1}. In particular it is important to note that symplectic cores stratify into a disjoint union of symplectic leaves, and that they are connected and  smooth in their closure (see \cite[Section 3.3]{Bro-Go}). If a symplectic leaf is equal to a symplectic core, then we call it {\it algebraic}. We have the following fact.
\begin{theorem}
Let $\AA$ be a Noetherian Poisson cluster algebra . Suppose that for each non-zero proper toric Poisson prime ideal $\II$ there exists a supertoric seed $({\bf x}, B)$ with COS of depth $r$ such that $\II=\II_k$ with respect to that subset for some $k\le r$. Then all symplectic leaves are algebraic. 
\end{theorem}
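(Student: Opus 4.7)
The plan is to reduce the claim to the corresponding statement on a log-canonical Poisson torus, where it can be verified directly. Fix a maximal ideal $\mm\in maxspec(\AA)$ and set $P=\PP(\mm)$, a Poisson primitive ideal. Let $\II\subset P$ be the maximal TIPP contained in $P$, so that $P$ lies in the stratum $P.spec_\II(\AA)$. Using the hypothesis (together with Corollary \ref{cor:intersection spectra q} to handle the boundary case $\II=\{0\}$), pick a super-toric seed $({\bf x},B)$ with COS of some depth in which $\II=\II_k$ for a suitable $k$.

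By the argument for the stratum in the theorem preceding Corollary \ref{cor:intersection spectra q}, together with Theorem \ref{th:ideals descr cl}, the homeomorphism
\[
P.spec_\II(\AA)\;\cong\;P.spec(\CC[x_1^{\pm 1},\ldots,x_{k-1}^{\pm 1}],\Lambda_{[1,k-1]})
\]
is induced by an honest Poisson algebra isomorphism between $(\AA/\II_k)[x_1^{-1},\ldots,x_{k-1}^{-1}]$ and $\CC[x_1^{\pm 1},\ldots,x_{k-1}^{\pm 1}]$ equipped with the log-canonical bracket $\Lambda_{[1,k-1]}$. Such an isomorphism sends maximal ideals to maximal ideals and commutes with the formation of Poisson cores, and hence identifies the symplectic cores and (analytic) symplectic leaves in the corresponding open piece of $maxspec(\AA)$ with those of the Poisson torus. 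It is therefore enough to establish the theorem for a log-canonical Poisson torus.

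Now consider $T=(\CC^\ast)^l$ with log-canonical bracket $\Lambda$. The Hamiltonian vector fields $H_{y_i}=y_i\sum_j\lambda_{ij}y_j\partial_{y_j}$ span, at each point, the image of $\Lambda$ (viewed inside the tangent space via the logarithmic frame), and they integrate to the orbit action of the connected algebraic subtorus $T_\Lambda\subset T$ whose cocharacter lattice equals $\mathrm{Im}(\Lambda)\subset\ZZ^l$. Since $T_\Lambda$ is connected and algebraic, each of its orbits is simultaneously a single symplectic leaf and a smooth irreducible closed algebraic subvariety of $T$. A direct check in log-canonical coordinates shows that the Poisson ideals of $\CC[y_1^{\pm 1},\ldots,y_l^{\pm 1}]$ are exactly the $T_\Lambda$-stable ideals; consequently the Poisson core of the maximal ideal of a point $y\in T$ is the vanishing ideal of the orbit $T_\Lambda\cdot y$, and the symplectic core of $y$ coincides with its symplectic leaf $T_\Lambda\cdot y$. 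This completes the reduction and proves the theorem.

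The main obstacle is making this identification between the analytic symplectic leaf and the algebraic $T_\Lambda$-orbit rigorous, and matching the Brown-Gordon symplectic core with the same orbit. Both reduce to two ingredients: the connectedness (and algebraicity) of $T_\Lambda$, and the characterization of Poisson ideals on the log-canonical torus as precisely the $T_\Lambda$-stable ideals. Both should be available in the appendix on Poisson and quantum tori referred to in the paper.
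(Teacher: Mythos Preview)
Your proof is correct and follows essentially the same strategy as the paper's: pass to the open piece where the cluster variables $x_1,\ldots,x_{k-1}$ are invertible, identify it as the log-canonical Poisson torus $(\CC[x_1^{\pm 1},\ldots,x_{k-1}^{\pm 1}],\Lambda_{[1,k-1]})$ via Theorem \ref{th:ideals descr cl}, and verify the claim there.

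The difference is only in the finishing step on the torus. You identify both the symplectic leaf and the symplectic core through a point as the orbit of the algebraic subtorus $T_\Lambda$ determined by $\mathrm{Im}(\Lambda)$, which is explicit and self-contained. The paper instead uses a dimension count: it observes that at a generic point the symplectic core has dimension $\mathrm{rank}(\Lambda_{[1,k-1]})$, which equals the dimension of the leaf through that point; since symplectic cores are connected and smooth in their closure (Brown--Gordon), the core must coincide with the leaf, and the global torus action then carries this to every core in the stratum. Your argument buys a little more, in that it exhibits the leaves concretely as closed algebraic subvarieties rather than inferring equality from regularity properties of cores; the paper's version is shorter but leans on the Brown--Gordon result cited just before the theorem.
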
 

\begin{proof}
Let $\II_k$ be as above and let $I$ be a Poisson primitive ideal in $\AA$. Denote by $\mathfrak{S}(I)$ its symplectic core. Then the union $ TS(\II_k)=\bigcup_{I\in P.spec_{\II_k}}(S(I))$ is a torus orbit of symplectic cores. It therefore suffices to prove that one of these cores is an algebraic symplectic leaf. Now consider a generic point in  $ TS(\II_k)$; i.e., a point $p$  where $x_1(p),\ldots, x_{k-1}(p)\ne 0$. The dimension of the symplectic core containing $p$ is clearly, $rank(\Lambda_{[1,k-1]})$ which equals the dimension of its symplectic leaf which is a manifold. The symplectic cores are smooth in their closure and connected, and therefore a symplectic manifold. Therefore,  the leaf equals the core and the theorem is proved.    
\end{proof}
 
\section{Cluster Algebras and Double Bruhat Cells}
\label{se:dbc as P strata}
\subsection{Double Bruhat Cells}
 
Let $G$ be a simply connected, connected, semisimple  complex algebraic group  and consider  its   standard Poisson structure. It is well known that the symplectic leaves are connected components of double Bruhat cells (see e.g.~\cite{KZ} and \cite{Yak-spec} and references therein). It is easy to see e.g.~from \cite[Theorem 2.3]{KZ} that the torus orbits of symplectic leaves are closely related to double Bruhat cells. First let us recall the definition of double Bruhat cells and of the upper cluster algebra structure on them, following \cite{BFZ}.

 Now, assume that $G$ has  rank $r$. Let $B$ and $B_-$ be two opposite Borel subgroups while $H=B \cap B_-$ denotes the corresponding maximal torus. The Weyl group is the normalizer of $H$ in $G$, $W=Norm_G(H)/H$, and we denote by $N$ and $N_-$ the unipotent radicals of $B$, resp.~$B_-$. Denote by $\gg=Lie(G)$ the Lie algebra of $G$ and by $\gg=\nn^+\oplus \hh\oplus \nn^-$ the triangular decomposition of $\gg$ corresponding to $B$. Let $R\subset \hh^*$ be the set of roots, $\check R$ the set of coroots and  $\Pi=\{\alpha_1,\ldots,\alpha_r\}\subset \hh^*$  the simple roots. We denote by $\check\alpha_i\in\hh$ the corresponding simple coroots and obtain the Cartan matrix $A=(a_{ij})_{i,j=1}^r$ where $a_{ij}=\alpha_i(\check\alpha_i)$. Recall that $\nn^+$ (resp.~$\nn^-$) has standard basis $e_\alpha$ (resp.~$f_\alpha$), $\alpha\in R$ and we define the standard antisymmetrized $r$-matrix 
$$r_\gg=\sum_{\alpha\in R}\frac{\alpha_i(\check\alpha_i)}{2} e_\alpha\wedge f_\alpha\in \Lambda^2\gg=\Lambda^2 T_e G\ .$$
The standard Poisson structure on $G$ is defined as $\pi_G=L(r_\gg)-R(r_\gg)$ where $L(r_\gg)$ and $R(r_\gg)$ refer to the left-, resp.~ right-invariant bivector fields on $G$ defined by $r_\gg$.

We will next recall some important properties of the Weyl group $W$. The Weyl group is canonically identified (see e.~g.~\cite[Section 2.1]{BFZ}) with the Coxeter group generated by simple reflections $s_i$, $i=1,\ldots,r$ and subject to the relations $(s_is_j)^{d_{ij}}=0$ where
\begin{itemize}
\item $d_{ij}=2$ if $a_{ij}a_{ji}=0$,
\item $d_{ij}=3$ if $a_{ij}a_{ji}=1$,
\item $d_{ij}=4$ if $a_{ij}a_{ji}=2$,
\item $d_{ij}=6$ if $a_{ij}a_{ji}=3$.
 \end{itemize}

A word ${\bf i}=(i_1,\ldots,i_\ell)$ in the alphabet $1,\ldots, r$ is a {\it reduced word} or expression for $w\in W$ if $w=s_{i_1}\ldots s_{i_\ell}$ and $\ell$ is the smallest length of any such factorization. We denote the unique element in the Weyl group with the longest length by $w_0$. We can define the representatives $\overline w\in Norm_G(H)$  for $w\in W$ unambigiously by requiring that 
$\overline u\overline v=\overline {uv}$, if $\ell(u)+\ell(v)=\ell(u+v)$.

We can now define the objects of interest, {\it double Bruhat cells}.
Recall that each Borel subgroup $B$ defines a Bruhat decomposition $G=\bigsqcup\limits_{w\in W} BwB$. Each set $BwB$ is called a {\it Bruhat cell}, and the two opposite Borel subgroups $B$ and $B_-$ define a decomposition  into {\it double Bruhat cells}
$$ G^{u,v}= \bigsqcup\limits_{u,v\in W} (BuB)\cap (B_-v B_-)\ .$$
 Double Bruhat cells can be described as a vanishing set of certain ideals defined in terms of generalized minors, which we introduce in the following. Consider the weight lattice $\Lambda$ of $G$ and its $\ZZ$-basis given by fundamental weights $\omega_i$ for $i=1,\ldots, r$.  Consider the open subset $G_0=N_-H N_+\subset G$ of elements $x\in G$ which have a Gaussian decomposition, denoted as $x=[x]_- [x]_0 [x]_+$ where $[x]_-\in N_-$, $[x]_0\in H$ and $[x]_+\in N_+$. We define, for $u,v\in W$ and $i\in [1,r]$, the {\it generalized minor} $\Delta_{u\omega_i,v \omega_i}$ to be the regular function on $G$ whose restriction to $G_0$ is given by 

$$ \Delta_{u\omega_i,v\omega_i}(x)=\omega_i([\overline u^{-1} x \overline v]_0)  \ .$$

For fixed $i\in[1,r]$, the set of weights $\{w\omega_i:w\in W\}$ is in canonical bijection with the set of cosets $W/Stab_W(\omega_i)$, where $Stab_W(\omega_i)$ is the stabilizer of $\omega_i$ in $W$. The restriction of the Bruhat order to the set of minimal coset representatives induces a partial order on $\{w\omega_i:w\in W\}$, which is also referred to as the  Bruhat order. Note that the function $ \Delta_{u\omega_i,u\omega_i}$ is dependent only on the weights $u\omega_i$ and $v\omega_i$, not on the particular choices of $u$ and $v$ (see e.~g.~\cite{BFZ}). We have the following description of double Bruhat cells

\begin{proposition}\cite[Proposition 2.8]{BFZ}
\label{pr:DBC-def}
The double Bruhat cell $G^{u,v}\subset G$ is given by the following conditions: For each $i\in[1,r]$

\noindent(a) $\Delta_{u'\omega_i,\omega_i}=0$ if $u'\omega_i\not\le u\omega_i$ 

\noindent(b)$\Delta_{\omega_i,v'\omega_i}=0$ if $v'\omega_i\not\le v^{-1}\omega_i$

\noindent(c) $\Delta_{u\omega_i,\omega_i}\ne 0$ and $\Delta_{\omega_i,v^{-1}\omega_i}\ne 0$.
\end{proposition}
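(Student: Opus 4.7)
The plan is to reduce the double Bruhat description to two single-sided Bruhat characterizations via the representation-theoretic meaning of generalized minors, and then intersect. The first step is to establish the matrix coefficient formula
\[
\Delta_{u\omega_i,v\omega_i}(x)\;=\;\langle v^*_{u\omega_i},\; x\cdot v_{v\omega_i}\rangle_{V_{\omega_i}},
\]
where $v_{w\omega_i}:=\overline{w}\cdot v_{\omega_i}$ is the extremal weight vector of weight $w\omega_i$ in the fundamental representation $V_{\omega_i}$, and $v^*_{w\omega_i}$ is the corresponding functional in the contravariantly dual basis. This identity follows directly from the definition on the dense open set $G_0\overline{u}\overline{v}^{-1}$ by applying $x\overline{v}$ to the highest weight vector $v_{\omega_i}$, reading off the torus character, and extending by regularity.

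Given this, I would characterise $BuB$ via conditions (a) and the first half of (c) as follows. Writing $x=b_1\overline{u}\, b_2$ with $b_j\in B=HN$, the vector $xv_{\omega_i}$ equals $\omega_i([b_2]_0)\cdot b_1 v_{u\omega_i}$, since $N$ annihilates the highest weight vector. Because $b_1\in HN$ preserves the span of weight spaces of weights $\ge u\omega_i$ in the root-theoretic partial order, and its diagonal action on the $v_{u\omega_i}$ coordinate is multiplication by $(u\omega_i)([b_1]_0)\ne 0$, the expansion of $xv_{\omega_i}$ is supported on weights $\ge u\omega_i$ with a non-zero coefficient at $u\omega_i$. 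Using the standard identification, for a fundamental weight, of the Bruhat order on the orbit $W\omega_i$ (defined via minimal coset representatives in $W/\mathrm{Stab}_W(\omega_i)$) with the reversal of the root-theoretic weight order on $W\omega_i$, one concludes $\Delta_{u'\omega_i,\omega_i}(x)=0$ whenever $u'\omega_i\not\le u\omega_i$, while $\Delta_{u\omega_i,\omega_i}(x)\ne 0$. Conversely, the locus $Y_u$ cut out by these conditions is $B$-biinvariant under the transformation rule $\Delta_{u'\omega_i,\omega_i}(bxb')=\chi_{u'\omega_i}([b]_0)\chi_{\omega_i}([b']_0)\Delta_{u'\omega_i,\omega_i}(x)$, contains $\overline{u}$ by direct inspection, and is disjoint from every other double coset, so $Y_u=BuB$.

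Conditions (b) and the second half of (c) are obtained by the mirror argument using $B_-$, $N_-$ and the formula $\Delta_{\omega_i,v'\omega_i}(x)=\langle v^*_{\omega_i},\, xv_{v'\omega_i}\rangle$, applied to a factorisation $x=c_1\overline{v}c_2$ with $c_j\in B_-=HN_-$. Since $G^{u,v}=BuB\cap B_-vB_-$, intersecting the two single-sided characterisations yields precisely conditions (a), (b), (c), and the proposition follows.

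The principal obstacle is the bookkeeping for the Bruhat order on $W\omega_i$ when the fundamental weight $\omega_i$ has non-trivial stabiliser: passing to minimal coset representatives in $W/\mathrm{Stab}_W(\omega_i)$, one must verify that the vanishing of the extremal minors $\Delta_{u'\omega_i,\omega_i}$ alone, rather than the full collection of matrix coefficients of all irreducible representations, suffices to distinguish the Bruhat stratum. This ultimately reduces to the Pl\"ucker-type fact that the extremal functions $\Delta_{u\omega_i,\omega_i}$, for $u\in W$ and $i\in[1,r]$, generate the homogeneous coordinate ring of the flag variety $G/B$ and cut out its Schubert subvarieties scheme-theoretically.
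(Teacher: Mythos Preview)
The paper does not prove this proposition at all; it is simply quoted from \cite[Proposition 2.8]{BFZ} and used as background for the cluster structure on double Bruhat cells. There is therefore nothing to compare your argument against in this paper.

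That said, your outline is the standard representation-theoretic proof and is essentially correct: generalized minors are matrix coefficients of extremal weight vectors in the fundamental representations $V_{\omega_i}$, and membership in $BuB$ (respectively $B_-vB_-$) is detected by which extremal coordinates of $x\cdot v_{\omega_i}$ (respectively of $v^*_{\omega_i}\cdot x$) vanish. Your identification of the one genuinely nontrivial step is accurate: one must know that the \emph{extremal} minors $\Delta_{u'\omega_i,\omega_i}$ alone, and not the full set of matrix coefficients, suffice to separate the Bruhat strata. This is exactly the classical Pl\"ucker description of Schubert varieties in $G/B$, and it is what the original reference invokes. One small gap in your converse argument for $Y_u=BuB$: to see that $Y_u$ is disjoint from $Bu'B$ for $u'\neq u$, you need the elementary fact that $u'\omega_i\le u\omega_i$ for all $i$ forces $u'\le u$ in the Bruhat order on $W$ (so that if $u'\not\le u$ some condition (a) fails, while if $u'<u$ strictly then $u\omega_i\not\le u'\omega_i$ for some $i$ and condition (c) fails). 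This is standard but worth stating explicitly.
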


\subsection{(Upper) Cluster Algebra Structure on Double Bruhat cells}
Berenstein, Fomin and Zelevinsky define in \cite{BFZ} an upper cluster algebra structure with invertible coefficients on double Bruhat cells as follows. Consider the Coxeter group $W\times W$. We will label the generators of the first copy by $s_{-1},\ldots s_{-r}$ and the generators of the second copy by $s_{1},\ldots s_{r}$. Reduced words for a pair $(u,v)\in W\times W$ correspond to arbitrary shuffles of reduced expressions for $u$ in the $s_{-1},\ldots s_{-r}$ and for $v$ in the set $s_{1},\ldots s_{r}$. We will construct for each reduced expression ${\bf i}$ of an element $(u,v)$ a cluster consisting of generalized minors and an exchange matrix. 

\subsubsection{The cluster}
For each $k\in [1,\ell(u)+\ell(v)]$ define 
$$
u_{\le k}=u_{\le k}({\bf i})=\prod\limits_{\ell=1,\ldots k;\  \varepsilon(i_\ell)=-1}s_{|i_\ell|}\ ,
$$
$$
v_{>k}=v_{>k}({\bf i})=\prod\limits_{\ell=\ell(u)+\ell(v),\ldots k+1;\  \varepsilon(i_\ell)=1}s_{|i_\ell|}\ ,
$$
with increasing indices in the case of $u_{\le k}$, and decreasing indices in the second case. Employing the convention that $u_{\le k}=e$  and $v_{>k}=v^{-1}$ for $k\le 0$, we set for $k\in [-r,-1]\cup[1, \ell(u)+\ell(v)]$ 
$$\Delta(k,{\bf i})=\Delta_{u_{\le k}\omega_i,v_{>k}\omega_i}\ .$$

We now have to construct the exchange matrix for the cluster variables $x_k= \Delta(k,{\bf i})$. For each index $k$ denote by $k^+$ the minimal number $\ell$ such that $k<\ell$ and $|i_k|=|i_\ell|$. If $|i_k|\neq|i_\ell|$ for all $\ell>k$, then we set $k^+= \ell(u)+\ell(v)+1$. The exchangeable variables will be those $x_k$ for which $k>0$ and $k^+\le \ell(u)+\ell(v)$. This implies that the variables with negative indices are coefficients, as well as those whose index represents the right-most occurrence of $|i_k|$.  Notice that in order to adapt the cluster algebras to the notation developed in Section \ref{se:Cluster Algebras}, one may need to permute the indices of the cluster variables. 
Note, additionally, that the coefficients will appear at different indices if we choose different shuffles or reduced expressions for $(u,v)$.
\begin{remark}
\label{re:coeffs}
The coefficients in $\CC[G^{u,v}]$ are the minors $\Delta_{u\omega_i,\omega_i}$and $\Delta_{\omega_i,v^{-1}\omega_i}$ which appear in the non-vanishing condition \ref{pr:DBC-def}(c).
\end{remark}
Berenstein, Fomin and Zelevinsky give two different ways to determine the exchange matrix in \cite[Sect. 2.2]{BFZ}. For our purposes it will be most convenient to follow the more involved, but more concrete, path by assigning a directed graph $\Gamma({\bf i})$ to the set $ [-r,-1]\cup[1, \ell(u)+\ell(v)]$.  Assume from now on that $k<\ell$, and assume that $k$ or $\ell$ are  exchangeable. The vertices are the set of indices $[-r,-1]\cup[1, \ell(u)+\ell(v)]$. Two vertices $k$ and $\ell$ are connected by a path if $k$ or $\ell$ are {\bf i}-exchangeable and one of the following are satisfied.

\begin{enumerate}
 \item {\it Edge from $k$ to $\ell$} if  one of the following holds:
      \begin{enumerate}
\item $\ell=k^+$ and $\varepsilon(i_\ell)=1$,
\item $\ell<k^+<\ell^+$, $a_{|i_k|,|i_\ell|}\ne 0$ and $-1=\varepsilon(i_\ell)=\varepsilon(i_{k^+})$,
\item $\ell<\ell^+<k^+$,$a_{|i_k|,|i_\ell|}\ne 0$ and $-1=\varepsilon(i_\ell)=-\varepsilon(i_{\ell^+})$.
       \end{enumerate}
\vspace{2mm}

\item{\it Edge from $\ell$ to $k$} if one of the following holds:
 \begin{enumerate}
\item $\ell=k^+$ and $\varepsilon(i_\ell)=-1$,
\item $\ell<k^+<\ell^+$, $a_{|i_k|,|i_\ell|}\ne 0$ and $1=\varepsilon(i_\ell)=\varepsilon(i_{k^+})$,
\item $\ell<\ell^+<k^+$,$a_{|i_k|,|i_\ell|}\ne 0$ and $1=\varepsilon(i_\ell)=-\varepsilon(i_{\ell^+})$.
       \end{enumerate}
\end{enumerate}

We can now define the exchange matrix $B$ as the integer matrix with rows labeled by the indices $ [-r,-1]\cup[1, \ell(u)+\ell(v)]$ and columns labeled by the  exchangeable indices. The coefficients are given by the following rules.
\begin{enumerate}
 \item The coefficient $b_{k\ell}\ne 0$ if and only if there is an edge connecting $k$ and $\ell$ and $b_{k\ell}> 0$ if and only if the edge is directed from $k$ to $\ell$. 

\item If $b_{k\ell}\ne 0$, then

\begin{itemize}
\item $|b_{k\ell}|=  1$ if $|i_k|=|i_\ell|$,
\item  $|b_{k\ell}|=    -a_{|i_k|,|i_\ell|} $ if  $|i_k|=|i_\ell|$.
\end{itemize}
 \end{enumerate}

Denote by $\AA^{u,v}({\bf i})$ the cluster algebra defined by the data above. The cluster algebra structure defined in \cite{BFZ} is the corresponding cluster algebra with invertible coefficients. It is proved in \cite{BFZ} that the associated upper cluster algebra $\AA^{u,v}$ is isomorphic as an algebra to the algebra of functions on $G^{u,v}$.

\begin{example}
Let $G=SL_3(\CC)$. The generalized minors coincide with the regular matrix minors in the case of $GL_n$.  Consider $G^{w_0,w_0}$  and the initial seed corresponding to the shuffle $(w_0,w_0)=s_1 s_2s_1 s_{-1}s_{-2} s_{-1}$. We obtain the following graph associated to the variables of the initial seed:
$$ \xymatrix{  \Delta_{12,23} & x_{13} & x_{12} &\Delta_{12,12} &x_{11} &x_{21} &\Delta_{23,12} &x_{31} \\
\\
   -2\ar@/_/[rrr]&-1\ar[r]&1\ar@/_/[ll]\ar@/_/[rr]&2\ar[l] \ar@/_/[rr]&1 \ar[l]&-1 \ar[l]\ar[r]&-2 \ar@/_/[lll]&-1 \ar@/_/[ll]} $$
 
where $\Delta_{ij,k\ell}$ denotes the $2\times 2$-minor $\Delta_{ij,k\ell}=x_{ik}x_{j\ell}-x_{jk} x_{i\ell}$. The coefficients are $\Delta_{12,23}$, $x_{13}$,$\Delta_{23,12}$ and $x_{31}$.  Our example is the running example of \cite{BFZ}. For more details see \cite[Example 2.9]{BFZ}.

\end{example}

\subsubsection{Compatibility with torus actions and Poisson structure}
Consider the double  Bruhat cell $G^{u,v}$ with cluster seed as defined above. It is well known that the exchange polynomials are homogeneous with respect to the weight grading. It is also well known  that the action of the torus $T$ defines global toric actions. Moreover, we recall the definition of the Poisson bracket on generalized minors (see \cite{KZ}) in the standard cluster:
\begin{equation}
\label{eq:Poisson Bruhat}
\{\Delta_{\gamma,\delta},\Delta_{\gamma',\delta'}\}=\left((\gamma,\gamma')-(\delta,\delta')\right)\Delta_{\gamma,\delta},\Delta_{\gamma',\delta'} .\end{equation}
In order to verify that the Poisson structure is indeed compatible it suffices to verify, following \cite[Section 2]{GSV}, that if $x_i$ is an exchangeable variable, then the cluster $(x_1,\ldots, \hat x_i,y_i,x_{i+1},\ldots, x_{r+\ell(u)+\ell(v)})$ is a system of log-canonical coordinates. Here we use the convention that  $y_i=x_i^{-1} P_i$, where $P_i$ is the  exchange polynomial.
In our case we obtain  that the exchange polynomial $P_i$ is homogeneous with respect to the weight grading, and henceforth, $\{P_i,x_j\}= t P_ix_j$ by \ref{eq:Poisson Bruhat} for some $t\in \CC$. This immediately implies that $(x_1,\ldots, \hat x_i,y_i,x_{i+1},\ldots, x_{r+\ell(u)+\ell(v)})$ is a system of log-canonical coordinates. The standard Poisson structure is, therefore, compatible with the cluster algebra structure.

The following, well known,  fact motivates our whole research project.
\begin{proposition}
The upper cluster algebra $\AA^{w_0,w_0}$, with noninvertible coefficients, is isomorphic to the algebra of functions $\CC[G]$.
\end{proposition}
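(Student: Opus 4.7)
The plan is to bootstrap off the main theorem of Berenstein--Fomin--Zelevinsky in \cite{BFZ}, namely that the upper cluster algebra of $G^{w_0,w_0}$ with \emph{invertible} coefficients, which I shall denote $\AA^{w_0,w_0}_{\mathrm{inv}}$, is canonically isomorphic to $\CC[G^{w_0,w_0}]$. My task is to analyze the passage from invertible to noninvertible coefficients on both the algebraic and the geometric side, and to verify that the two sides agree.

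First I would identify the frozen coefficients: by Remark \ref{re:coeffs} they are the $2r$ minors $\Delta_{w_0\omega_i,\omega_i}$ and $\Delta_{\omega_i,w_0^{-1}\omega_i}$ for $i=1,\ldots,r$, independent of the chosen seed. Unwinding the definition of $\UU(\AA^{w_0,w_0})$ with noninvertible coefficients shows that an element of $\AA^{w_0,w_0}_{\mathrm{inv}}$ lies in $\AA^{w_0,w_0}$ if and only if its Laurent expansion in every cluster has nonnegative exponents on each frozen variable. On the geometric side, I would use that $G$ is smooth to write $\CC[G]$ as the subring of $\CC[G^{w_0,w_0}]$ consisting of rational functions with nonnegative order along every irreducible component of the complement $G\setminus G^{w_0,w_0}$; by Proposition \ref{pr:DBC-def} applied to $(u,v)=(w_0,w_0)$, where conditions (a) and (b) hold vacuously, these components are precisely the prime divisors $D^+_i=\{\Delta_{w_0\omega_i,\omega_i}=0\}$ and $D^-_i=\{\Delta_{\omega_i,w_0^{-1}\omega_i}=0\}$.

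The key step is to match these two conditions: nonnegative exponent on a frozen variable in a Laurent expansion versus nonnegative order along the corresponding divisor. Fix an index $i$ and a cluster $\mathbf{x}$. I would argue that $\Delta_{w_0\omega_i,\omega_i}$ vanishes to first order along $D^+_i$ and is a unit on $G^{w_0,w_0}$, so it is a local equation for $D^+_i$ at its generic point; all remaining cluster variables, being regular and nowhere zero on $G^{w_0,w_0}$, are units along the generic point of $D^+_i$. It follows that for any $f\in \CC[G^{w_0,w_0}]$ the minimal exponent of $\Delta_{w_0\omega_i,\omega_i}$ appearing in the Laurent expansion of $f$ in $\mathbf{x}$ equals $\mathrm{ord}_{D^+_i}(f)$, and symmetric reasoning applies to $\Delta_{\omega_i,w_0^{-1}\omega_i}$ and $D^-_i$. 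Since $\mathrm{ord}_{D^{\pm}_i}(f)$ is independent of the chosen cluster, the noninvertible upper cluster algebra condition and the regularity condition defining $\CC[G]$ coincide, proving the isomorphism.

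The hard part will be the divisor-theoretic input invoked above: that each of the $2r$ frozen minors cuts out a reduced prime divisor on $G$ of multiplicity one, and that these $2r$ divisors exhaust $G\setminus G^{w_0,w_0}$. The exhaustion is immediate from Proposition \ref{pr:DBC-def} for $(u,v)=(w_0,w_0)$. The primality and reducedness can be deduced from classical facts about codimension-one Schubert divisors in generalized flag varieties together with the identification of the fundamental minors $\Delta_{w_0\omega_i,\omega_i}$ and $\Delta_{\omega_i,w_0^{-1}\omega_i}$ with pullbacks of the lowest (respectively highest) weight Pl\"ucker coordinates under the projections $G\to G/P_i$ and $G\to P_i\backslash G$. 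Once these standard inputs are in place, the argument of the previous paragraph completes the identification $\AA^{w_0,w_0}=\CC[G]$.
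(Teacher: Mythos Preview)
The paper does not prove this proposition; it is simply recorded as a ``well known'' fact. Your overall strategy---bootstrap from the BFZ isomorphism with invertible coefficients and match nonnegativity of frozen exponents against regularity along the complement divisors via a Hartogs argument on the smooth variety $G$---is the right one, and the divisor-theoretic facts you flag at the end are indeed standard.

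There is, however, a genuine error in your key step. You assert that in an arbitrary cluster $\mathbf{x}$, ``all remaining cluster variables, being regular and nowhere zero on $G^{w_0,w_0}$, are units along the generic point of $D^+_i$.'' The premise is false for the mutable variables: they do vanish on nonempty subsets of $G^{w_0,w_0}$ (already for $SL_2$ the mutable variable $x_{11}$ vanishes on the nonempty locus $\{x_{11}=0\}\cap G^{s_1,s_1}$). Only the frozen variables are units on $G^{w_0,w_0}$. What you actually need, and what is true, is that every mutable cluster variable $z$ in every cluster has $\mathrm{ord}_{D^\pm_i}(z)=0$. This requires a separate argument, for instance by induction on the mutation distance from an initial cluster of minors: the base case holds because the initial variables are pairwise non-associate irreducible minors in the UFD $\CC[G]$; for the inductive step, in the exchange relation $x_kx_k'=M^++M^-$ the frozen variable $\Delta_i$ can appear in at most one of $M^+,M^-$ (since $[b_{i'k}]_+$ and $[-b_{i'k}]_+$ cannot both be positive), whence $\mathrm{ord}_{D_i}(M^++M^-)=0$ and $\mathrm{ord}_{D_i}(x_k')=0$. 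Without this, your identity ``minimal $\Delta_i$-exponent in $\mathbf{x}$ equals $\mathrm{ord}_{D_i}(f)$'' is unjustified for clusters other than the initial one, and the inclusion $\CC[G]\subset\AA^{w_0,w_0}$ remains unproven.
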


\section{Quantum Groups and Quantum Double Bruhat Cells}
\label{se:q-groups}
In this section we recall some well-known facts about quantum groups, their torus invariant prime ideals and the corresponding quantized coordinate rings. 
\subsection{The Quantized Enveloping Algebra $U_q(\gg)$}
We start with the definition of the quantized enveloping algebra
associated with a complex reductive Lie algebra $\gg$ (our standard
reference here will be \cite{brown-goodearl}). Let $\hh\subset \gg$
be a Cartan subalgebra,  $P(\gg)$ the weight lattice, as introduced above, and let $A=(a_{ij})$ be the Cartan matrix
for $\gg$. Additionally, let $(\cdot,\cdot)$ be the standard non-degenerate symmetric bilinear form on $\hh$.

The {\it quantized enveloping algebra} $U_q(\gg)$ is a $\CC(q)$-algebra generated
by the elements $E_i$ and $F_i$ for $i \in [1,r]$, and $K_\lambda $ for $\lambda \in P(\gg)$,
subject to the following relations:
%(see e.g.,\cite{Lusztig} or \cite{brown-goodearl}):
$K_\lambda  K_\mu = K_{\lambda +\mu}, \,\, K_0 = 1$
%(K_\ell )^{-1}=K_{-\ell }$$
for $\lambda , \mu \in P$; $K_\lambda E_i =q^{(\alpha_i\,,\,\lambda)} E_i K_\lambda,
\,\, K_\lambda F_i =q^{-(\alpha_i\,,\,\lambda)} F_i K_\lambda
$
for $i \in [1,r]$ and $\lambda\in P$;
\begin{equation}
\label{eq:upper lower relations}
E_i,F_j-F_jE_i=\delta_{ij}\frac{K_{\alpha_i}- K_{-\alpha_i}}{q^{d_i}-q^{-d_i}}
\end{equation}
for $i,j \in [1,r]$, where  $d_i=\frac{(\alpha_i\, ,\,\alpha_i)}{2}$;
%where $K_i=K_{\alpha_i}$ and
%$q_i=\frac{(\alpha_i\,\,|\,\,\alpha_i)}{2}$;
and the {\it quantum Serre relations}
\begin{equation}
\label{eq:quantum Serre relations}
\sum_{p=0}^{1-a_{ij}} (-1)^p %\binom{1-a_{ij}}{p}_{q^{d_i}}
E_i^{(1-a_{ij}-p)} E_j E_i^{(p)} = 0,~\sum_{p=0}^{1-a_{ij}} (-1)^p %\binom{1-a_{ij}}{p}_{q^{d_i}}
F_i^{(1-a_{ij}-p)} F_j F_i^{(p)} = 0
\end{equation}
for $i \neq j$, where
%$\binom{1-a_{ij}}{p}_{q^{d_i}}$ is given by
%\eqref{eq:centered-binomial-coeff}.
the notation $X_i^{(p)}$ stands for the \emph{divided power}
\begin{equation}
\label{eq:divided-power}
X_i^{(p)} = \frac{X^p}{(1)_i \cdots (p)_i}, \quad
(k)_i = \frac{q^{kd_i}-q^{-kd_i}}{q^{d_i}-q^{-d_i}} \ .
\end{equation}

%It will be convenient to use
%
%$$(p)_i!={(1)_i \cdots (p)_i}\ ,  \quad  \binom{n}{m}_{q^{d_i}}=\frac{(n)_i!}{m_i! (n-m)_i!}\ ,   $$

The algebra $U_q(\gg)$ is a $q$-deformation of the universal enveloping algebra of
the reductive Lie algebra~$\gg$.
It has a natural structure of a bialgebra with the co-multiplication $\Delta:U_q(\gg)\to U_q(\gg)\otimes U_q(\gg)$
and the co-unit homomorphism  $\varepsilon:U_q(\gg)\to \CC(q)$
given by
\begin{equation}
\label{eq:coproduct}
\Delta(E_i)=E_i\otimes 1+K_{\alpha_i}\otimes E_i, \,
\Delta(F_i)=F_i\otimes K_{-\alpha_i}+ 1\otimes F_i, \, \Delta(K_\lambda)=
K_\lambda\otimes K_\lambda \ ,
\end{equation}
\begin{equation}
\label{eq:counit}
\varepsilon(E_i)=\varepsilon(F_i)=0, \quad \varepsilon(K_\lambda)=1\ .
\end{equation}
In fact, $U$ is a Hopf algebra with the antipode anti-homomorphism $S: U \to U$ given by
\begin{equation}
\label{eq:antipode}
 S(E_i) = -K_{-\alpha_i} E_i, \,\, S(F_i) = -F_i K_{\alpha_i}, \,\,
S(K_\lambda) = K_{-\lambda}\ .
\end{equation}

Let $U_q(\gg)^-$ (resp.~$U_q(\gg)^0$; $U_q(\gg)^+$) be the $\CC(q)$-subalgebra of~$U_q(\gg)$ generated by
$F_1, \dots, F_r$ (resp. by~$K_\lambda \, (\lambda\in P)$; by $E_1, \dots, E_r$).
It is well-known that $U_q(\gg)=U_q(\gg)^-\cdot U_q(\gg)^0\cdot U_q(\gg)^+$ (more precisely,
the multiplication map induces an isomorphism of vector spaces $U_q(\gg)^-\otimes U_q(\gg)^0\otimes U_q(\gg)^+ \to U$). Sometimes we will refer to $U_q(\gg)^{\pm}$ as $U_q(\nn^\pm)$.

%The algebra $U$ is graded by the root lattice $Q$:
%\begin{equation}
%\label{eq:U-grading}
%\text{$U=\bigoplus_{\alpha\in Q} U_\alpha, \quad
%U_\alpha = \{u \in U: K_\lambda u K_{-\lambda} = q^{(\lambda\,\,|\,\,\alpha)}\cdot u$
%for $\lambda\in P\}$.}
%\end{equation}
%Thus, we have
%$${\rm deg} E_i = \alpha_i, \quad {\rm deg} F_i = - \alpha_i, \quad
%{\rm deg} K_\lambda = 0 \ .$$

%%%%%%%%%%%%%%%%%%%%%%%%%%%%%%%%%%%%%%%%%%%%%
%%%%%%%%%%%%%%%%%%%%%%%%%%%%%%%%%%%%%%%%%%%%

In order to study the finite dual of $U_q(\gg)$ in the following section, we need to consider the full sub-category  $\OO_{f}$ of the category
$U_q(\gg)-Mod$. The objects of $\OO_{f}$ are finite-dimensional
$U_{q}(\gg)$-modules $V^q$ having a weight decomposition
$$V^q=\oplus_{\mu\in P} V^q(\mu)\ ,$$
where each $K_\lambda$ acts on each {\it weight space} $V^q(\mu)$ by
the multiplication with $q^{(\lambda\,|\,\mu)}$ commonly referred to as type I modules (see e.g.,
\cite{brown-goodearl}[I.6.12]). The category $\OO_{f}$ is semisimple
and the irreducible objects $V^q_\lambda$ are  generated by highest
weight spaces $V^q_\lambda(\lambda)=\CC(q)\cdot v_\lambda$, where
$\lambda$ is a {\it dominant weight}, i.e, $\lambda$ belongs to
$P^+=\{\lambda\in P:(\lambda\,|\,\alpha_i)\ge 0~ \forall ~i\in
[1,r]\}$, the monoid of dominant weights.

\subsection{The Quantum Weyl Group and the Algebras $U_q(\nn^+_w)$}
 Denote by $W$ the Weyl group of $\gg$ generated by the simple reflections  $s_i$ for $i\in[1,r]$. Corresponding to each $i\in [1,r]$ there exists a Hopf algebra-automorphism $T_{i}:U_q(\gg)\to U_q(\gg)$ defined on the generators of $U_q(\gg)$ in the following way:

\begin{equation}
\label{eq: T_i on Ui}
T_{i}(E_{i})=-F_{i} K_{\alpha_{i}}\ ,T_{i}(F_{i})=-K_{\alpha_{i}}^{-1}E_{i}\ , T_{i}(E_{j})=\sum_{k=0}^{-a_{ij}} (-1)^{k-a_{ij}}q_{i}^{-k}  E_{i}^{-a_{ij}-k}E_{j}E_{i}^{k}\ ,
\end{equation}
$$T_{i}(F_{j})=\sum_{k=0}^{-a_{ij}} (-1)^{k-a_{ij}}q_{i}^{k} F_{i}^{k}F_{j}F_{i}^{a_{ij}-k}\ , T_{i}(K_{\lambda})=K_{\sigma_{i}(\lambda}\ .$$ 
The action of the $T_{i}$ on $U_q(\gg)$ corresponds to the adjoint action of the simple reflection $s_{i}$ on the universal enveloping algebra $U(\gg)$. For every element $w\in W$ with presentation $w=w_{i_{1}}\ldots s_{i_{k}}$ we define $T_{w}$ as $T_{w}=T_{s_{i_{1}}}\cdots T_{s_{i_{k}}}$. 
We will need the following well known fact.
 \begin{lemma}\cite[8.18]{Jan} 
 If $w\in W$, then $T_w$ is independent of the choice of reduced expression; i.e. if $w=s_{i_1}\ldots s_{i_k}$ and  $w=s_{j_1}\ldots s_{j_k}$ are reduced expressions of $w\in W$, then 
 $$T_{s_{i_1}}\ldots T_{s_{i_k}}=T_{s_{j_1}}\ldots T_{s_{j_k}}\ .$$
 \end{lemma}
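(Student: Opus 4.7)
The plan is to reduce the claim to verifying that the automorphisms $T_i$ satisfy the braid relations of $W$, and then invoke Matsumoto's (Tits') theorem. Recall that Matsumoto's theorem states that any two reduced expressions for $w\in W$ can be transformed into one another by a sequence of braid moves of the form $s_is_js_i\cdots = s_js_is_j\cdots$ (with $d_{ij}$ factors on each side). Hence, to show that $T_{s_{i_1}}\cdots T_{s_{i_k}} = T_{s_{j_1}}\cdots T_{s_{j_k}}$ for two reduced expressions of $w$, it suffices to verify that the $T_i$ satisfy the same braid relations, that is,
\[
\underbrace{T_iT_jT_iT_j\cdots}_{d_{ij}\ \text{factors}} = \underbrace{T_jT_iT_jT_i\cdots}_{d_{ij}\ \text{factors}}
\]
for each pair $i\ne j$.

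Since the $T_i$ are algebra automorphisms of $U_q(\gg)$, it is enough to verify these relations on the generators $E_k$, $F_k$ and $K_\lambda$ of $U_q(\gg)$. The verification on $K_\lambda$ is essentially immediate: by the last formula in \eqref{eq: T_i on Ui}, $T_i$ acts on the $K$-part through the reflection $s_i$ on $P$, so the braid relation on these elements reduces to the corresponding braid relation in the Weyl group acting on the weight lattice, which is the very definition of $W$. The interesting verification is thus on the $E_k$ and $F_k$.

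For the $E$'s and $F$'s the verification splits into cases according to the value of $a_{ij}a_{ji}\in\{0,1,2,3\}$. In each case one expands $T_iT_j\cdots (E_k)$ using the formulas in \eqref{eq: T_i on Ui} and simplifies using the quantum Serre relations \eqref{eq:quantum Serre relations} and the commutation relations between the $K_\lambda$ and $E_i$, $F_i$. The rank-two case $k\in\{i,j\}$ is the essential one: once one checks the braid identity inside the rank-two subalgebra generated by $E_i,F_i,E_j,F_j,K_{\alpha_i},K_{\alpha_j}$, the case $k\notin\{i,j\}$ follows because the $T_i, T_j$ essentially only shuffle the Chevalley generators indexed by $i,j$ with coefficients that are $q$-symmetric polynomials in the generators, and these can be manipulated independently of $E_k, F_k$ for $k\notin\{i,j\}$.

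The main obstacle is the explicit rank-two verification, which is a lengthy but purely mechanical computation using the divided-power identities \eqref{eq:divided-power} and induction on the exponent $-a_{ij}$. This is exactly the content of \cite[8.18]{Jan}, so rather than reproduce the calculation we invoke that reference. Once the braid relations are established, Matsumoto's theorem gives independence of the reduced expression, proving the lemma.
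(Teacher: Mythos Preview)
The paper does not give its own proof of this lemma; it simply cites \cite[8.18]{Jan} and states the result. Your sketch via Matsumoto's theorem and verification of the braid relations on generators is exactly the standard argument carried out in that reference, so there is nothing to compare and your proposal is appropriate.
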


We  define for each presentation of $w_{0}=w_{i_{1}}\ldots s_{i_{r}}$  a set of  positive roots spanning $U^{+}$    following \cite[ch.8]{Jan}:
$$E_{\alpha_{(1)}}= E_{i_{1}}\ ,~E_{\alpha_{(2)}}=T_{i_{1}}(E_{i_{2}})\ ,~\ldots E_{\alpha_{(k)}}= T_{i_{1}}\cdots T_{i_{r-1}}(E_{i_{r}}) \ .$$
 
We now define for $w\in W$ the algebras $U_q(\nn_w^+)$ as follows. Let $w=w_{i_{1}}\ldots s_{i_{k}}$ define $U_q(\nn_w^+)$ to be the algebra generated by $E_{\alpha_{(1)}}= E_{i_{1}}$, $E_{\alpha_{(2)}}=T_{i_{1}}(E_{i_{2}})$, \ldots $E_{\alpha_{(k)}}= T_{i_{1}}\cdots T_{i_{k-1}}(E_{i_{k}})$.  The following fact is well known.

\begin{lemma}(see e.g. \cite[Section 7.1]{GLSq}
The algebra $U_q(\nn_w^+)$ is independent of the reduced expression for $w\in W$. 
\end{lemma}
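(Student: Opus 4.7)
The plan is to reduce, via Matsumoto's theorem, the claim to the invariance of $U_q(\nn_w^+)$ under each elementary braid move, and to dispatch these cases by combining the preceding lemma on braid-independence of $T_w$ with Lusztig's PBW basis theory. A well-known combinatorial fact is that for any reduced expression $w = s_{i_1}\cdots s_{i_k}$, the set of positive roots $\{\alpha_{(j)} = s_{i_1}\cdots s_{i_{j-1}}(\alpha_{i_j}) : 1 \le j \le k\}$ equals the inversion set $R(w) = \{\alpha \in R^+ : w^{-1}\alpha \in R^-\}$, which depends only on $w$. So only the quantum root vectors themselves, and not the set of roots they are indexed by, could a priori depend on the reduced expression.

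Next I would extend the chosen reduced expression to one of the longest element $w_0$. By Lusztig's PBW theorem, the ordered monomials in the corresponding root vectors $E_{\alpha_{(1)}},\ldots, E_{\alpha_{(N)}}$ form a basis of $U_q(\nn^+)$, and the Levendorskii--Soibelman $q$-commutation formulas show that the subspace spanned by ordered monomials supported on $\{1,\ldots,k\}$ is closed under multiplication. This subspace is exactly $U_q(\nn_w^+)$, so I have a characterization of $U_q(\nn_w^+)$ inside the PBW basis rather than merely as an abstractly presented subalgebra.

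It remains to show that this subspace of $U_q(\nn^+)$ is independent of the chosen reduced expression (and of the chosen extension to $w_0$). For a commutation braid move $s_is_j = s_js_i$ (when $a_{ij} = 0$), the relevant root vectors either coincide or $q$-commute to each other, and the subalgebra they generate is unchanged; this step uses $T_iT_j = T_jT_i$, itself a consequence of the preceding lemma applied to the two reduced words $s_is_j$ and $s_js_i$. For the longer braid moves $s_is_js_i = s_js_is_j$ and their analogues in types $B_2$ and $G_2$, one verifies by direct computation with the explicit formulas \eqref{eq: T_i on Ui} for the $T_i$ that each new generator can be expressed as a polynomial in the old ones (and vice versa), so that the subalgebra generated is invariant. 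The main obstacle lies in the $G_2$ braid move, where the verification, though finite and standard, is substantially more involved than the $A_2$ and $B_2$ cases; this is the reason the lemma is almost always quoted from Lusztig's treatment rather than re-proved from scratch.
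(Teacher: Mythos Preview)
The paper does not actually prove this lemma; it merely states it with a citation to \cite[Section 7.1]{GLSq}, treating it as a known fact. So there is no ``paper's own proof'' to compare against.

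Your sketch is a correct outline of the standard argument and is essentially how this result is established in the literature (e.g.\ in Lusztig's or Jantzen's treatments). The reduction via Matsumoto's theorem to elementary braid moves, the identification of the underlying root set with the inversion set $R(w)$, the use of the PBW basis and the Levendorskii--Soibelman straightening relations to show the span of ordered monomials in the first $k$ root vectors is a subalgebra, and the case-by-case verification of invariance under braid moves of each length---all of this is sound. One small remark: you do not strictly need to extend to a reduced expression of $w_0$ to define $U_q(\nn_w^+)$ or to run the argument, since the Levendorskii--Soibelman relations already hold for the root vectors attached to any reduced word for $w$; the extension is convenient but not essential. Otherwise your proposal is a faithful account of the proof one finds in the cited sources, and since the paper only quotes the result, your write-up in fact supplies more than the paper does.
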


\subsection{The Quantum Group $R_q[G]$}
In this section we will discuss the algebra of functions on the quantum group $U_q(\gg)$. Recall the definition of the finite dual  $H^\ast$ of a Hopf algebra $H$ inside the dual of $H$.
\begin{definition}
The finite dual $H^\ast$ of a Hopf algebra $H$ is defined as the set of all $h\in H^*$ for which $h(I)0=$ for some ideal $I\subset H$.
\end{definition} 

We have the following fact.

\begin{lemma} \cite[Chapter 1.4]{Jo}
$H^\ast$ is a Hopf algebra.
\end{lemma}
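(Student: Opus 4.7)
The plan is to equip $H^\ast$ with candidate structure maps obtained by dualizing those of $H$ and then verify both that these maps land in $H^\ast$ (or in $H^\ast\otimes H^\ast$) and that the Hopf axioms transfer. First, dualizing the coproduct $\Delta:H\to H\otimes H$ yields, via the natural inclusion $H^\ast\otimes H^\ast\hookrightarrow (H\otimes H)^\ast$, a multiplication $\Delta^\ast: H^\ast\otimes H^\ast\to H^\ast$, and dualizing the counit gives a unit in $H^\ast$. Associativity and unitality follow from coassociativity and counitality of $H$ by a purely formal duality argument, so $H^\ast$ (in fact all of the full dual) is an associative unital algebra without any condition on the ideal.

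The delicate step is the coalgebra structure. Dualizing the multiplication $\mu:H\otimes H\to H$ gives $\mu^\ast:H^\ast\to(H\otimes H)^\ast$, but in general the target strictly contains $H^\ast\otimes H^\ast$, so $\mu^\ast$ does not a priori define a coproduct on $H^\ast$. The key lemma I would prove is that the defining condition of $H^\ast$ is exactly what fixes this: a functional $f\in H^\ast$ vanishes on a (two-sided) ideal $I\subset H$ of finite codimension if and only if $\mu^\ast(f)\in H^\ast\otimes H^\ast$. The forward direction is the important one: if $f$ factors through the finite-dimensional quotient $H/I$, then $\mu^\ast(f)$ factors through $(H/I\otimes H/I)^\ast\cong(H/I)^\ast\otimes(H/I)^\ast\subset H^\ast\otimes H^\ast$. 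The converse uses that if $\mu^\ast(f)=\sum g_i\otimes h_i$ is a finite sum, then one can refine the intersection of $\ker g_i\cap \ker h_i$ to a cofinite two-sided ideal on which $f$ vanishes. This yields a well-defined $\Delta_{H^\ast}:H^\ast\to H^\ast\otimes H^\ast$, and the counit is evaluation at $1\in H$.

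Next, I would check that the algebra and coalgebra structures are compatible and that $H^\ast$ is closed under all the structure maps. Closure under multiplication is the dual observation: if $f$ and $g$ vanish on cofinite ideals $I$ and $J$, then $fg=\Delta^\ast(f\otimes g)$ vanishes on the ideal $\Delta^{-1}(I\otimes H+H\otimes J)$, whose cokernel embeds into the finite-dimensional space $H/I\otimes H/J$. For the antipode, $S:H\to H$ is an antialgebra antihomomorphism, so its dual $S^\ast$ sends cofinite ideals to cofinite ideals and thus restricts to $H^\ast\to H^\ast$. The bialgebra compatibility, the counitality of $\Delta_{H^\ast}$, and the antipode identity $\mu_{H^\ast}\circ(S^\ast\otimes\mathrm{id})\circ\Delta_{H^\ast}=\eta_{H^\ast}\circ\epsilon_{H^\ast}$ all translate, via the pairing between $H^\ast$ and $H$, into the corresponding identities in $H$.

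The main obstacle in this plan is the single technical lemma characterizing $H^\ast$ as precisely the subspace of the full dual on which $\mu^\ast$ factors through $H^\ast\otimes H^\ast$; once this is in hand, every other verification is routine diagram chasing dualized from the Hopf structure on $H$. The reason this is delicate is that one must upgrade the arbitrary (possibly one-sided, possibly infinite-codimension) annihilator coming out of a single functional to a two-sided ideal of finite codimension, which is where the assumption that we have \emph{both} a multiplication and a comultiplication—and hence a Hopf-algebraic symmetry between left and right—is essential.
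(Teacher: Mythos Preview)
Your outline is correct and is exactly the standard argument for showing that the finite (Sweedler) dual of a Hopf algebra is again a Hopf algebra; this is the argument one finds in Joseph \cite[Chapter 1.4]{Jo}, to which the paper simply defers without giving any proof of its own. So there is nothing to compare against beyond the citation, and your approach matches it.

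One small inaccuracy in your final paragraph is worth flagging. You say that upgrading a one-sided cofinite annihilator to a two-sided one is where the Hopf structure (the interplay of multiplication and comultiplication) becomes essential. In fact this equivalence---$f$ vanishes on a cofinite left ideal iff on a cofinite right ideal iff on a cofinite two-sided ideal iff $\mu^\ast(f)\in H^*\otimes H^*$---holds for \emph{any} associative algebra $A$, with no coalgebra structure required; it is what makes $A^\circ$ a coalgebra in general. The Hopf structure on $H$ enters only later, to provide the antipode on $H^\ast$ (via $S^\ast$) and to check that $\Delta_{H^\ast}$ is an algebra map. This does not affect the validity of your plan, only the attribution of which hypothesis is doing the work at which step.
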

The Hopf algebra $H^\ast$  has a description in terms of finite-dimensional modules of $H$. 
Let $V$ be a finite-dimensional left $H$-module and $V^*$ its dual. For each $\xi \in V^*$  and $v\in V$  one  defines the coordinate function $c_{\xi,v}^V\in H^*$ via $c_{\xi,v}^V(h)=\xi(h.v)$ for $h\in H$. Denote by $C^V$ the linear span of the functions $c_{\xi,v}^V$ for all $\xi \in V^*$  and $v\in V$.  We have the following fact.

\begin{lemma}\cite[Chapter 1.4]{Jo}
Let $F$ be the subspace of $H^*$ spanned by the $C^V$ where $V$ runs over the finite-dimensional $H$-modules. Then

\noindent (a) $F=H^\ast$.

\noindent (b) $C^V+C^{V'}=C^{V\oplus V'}$ and $C^V \cdot C^{V'}=C^{V\otimes V'}$.
\end{lemma}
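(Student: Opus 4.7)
The plan is to prove (a) by establishing both containments and (b) by direct computation with matrix coefficients and the Hopf structure. For $F \subset H^\ast$, given a finite-dimensional left $H$-module $V$, consider the annihilator $\mathrm{Ann}_H(V) = \{h \in H : h \cdot v = 0 \text{ for all } v \in V\}$. This is a two-sided ideal, and the natural map $H / \mathrm{Ann}_H(V) \hookrightarrow \mathrm{End}_{\CC}(V)$ shows that $\mathrm{Ann}_H(V)$ is of finite codimension. Every matrix coefficient $c_{\xi,v}^V$ vanishes on $\mathrm{Ann}_H(V)$ by construction, so it lies in $H^\ast$.

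For the reverse inclusion $H^\ast \subset F$, let $h \in H^\ast$ and choose a (finite-codimension) two-sided ideal $I$ with $h(I) = 0$. Set $V = H/I$, a finite-dimensional left $H$-module under the regular action, and let $v = \bar 1 \in V$. Define $\xi \in V^\ast$ by $\xi(\bar g) := h(g)$; this is well-defined precisely because $h$ vanishes on $I$. Then
\[ c_{\xi,v}^V(g) = \xi(g \cdot \bar 1) = \xi(\bar g) = h(g), \]
so $h = c_{\xi,v}^V \in C^V \subset F$.

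For (b), the direct sum identity follows because coordinate functions on $V$ and $V'$ extend to $V \oplus V'$ along the projections, and conversely any coordinate function on $V \oplus V'$ against a pair $(v,v') \in V \oplus V'$ and $(\xi,\xi') \in V^\ast \oplus (V')^\ast$ decomposes as $c_{\xi,v}^V + c_{\xi',v'}^{V'}$. For the product identity, the key is that multiplication in $H^\ast$ is dual to the coproduct $\Delta$ and that $H$ acts on $V \otimes V'$ via $\Delta$. Writing $\Delta(h) = \sum h_{(1)} \otimes h_{(2)}$,
\[ (c_{\xi,v}^V \cdot c_{\eta,w}^{V'})(h) = \sum \xi(h_{(1)} v)\, \eta(h_{(2)} w) = (\xi \otimes \eta)\bigl(h \cdot (v \otimes w)\bigr) = c_{\xi \otimes \eta,\, v \otimes w}^{V \otimes V'}(h), \]
so the product of two coordinate functions is a coordinate function on $V \otimes V'$; bilinearity together with the fact that simple tensors span $(V \otimes V')^\ast$ and $V \otimes V'$ give equality of the linear spans.

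There is no serious obstacle: once one identifies the correct module ($H/I$) and the correct functional (induced by $h$) the inclusion $H^\ast \subset F$ is essentially forced. The mildly subtle point is verifying the cofiniteness of $\mathrm{Ann}_H(V)$, which is exactly where the finite-dimensionality of $V$ enters, and the Hopf computation above, which rests on the standard compatibility between the coproduct on $H$ and the tensor-product action.
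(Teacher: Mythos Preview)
Your proof is correct and is the standard argument for this classical fact about the finite dual. Note that the paper does not actually prove this lemma: it is stated with a citation to \cite[Chapter 1.4]{Jo} and no proof is given in the paper itself, so there is nothing to compare against beyond the original reference. One small remark: the paper's stated definition of $H^\ast$ omits the finite-codimension hypothesis on the ideal $I$ (evidently a typo), and you have correctly supplied it when invoking $V = H/I$ as a finite-dimensional module.
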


For more details regarding the Hopf algebra structure we will refer the reader to Joseph's book  \cite{Jo}. In the case when $H=U_q(\gg)$ we will refer to $H^\ast$ as $H^\ast=R _q(G)$, where $G$ is the simply connected, connected semisimple algebraic group with Lie algebra $\gg$. 

 \subsection{$U_q(\nn^+)^\ast$ and $U_q(\nn_w^+)^\ast$}
  In order to give a representation theoretic construction of $U_q(\nn)^\ast$ we cannot rely on the use of the finite dual, as $U_q(\nn^+)$ is not a Hopf algebra, however, it has a natural algebra structure and is isomorphic as an algebra to $U_q(\nn^+)$ (see e.g. \cite{GLSq} and \cite{Yak-spec}). Analogously, the  algebra $U_q(\nn_w^+)^\ast$ is defined as the image of $U_q(\nn_w^+)$ under the isomorphism mentioned above. The following fact was proved by Gei\ss, Leclerc and Schr\"oer.
  
  \begin{theorem}\cite[Theorem 1.1]{GLSq}
  Let $\gg$ be as above, and assume that the Cartan matrix $A$ is symmetric. Then $U_q(\nn_w^+)^\ast$  admits a quantum cluster algebra structure. for all $w\in W$. 
  \end{theorem}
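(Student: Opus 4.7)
Since this is stated as a theorem of Geiß, Leclerc, and Schröer, the most honest proof proposal is simply to cite \cite{GLSq}; however, if one were to reconstruct the argument, here is how I would proceed. The basic idea is to build the initial quantum seed explicitly from any reduced expression $w=s_{i_1}\cdots s_{i_\ell}$ of $w$, mimicking the Berenstein--Fomin--Zelevinsky construction for double Bruhat cells that was recalled in Section \ref{se:dbc as P strata}. The cluster variables in the initial seed are the \emph{quantum unipotent minors} $D(w_{\le k}\omega_{i_k},\omega_{i_k})$ for $k\in[1,\ell]$, defined by pairing the highest weight vector of $V^q_{\omega_{i_k}}$ with the image of $T_{i_1}\cdots T_{i_{k-1}}$ applied to an appropriate extremal vector; these minors live in $U_q(\nn_w^+)^\ast$ because their matrix coefficients vanish on the Lusztig root subspaces outside the interval defined by $w$.

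The plan then is first to define the exchange matrix $B({\bf i})$ using exactly the edge rules recalled in Section \ref{se:dbc as P strata} (the graph $\Gamma({\bf i})$ on indices $[-r,-1]\cup[1,\ell]$), with the frozen indices being those corresponding to the rightmost occurrence of each letter in ${\bf i}$. The quasi-commutation matrix $\Lambda_M$ is read off from the inner products $(u\omega_i,v\omega_j)$ that govern the quantum $R$-matrix commutation relations between the quantum minors, and one then verifies the compatibility $B\Lambda = (d_i\delta_{ij})$ directly from the weight combinatorics. This is the step where symmetry of the Cartan matrix $A$ is crucial: it allows the $q$-commutation scalars to take the uniform form required for compatibility, and it is also where Lusztig's braid group identities for the $T_i$ make the quantum minors well-defined independently of the reduced expression class modulo the ``far commutation'' relations.

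Next I would show that mutation in direction $k$ applied to the initial seed produces the expected quantum minor $D(w_{\le k^+}\omega_{i_k},\omega_{i_k})$, using the quantum $T$-system (or ``quantum determinantal identity'') satisfied by families of quantum minors along a reduced word. This step is the technical heart: it requires verifying a quantum exchange relation of the form $X_k X_k' = M_+ + M_-$ in $U_q(\nn_w^+)^\ast$, where $M_\pm$ are explicit monomials in the remaining cluster variables. Once the mutation identities hold in one direction, induction on $\ell(w)$ together with the independence of $T_w$ from the reduced expression propagates them through the mutation class.

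The main obstacle, in my view, is the final equality $\AA_q = U_q(\nn_w^+)^\ast$ rather than just $\AA_q \subset U_q(\nn_w^+)^\ast$. One inclusion follows from the construction above, but showing that the cluster variables generate the entire algebra requires an independent basis argument: one compares PBW bases of $U_q(\nn_w^+)^\ast$ (or equivalently the dual canonical basis restricted to the $w$-interval) with monomials in the cluster variables, and uses the quantum Laurent phenomenon of \cite{bz-qclust} to upgrade genericity to a full generating statement. In the GLS approach this is done via categorification by modules over the preprojective algebra, but a purely ring-theoretic route is possible using the stratification techniques developed earlier in this paper, applied inductively on $\ell(w)$.
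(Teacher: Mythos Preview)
Your opening sentence is exactly right, and in fact that is all the paper does: the theorem is stated with the citation \cite[Theorem 1.1]{GLSq} and no proof is given. It is quoted as an external result of Gei\ss, Leclerc and Schr\"oer, so there is nothing in the paper to compare your reconstruction against.

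Your sketch of the GLS argument is therefore additional material beyond the paper's scope. As a sketch it is broadly accurate in spirit---initial seed from quantum unipotent minors along a reduced word, exchange matrix via the BFZ graph rules, compatibility from weight combinatorics, mutation via quantum determinantal identities, and the hard direction being that the cluster algebra exhausts $U_q(\nn_w^+)^\ast$. One correction worth flagging: in the actual \cite{GLSq} argument the equality $\AA_q = U_q(\nn_w^+)^\ast$ is \emph{not} obtained by the stratification techniques of the present paper (which were developed afterwards and for a different purpose), but rather through the dual canonical basis and the categorification by preprojective algebra modules that you mention as the alternative. Your suggested ``purely ring-theoretic route'' via the stratification of Section~\ref{se:COS} is speculative and would need substantial new work to make rigorous; it is not a known substitute for the GLS basis argument.
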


  \subsection{Generalized Quantum Minors and Quantum Double Bruhat Cells }
 \label{se:Gqm and qdbc}
 In this section we recall the notion of generalized quantum minors which were originally defined by Berenstein and Zelevinsky in \cite[Section 9]{bz-qclust}. For each dominant weight $\lambda\in P^+$,   define $\Delta^\lambda \in R_q[G]$ by
 $$ \Delta^\lambda(FK_\mu E)= \varepsilon(F) q^{(\lambda|\mu)} \varepsilon(F)  \ ,$$
 where  $F\in U^-$, $\mu\in P$ and $E\in U^+$.  Now, let  $(u,v)\in W\times W$ and let  $u=s_{i_1}\ldots s_{i_k}$ and $v=s_{j_1}\ldots s_{j_\ell}$ be reduced expressions for $u$ and $v$. Define  by $\check\eta_{g}=s_{i_1}\ldots s_{i_g}(\check\alpha_{i_g})$ and by $\check\zeta_{h}=s_{j_1}\ldots s_{j_h}(\check\alpha_{j_h})$ for $1\le g\le k$ and $1\le h\le \ell$.  Denote by 
 $$T^+(u,\lambda)=E_{i_1}^{\left((\lambda(\check\eta_1)\right)} \ldots E_{i_k}^{\left((\lambda(\check\eta_k)\right)}\ ,
 T^-(v,\lambda)=F_{j_\ell}^{\left((\lambda(\check\zeta_\ell)\right)} \ldots F_{j_1}^{\left((\lambda(\check\zeta_1)\right)}\ .$$

 Now, we define  the {generalized quantum minor} $\Delta_{u\lambda, v\lambda}$ by its action on $x\in U_q(\gg)$ as 
 $$\Delta_{u\lambda, v\lambda}(x)=\Delta^\lambda\left(T^+(u,\lambda) x T^-(v,\lambda)\right) \ .$$
 Indeed, $\Delta_{u\lambda, v\lambda}$ is well defined since its definition does not depend on the choice of reduced expressions for $u$ and $v$ (see \cite[Section 9.3]{bz-qclust} ). 
 
We can now define the quantum double Bruhat cells. Denote by $U^0$ the subalgebra of $U_q(\gg)$ generated by the $K_\lambda$, $\lambda\in P$. Then, define  $U_i^+$ to be  the subalgebra generated by $U^0$ and $E_i$, wherea  $U_i^-$ is the subalgebra generated by $U^0$ and $F_i$. 
Now let $(u,v)\in W\times W$ and, as above, let $u=s_{i_1}\ldots s_{i_k}$ and $v=s_{j_1}\ldots s_{j_\ell}$ be reduced expressions.  Now we define
$$U_{u,v}=U_{i_1}^+ \ldots U_{i_k}^+U_{j_1}^-\ldots U_{j_\ell}^-\subset U_q(\gg)\ ,$$
and the ideal $J_{(u,v)}\subset R_q[G]$ by $f\in J_{(u,v)}$ if $f(U_{u,v})=0$.  The ideal is indeed two-sided since $\Delta(U_{u,v})\subset U_{u,v}\otimes U_{u,v}$ (see \cite[Section 9.3]{bz-qclust} ) . 
The {\it closed quantum double Bruhat cell} is now defined as the quotient  $R_q[G]^{u,v}=R_q[G]/J_{(u,v)}$. 
The torus orbits of symplectic leaves on the semisimple group $G$ correspond to the open double Bruhat cells, therefore it is useful to recall the definition of  their quantum analogs from \cite[Section 9.3]{bz-qclust} . One introduces a denominator set $D_{u,v}$ as
$$\{\Delta_{u\lambda, \lambda} \Delta_{\mu, v^{-1}\mu}:\ k\in \ZZ;\  \lambda,\mu\in P^+\}$$ 
and defines $\OO_q(G)^{u,v}=R_q[G]/J_{(u,v)}[D_{u,v}^{-1}]$, the {\it open quantum double Bruhat cell}.

Recall that an element $u$ of a ring $R$ is called normal if $uR=Ru$. 
It was proved by Joseph that the torus invariant normal elements in the double Bruhat cells $R_q[G]^{u,v}$ are , up to scalar multiplication,  elements of  $D_{u,v}$ (see \cite[Sec 2]{Yak-spec}). We have the following fact.

\begin{lemma}\cite[Sec 2]{Yak-spec}
\label{le:center is coeffs}
The center of $\OO_q(G)^{u,v}$ is a Laurent polynomial ring $\CC[c_1^{\pm 1},\ldots c_s^{\pm 1}]$ where $c_1,\ldots c_s\in D_{u,v}$.
\end{lemma}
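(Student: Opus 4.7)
The plan is to combine the $H\times H$-weight decomposition of $\OO_q(G)^{u,v}$ with the cited classification of torus-invariant normal elements. First I would verify that the action of the torus $H\times H$ on $R_q[G]$ by left and right translations preserves the defining ideal $J_{(u,v)}$ (because $U_{u,v}$ is stable under left and right multiplication by $U^0$) and that every element of the Ore set $D_{u,v}$ is a weight vector, so the action descends and extends to a $P\times P$-grading on the localization $\OO_q(G)^{u,v}$.

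Second, a standard weight-grading argument shows that any central $z = \sum_\gamma z_\gamma$ has each weight component $z_\gamma$ central individually: for any weight vector $f$, the relation $[z,f]=0$ decomposes into summands $[z_\gamma, f]$ lying in distinct weight spaces, and these must vanish separately. A central element is tautologically normal, and by Joseph's theorem quoted immediately above, every torus-invariant normal element of $R_q[G]^{u,v}$ is (up to scalar) a product of elements of $D_{u,v}$; extending to the localization, every weight-homogeneous central element of $\OO_q(G)^{u,v}$ has the form $\alpha\prod_j c_j^{n_j}$ with $c_j\in D_{u,v}$ and $n_j\in\ZZ$.

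Third, I would determine which such Laurent monomials actually lie in the center. Any two elements of $D_{u,v}$ $q$-commute, with commutation scalar controlled by a skew-symmetric bilinear form $\Lambda$ read off from their weights via the standard $R$-matrix pairing, so the subalgebra $\CC_\Lambda[D_{u,v}^{\pm 1}]\subset \OO_q(G)^{u,v}$ is a genuine quantum torus in the sense of Appendix \ref{se:Poisson and quantum tori}. A monomial $\prod_j c_j^{n_j}$ is central in $\OO_q(G)^{u,v}$ iff it $q$-commutes trivially with every weight vector of the ambient algebra, which cuts out a saturated sublattice $L\subset \ZZ^{|D_{u,v}|}$ of exponent vectors. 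Choosing a $\ZZ$-basis $\{{\bf n}_1,\ldots,{\bf n}_s\}$ of $L$ and setting $c_i=\prod_j c_j^{({\bf n}_i)_j}$ produces a Laurent polynomial ring $\CC[c_1^{\pm 1},\ldots,c_s^{\pm 1}]$ inside the center, and by the previous paragraph this exhausts the center.

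The main obstacle is the third step: establishing that the $q$-commutation relations between a general weight vector in $\OO_q(G)^{u,v}$ and an element of $D_{u,v}$ are governed entirely by the weight pairing, so that centrality reduces to a purely linear condition on the exponent vectors. This rests on the explicit commutation formulas for generalized quantum minors in \cite[Sect.~9]{bz-qclust} and amounts to a calculation inside the quantum torus generated by $D_{u,v}$; once it is in place, the conclusion reduces to the standard description of the center of a quantum torus.
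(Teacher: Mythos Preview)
The paper does not prove this lemma; it is quoted directly from \cite[Sec.~2]{Yak-spec}, so there is no ``paper's own proof'' to compare against. Your outline is a faithful reconstruction of the standard argument and is essentially correct, but one remark is in order.

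The step you flag as the main obstacle---showing that the $q$-commutation of an element of $D_{u,v}$ with an arbitrary weight vector is controlled by the weight pairing---is more than the lemma actually demands. Once you have established (via Joseph's classification of torus-invariant normal elements, pulled back from the localization to $R_q[G]^{u,v}$ by clearing denominators) that every homogeneous central element is a scalar multiple of a Laurent monomial in the finitely many generators $\Delta_{u\omega_i,\omega_i}$, $\Delta_{\omega_i,v^{-1}\omega_i}$, you are done: these generators span a quantum torus of rank $2r$ inside $\OO_q(G)^{u,v}$ (the algebra is a domain, so the monomials are linearly independent), the exponent vectors of central monomials form a subgroup $L\subset\ZZ^{2r}$, and every subgroup of a finitely generated free abelian group is itself free. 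Choosing a $\ZZ$-basis of $L$ immediately gives the Laurent-polynomial description. Saturation of $L$ and the explicit linear description of centrality are true (for generic $q$) and useful if one wants to compute $s$, but they are not needed for the bare statement of the lemma. A minor notational point: $D_{u,v}$ is an infinite monoid, so writing $\ZZ^{|D_{u,v}|}$ is ambiguous; you should work with the $2r$ generating minors throughout.
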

 
 \begin{remark}
Observe that \cite[Section 9.3]{bz-qclust} implies that the Goodearl Letzter Stratification Theory  can be applied to these algebras (see \ref{se: goodearl letzter strat} and \cite[Section II.4.4]{brown-goodearl}). 
\end{remark}  

Berenstein and Zelevinsky conjectured in \cite{bz-qclust} that $\OO_q(G)^{u,v}$ and $R_q[G]$ have a quantum cluster algebra structure with initial seeds given as in the classical case by quantum minors. It follows from the  work of Gei\ss\ Leclerc and Schr\"oer \cite[Section 12]{GLSq} that we do have the cluster algebra structures in the case of $G=SL_n$, as the quantum cluster algebra structure on the quantum $n\times n$-matrices induces a quantum cluster algebra structure on $R_q[SL_n]$.

 %Secondly we have the following characterization of the ideals and the quotients.
 
 %\begin{lemma}
 %Let  ${\bf w}=s_{i_1}\ldots s_{i_k}$ be a reduced expression for $w\in W$. The corresponding cluster is super toric and has COS with toric prime ideals $\II_{s_{i_k},%\ldots \II_s_{{i_1}\ldots s_{i_k}}$.
 %\end{lemma}
 
%\begin{proof}

%\end{proof} 

\subsection{The Dixmier Map}
\label{se:Dixmier map}
In this section let $G=SL_n$. However, we will refer to it as $G$ to indicate that all results and conjectures also apply in the case when $G$ is an arbitrary complex semisimple algebraic group.
 Let  ${\bf w}_0=s_{i_1}\ldots s_{i_k}$ and ${\bf w}_0'=s_{i_1}\ldots s_{i_k}$  be  reduced expressions for $w_0$, and let  ${\bf x}= {\bf x}_{{\bf w}_0, {\bf w}_0'}$ be the corresponding cluster cluster consisting of generalized (quantum) minors. Consider a  sequence of pairs    $(e,e)=(u_0,v_0),(u_1,v_1),\ldots, (u_2,v_2)\ldots (u_{2k},v_{2k})=(w_0,w_0)$ with $(u_i,v_i)\in W\times W$  and $(u_i,v_i)<(u_{i+1},v_{i+1})$ in the order induced by the Bruhat order on $W\times W$. It is easy to see that $J_0=J_{e,e}\supset J_{(u_1,v_1)}\supset\ldots\supset J_{u_{2k},v_{2k}}\supset R_q[G]$ defines a COS of depth $r$.   
The quotients are by definition the corresponding closed quantum double Bruhat cells. Of course,  we obtain analogous patterns in the classical case.
 
 We are now ready  to construct the Dixmier map.

 \begin{proposition}
 \label{pr:Dixmier type map}
 Let $W_0$ be the set of reduced expressions for $w_0\in W$. Then the set of Dixmier type maps $DIX_{\bf x}$ for the clusters ${\bf x}={\bf x}_{{\bf w}_0, {\bf w}_0'}$ where  $({\bf w}_0, {\bf w}_0')$ runs over the elements of $W_0\times W_0$, defines a bijection between $spec(R_q[G])$ and $P.spec(\CC[G])$. 
 \end{proposition}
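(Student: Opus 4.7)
The plan is to build the claimed bijection stratum by stratum using the Goodearl--Letzter/Joseph stratifications on both sides. First, I would invoke the well-known parametrization (due to Joseph and Hodges--Levasseur--Toro on the quantum side, and to Kogan--Zelevinsky/Yakimov on the Poisson side) that the TIPs of $R_q[G]$ and the TIPPs of $\CC[G]$ are both parametrized by pairs $(u,v)\in W\times W$: on the quantum side by the ideals $J_{(u,v)}$ of Section \ref{se:Gqm and qdbc}, and on the Poisson side by the vanishing ideals of the closures of the double Bruhat cells. Under this parametrization, inclusion of ideals corresponds to the Bruhat order on $W\times W$, so that every maximal chain of TIPs (resp.\ TIPPs) reading from $J_0=J_{(e,e)}$ down to $J_{(w_0,w_0)}$ corresponds to a maximal chain in the Bruhat order on $W\times W$, and these chains collectively enumerate every TIP (resp.\ TIPP).

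Next, for each such maximal chain I would choose a reduced word $({\bf w}_0,{\bf w}_0')$ whose partial products trace out the chain. This is precisely the situation recorded just before the proposition: the cluster ${\bf x}={\bf x}_{{\bf w}_0,{\bf w}_0'}$ of generalized (quantum) minors from Section \ref{se:dbc as P strata} yields a COS of depth $r$, and the standard Poisson structure on $\CC[G]$ is induced by a compatible pair $(B,\Lambda)$ as recalled after Proposition \ref{pr:comp under mutation}. Super-toricity of ${\bf x}$ follows because the pair $(B,\Lambda)$ is non-degenerate (Lemma \ref{le:full rank}) together with the fact that the global toric action on $\CC[G^{w_0,w_0}]$ arises from the full rank torus $H\times H$ acting by left and right multiplication. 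Corollary \ref{cor:intersection spectra q} then yields, for each $k$, inclusion preserving bijections between $spec_{J_{(u_k,v_k)}}(R_q[G])$ and $P.spec_{J_{(u_k,v_k)}}(\CC[G])$ and the spectra of the corresponding truncated quantum and Poisson tori; Lemma \ref{le:space homeo} identifies these two torus spectra canonically, and composing produces $DIX_{\bf x}$ as a bijection on the union of the strata attached to the chain.

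Taking the union over all $({\bf w}_0,{\bf w}_0')\in W_0\times W_0$, every TIP of $R_q[G]$ and every TIPP of $\CC[G]$ appears as $J_{(u_k,v_k)}$ for some chain, so the union of the domains of the maps $DIX_{\bf x}$ covers $P.spec(\CC[G])$ and the union of their images covers $spec(R_q[G])$. To obtain a single bijection we need the local pieces to glue: the restriction of $DIX_{\bf x}$ to a stratum $P.spec_{J_{(u,v)}}(\CC[G])$ must depend only on $J_{(u,v)}$, not on the chain through it.

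The main obstacle will be precisely this well-definedness: two reduced expressions may yield clusters ${\bf x}$ and ${\bf x}'$ whose COS both reach $J_{(u,v)}$, and one must check that the resulting bijections on $P.spec_{J_{(u,v)}}(\CC[G])$ agree. By Corollary \ref{cor:intersection spectra q} each such bijection is obtained by localizing $\AA/J_{(u,v)}$ (resp.\ $\AA_q/J_{(u,v)}$) at the cluster monomials in the non-vanishing variables, and any two such clusters are connected by a sequence of mutations within the same stratum. Thus the verification reduces to showing that mutation of a super-toric cluster intertwines the two embeddings of Theorem \ref{th:ideals descr cl} and Theorem \ref{th:ideals descr} with the canonical homeomorphism $\eta$ of Lemma \ref{le:space homeo}; this in turn follows from Proposition \ref{pr:comp under mutation}, since the pair $(B_k,\Lambda_k)$ transforms correctly under mutation and the quantum and classical exchange relations have identical leading combinatorics. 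Once this compatibility is established, assembling the local bijections yields the claimed bijection $spec(R_q[G])\leftrightarrow P.spec(\CC[G])$.
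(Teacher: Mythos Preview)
Your overall architecture coincides with the paper's: stratify both spectra by $W\times W$, realize each maximal chain as a COS for some cluster ${\bf x}_{{\bf w}_0,{\bf w}_0'}$, apply Corollary~\ref{cor:intersection spectra q} and Lemma~\ref{le:space homeo} stratum by stratum, and then check that the pieces glue. The divergence, and the gap, is entirely in your well-definedness step.

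The paper does \emph{not} argue via mutation. It observes that, by the Goodearl--Letzter theory (Theorem~\ref{th:stratification 2}(4)--(5)), the stratum $spec_{J_{(u,v)}}(R_q[G])$ is canonically homeomorphic to $spec$ of the center of the localized quotient, and then uses Lemma~\ref{le:center is coeffs} together with Remark~\ref{re:coeffs}: that center is generated by (Laurent monomials in) the \emph{coefficients} of the cluster, and the coefficients surviving on the $(u,v)$-stratum are always the specific minors $\Delta_{u\omega_i,\omega_i}$ and $\Delta_{\omega_i,v^{-1}\omega_i}$, independently of the choice of reduced expression $({\bf w}_0,{\bf w}_0')$. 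Hence the identification of the stratum with a Laurent polynomial spectrum is literally the same subring for every reduced word, and consistency is automatic. The Poisson side is identical.

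Your proposed substitute---that mutation intertwines the embeddings of Theorems~\ref{th:ideals descr} and~\ref{th:ideals descr cl} with the homeomorphism $\eta$---is not what Proposition~\ref{pr:comp under mutation} says. That proposition only asserts that $(B_k,\Lambda_k)$ is again a compatible pair; it gives no comparison between $\phi_{q,{\bf x}}^{-1}\circ\eta\circ\phi_{cl,{\bf x}}$ and $\phi_{q,{\bf x}_k}^{-1}\circ\eta_k\circ\phi_{cl,{\bf x}_k}$ on a common stratum. The phrase ``identical leading combinatorics'' does not supply such a comparison either: one would need to track how the central Laurent monomials transform under mutation and verify that the induced map on their spectrum is the identity, which amounts to re-deriving Lemma~\ref{le:center is coeffs}. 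So as written, the gluing step is unjustified; the paper's argument via the invariance of the coefficients closes this gap in one line.
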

 
 \begin{proof}
 It only remains to verify that the map is well defined: i.e. that the image of $spec(R_q[G]^{u,v})$ under a Dixmier type map does not depend on $({\bf w}_0, {\bf w}_0')$. However,   Lemma \ref{le:center is coeffs} and Remark \ref{re:coeffs} imply that the center of $R_q[G]^{u,v}$ is induced by elements of the Laurent polynomial ring in the coefficients in the upper cluster algebra structure, and hence is independent of   the choice of reduced expression.  \end{proof}
 
 \begin{remark}
 By the construction of the double Bruhat cells, Proposition \ref{pr:Dixmier type map} also holds for arbitrary $G$. Indeed, we do not need the cluster algebra structure at all for this result, it follows straight from Yakimov's \cite{Yak-spec}. 
 \end{remark}
 
 \subsubsection{Topological Properties of the Dixmier map}
 Using Section \ref{se:symplectic leaves} we may restrict the map to primitive ideals in the quantum case and Poisson primitive ideals, resp.~symplectic leaves in the semiclassical case. This construction yields a Dixmier map from the space of symplectic leaves in $\CC[G]$ to the space primitive ideals in $R_q[G]$. By construction this map agrees with Yakimov's Dixmier map in \cite[Sec 4]{Yak-spec}, however, it 
 contains  more information regarding the topology of $spec(R_q[G])$ and $P.spec(\CC[G])$, and thus provides new evidence for the following cnjecture.
 
  \begin{conjecture} \cite[Conjecture 4.7]{Yak-spec}
  \label{conj:homeo -spec} 
 The Dixmier map constrcuted above is a homeomorphism between  space of symplectic leaves in $\CC[G]$ and the space primitive ideals in $R_q[G]$.
 \end{conjecture}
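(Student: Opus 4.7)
The plan is to prove the conjecture by verifying three properties of $DIX$ separately: it restricts to a homeomorphism on each torus-orbit stratum, it preserves closure relations across strata, and the stratumwise data assemble into a globally coherent bijection. The construction via cluster algebras in Section \ref{se:Dixmier map} delivers each of these ingredients through Corollary \ref{cor:intersection spectra q} combined with Lemma \ref{le:space homeo}.

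First I would fix a TIPP $\II=\II_k$ appearing in a COS of a cluster ${\bf x}$ associated to some $({\bf w}_0,{\bf w}_0')\in W_0\times W_0$. By construction, $DIX_{\bf x}$ restricted to $P.spec_{\II_k}(\CC[G])$ factors as $\phi_{q,{\bf x}}^{-1}\circ \eta\circ \phi_{cl,{\bf x}}$, where $\phi_{cl,{\bf x}}$ and $\phi_{q,{\bf x}}$ are the inclusion-preserving bijections of Corollary \ref{cor:intersection spectra q} and $\eta$ is the homeomorphism of Lemma \ref{le:space homeo} between $P.spec(\CC[x_1,\ldots,x_n],\Lambda)$ and $spec(\CC_\Lambda[x_1,\ldots,x_n])$. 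Each factor is a homeomorphism onto its image, so $DIX$ is a stratumwise homeomorphism. Independence of the choice of ${\bf x}$ within a stratum follows from Proposition \ref{pr:Dixmier type map} together with Lemma \ref{le:center is coeffs}, because the cluster monomials generate, up to scalar, the same Ore set in the corresponding open double Bruhat cell regardless of the reduced expression chosen.

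Second, to handle closure across strata, I would invoke the assertion from the introduction that for any pair of TIPPs $\II\subset \JJ$ there exists a single cluster ${\bf x}$ suitable for both. Such an ${\bf x}$ embeds $\bigsqcup_k P.spec_{\II_k}(\CC[G])$ inclusion-preservingly into $P.spec(\CC[x_1,\ldots,x_n],\Lambda)$ via $\phi_{cl,{\bf x}}$, and similarly on the quantum side via $\phi_{q,{\bf x}}$. Since $\eta$ is an inclusion-preserving homeomorphism of the ambient torus spectra, the containment $I_1\subset I_2$ for Poisson primes $I_1\in P.spec_{\II}$ and $I_2\in P.spec_{\JJ}$ is transported faithfully by $DIX$. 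Restricting to Poisson primitive ideals on one side and primitive ideals on the other via Section \ref{se:symplectic leaves} then yields the stated bijection at the level of symplectic leaves.

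The main obstacle will be showing rigorously that \emph{every} specialization relation in $P.spec(\CC[G])$ is in fact witnessed inside a common suitable cluster, and, dually, that no spurious closure relations are introduced on the quantum side. The cluster localization discards information transverse to a stratum, so proving that this loss is identical on both sides of $\eta$ requires a careful comparison of how the frozen generalized (quantum) minors encode the scheme-theoretic closure of symplectic leaves along double Bruhat boundaries with the Goodearl-Letzter $H$-stratification of $R_q[G]$. This transverse comparison appears to be the essential missing ingredient; the cluster-theoretic framework developed in this paper reduces the conjecture to it but does not resolve it in full generality.
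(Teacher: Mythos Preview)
The statement you are attempting to prove is labelled a \emph{conjecture} in the paper and is not proved there; the paper explicitly presents its cluster-theoretic construction only as ``new evidence'' for it. So there is no proof in the paper to compare against, and your final paragraph correctly identifies that your argument is incomplete.

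Your outline is accurate as a reduction. The stratumwise part is fine: restricted to a single $P.spec_{\II_k}$, Theorem \ref{th:ideals descr} and its classical analogue identify both sides with spectra of Laurent polynomial rings, and $\eta$ matches these. The problem is entirely in the cross-stratum step. When you write that $\phi_{cl,{\bf x}}$ and $\phi_{q,{\bf x}}$ are ``homeomorphisms onto their images'' across several strata, you are asserting more than Corollary \ref{cor:intersection spectra q} gives: that corollary only says the maps are \emph{inclusion-preserving bijections}, not that their inverses preserve inclusions. The missing reverse direction on the classical side is exactly Conjecture \ref{conj:incl}, and the paper's remark following it states that Conjecture \ref{conj:incl} together with its converse (via \cite[Lemma 9.4]{Goo1}) is what would be needed to conclude the homeomorphism. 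Your ``transverse comparison'' is precisely this pair of unproved statements.

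In short, your proposal does not prove the conjecture, but your diagnosis of the obstruction agrees with the paper's: the cluster framework reduces the question to whether intersection with the polynomial subring $S_{\bf x}$ detects all and only the genuine inclusions among Poisson primitive (resp.\ primitive) ideals, and that remains open.
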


   In particular, it is well-known that $G^{u,v}$ lies in the closure of $G^{u',v'}$ if and only if $u\le u'$ and $v\le v'$ in the (strong) Bruhat order, that is there exist  reduced expressions ${\bf u'}=s_{i_1}\ldots s_{i_r}$ and ${\bf u'}=s_{j_1}\ldots s_{j_s}$ such that  there exist $g\le r$ and $h\le s$ with the property that $s_{i_1}\ldots s_{i_g}$ and $s_{j_1}\ldots s_{j_h}$ are reduced expressions for $u$, resp.~$v$. An analogous result is true for the closed quantum double Bruhat  cells. This implies that the intersections of $\CC[G]$ and $R_q[G]$ with the (skew) polynomial rings generated by the clusters ${\bf x}_{{\bf w}_0, {\bf w}_0'}$  (see  the previous section) capture all inclusion relations between Poisson primitive ideals in the classical case and primitive ideals in the quantum case. Of course, it is possible that they yield too many inclusions. It would therefore be of the great interest to determine the exact inclusion relations between symplectic leaves in $\CC[G]$. We therefore make the following conjecture which would immediately imply that the inverse of the Dixmier map is inclusion preserving.
   
   \begin{conjecture}
   \label{conj:incl}
   Let ${\bf x}_{{\bf w}_0, {\bf w}_0'}$ be a cluster as defined above , and  let $S_{\bf x}=\CC[x_1,\ldots, x_{2 k+r}]$ be the subring  of $\CC[G]$ generated by the cluster variables. If $\II$ and $\JJ$ are two Poisson primitive ideals, then $\II\subset  \JJ$ if and only if $(\II\cap S_{\bf x})\subset (\JJ\cap S_{\bf x})$.
   \end{conjecture}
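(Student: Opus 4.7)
The forward direction $\II\subset\JJ\Rightarrow\II\cap S_{\bf x}\subset\JJ\cap S_{\bf x}$ is immediate. For the converse my plan is to use Corollary \ref{cor:intersection spectra q} together with the explicit identification of the TIPs of $\CC[G]$ with closures of double Bruhat cells via Proposition \ref{pr:DBC-def}. Let $\II_0(\II)$ and $\II_0(\JJ)$ denote the maximal TIPs contained in $\II$ and $\JJ$, respectively. The first step is to show that $\II\cap S_{\bf x}\subset\JJ\cap S_{\bf x}$ forces the Bruhat inequality $(u_\II,v_\II)\le(u_\JJ,v_\JJ)$, where $\II_0(\II)=J_{(u_\II,v_\II)}\cap\CC[G]$. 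By the semiclassical version of Proposition \ref{pr:TPP super toric q+}, each intersection $\II_0(\cdot)\cap S_{\bf x}$ is generated by the cluster variables it contains, so the question reduces to asking which cluster variables $\Delta(k,{\bf i})=\Delta_{u_{\le k}\omega_i,v_{>k}\omega_i}$ lie in $J_{(u,v)}$. In particular, the frozen coefficients $\Delta_{u\omega_i,\omega_i}$ and $\Delta_{\omega_i,v^{-1}\omega_i}$ (Remark \ref{re:coeffs}) are cluster variables, and by Proposition \ref{pr:DBC-def}(a),(b) their vanishing separately detects the Bruhat relations $u_\II\le u_\JJ$ and $v_\II\le v_\JJ$.

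With $\II_0(\II)\subset\II_0(\JJ)$ in hand, I would pass to the quotient $\CC[G]/\II_0(\II)$ and apply Theorem \ref{th:ideals descr cl} to embed it into the Poisson Laurent polynomial ring $(\CC[x_1^{\pm 1},\ldots, x_{k-1}^{\pm 1}],\Lambda_{[1,k-1]})$. The hypothesized inclusion $\II\cap S_{\bf x}\subset\JJ\cap S_{\bf x}$ descends to an inclusion of the images of $\II$ and $\JJ$ intersected with the corresponding Laurent polynomial subring. Since Poisson primes in a Laurent polynomial ring are determined by their intersection with the generating polynomial subring, the inclusion lifts to $\bar\II\subset\bar\JJ$ in the Laurent ring, and hence to $\II\subset\JJ$ after unwinding the embedding. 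In the strictly nested case $\II_0(\II)\subsetneq\II_0(\JJ)$ the same argument applies after a further quotient by $\II_0(\JJ)/\II_0(\II)$, using that the images remain Poisson primitive by the algebraic-leaf statement implicit in Theorem \ref{th:ideals descr cl} and the stratum bijection of Corollary \ref{cor:intersection spectra q}.

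The main obstacle I anticipate is the combinatorial content of the first step: showing that the specific set of generalized minors appearing in the cluster ${\bf x}_{{\bf w}_0,{\bf w}_0'}$ is rich enough to witness the full Bruhat order on $W\times W$, rather than some coarser order. The frozen variables $\Delta_{u\omega_i,\omega_i}$ and $\Delta_{\omega_i,v^{-1}\omega_i}$ are the natural candidates for this separation by Proposition \ref{pr:DBC-def}(a),(b), but one must verify that no distinct pair $(u,v)\ne (u',v')$ not comparable in Bruhat order produces the same set of vanishing cluster variables for our chosen $({\bf w}_0,{\bf w}_0')$. If the separation fails for some particular pair of reduced expressions, one can still hope to rescue the argument by ranging over all $({\bf w}_0,{\bf w}_0')\in W_0\times W_0$, since Proposition \ref{pr:Dixmier type map} shows that the union of Dixmier-type maps over all such clusters defines a bijection and the closure relations should become fully detectable after taking this union.
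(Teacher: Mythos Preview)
The statement you are attempting to prove is labeled as a \emph{Conjecture} in the paper (Conjecture~\ref{conj:incl}), and the paper offers no proof for it. In fact, the surrounding discussion explicitly flags the difficulty: the author writes that the intersections with $S_{\bf x}$ ``capture all inclusion relations'' but that ``it is possible that they yield too many inclusions,'' and then states the conjecture precisely because the exact inclusion relations between symplectic leaves are not determined. So there is no proof in the paper to compare against; you are attempting to settle an open problem.

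Regarding the substance of your outline: the forward direction is indeed trivial, and your plan for the converse correctly isolates the two ingredients (comparing the underlying TIPs via Bruhat order, then comparing within strata via the Laurent embedding of Theorem~\ref{th:ideals descr cl}). However, the step you yourself flag as the ``main obstacle'' is the entire content of the conjecture. Knowing that the frozen minors $\Delta_{u\omega_i,\omega_i}$ and $\Delta_{\omega_i,v^{-1}\omega_i}$ detect the Bruhat relation on the $(w_0,w_0)$-cell is not the issue; the issue is whether, for a \emph{fixed} choice of $({\bf w}_0,{\bf w}_0')$, the intersection $I\cap S_{\bf x}$ of an arbitrary Poisson primitive ideal with the polynomial ring on that single cluster remembers enough to reconstruct $I$. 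Your proposed rescue of ranging over all $({\bf w}_0,{\bf w}_0')\in W_0\times W_0$ changes the statement: the conjecture is posed for a single cluster, not for the intersection over all clusters, and Proposition~\ref{pr:Dixmier type map} gives only a bijection of sets, not control over closure relations. Moreover, in the within-stratum step, the assertion that ``Poisson primes in a Laurent polynomial ring are determined by their intersection with the generating polynomial subring'' is being applied to the image of $\II$ and $\JJ$ in $\AA/\II_0(\II)$, but what you actually have is an inclusion of intersections with $S_{\bf x}$ in $\AA$, not in the quotient; passing this through the embedding of Theorem~\ref{th:ideals descr cl} requires exactly the compatibility that is in question.
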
  
 
 \begin{remark}
 Notice that it follows from \cite[Lemma 9.4]{Goo1} that proving Conjecture \ref{conj:incl} and its converse would yield a proof  of Conjecture \ref{conj:homeo -spec}.  
 \end{remark}
 
\subsection{Open Questions and Conjectures}

So far we have only considered clusters consisting entirely of minors. However, it is natural to consider other clusters as well and this would allow us to compare inclusion relations between arbitrary strata of the spectrum. We therefore propose the following conjecture.

\begin{conjecture}
Let $\II$ and $\JJ$ be torus invariant proper   prime ideals of $R_q[G]$ with $\II\subset \JJ$. Then there exists a supertoric cluster ${\bf x}$ with COS of depth $r$ such that (up to a suitable permutation of indices) there exist $1\le i\le j\le r$ with $\II=\II_j$ and $\JJ=\II_i$ in the respective stratification $\II_{r}\subset \II_n\ldots\subset  \II_0=R_q[G]$.  
\end{conjecture}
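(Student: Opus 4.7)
The plan is to translate the problem from ideal theory to combinatorics of the Weyl group $W\times W$, and then invoke the chain construction developed in the paragraph preceding Proposition~\ref{pr:Dixmier type map}. By the results of Joseph and Yakimov recalled in Section~\ref{se:Gqm and qdbc}, the proper torus invariant prime ideals of $R_q[G]$ are exactly the defining ideals $J_{(u,v)}$ of the closed quantum double Bruhat cells, indexed by pairs $(u,v)\in W\times W$. The inclusion $J_{(u,v)}\subseteq J_{(u',v')}$ corresponds to the reverse Bruhat order $(u',v')\le (u,v)$ on $W\times W$ (with the product order). So if $\II\subset\JJ$ are given, I would first write $\II=J_{(u,v)}$ and $\JJ=J_{(u',v')}$ with $(u',v')\le(u,v)\le(w_0,w_0)$.

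The second step is to build a reduced expression for $(w_0,w_0)$ that \emph{factors through} both pairs. Because the Bruhat order on $W\times W$ is graded by length and the subword property holds component-wise, we can find a saturated chain
\[
(e,e)=(p_0,q_0)\,<\,(p_1,q_1)\,<\,\cdots\,<\,(p_{j},q_{j})=(u',v')\,<\,\cdots\,<\,(p_{i},q_{i})=(u,v)\,<\,\cdots\,<\,(p_{2N},q_{2N})=(w_0,w_0),
\]
where $N=\ell(w_0)$. Each covering relation consists of right multiplication by a simple reflection from one of the two copies of $W$, so the chain encodes a shuffle ${\bf j}$ of reduced expressions of $u$ (in the alphabet $s_{-1},\dots,s_{-r}$) and of $v$ (in $s_1,\dots,s_r$), extended to a reduced expression for $(w_0,w_0)$, and such that the reduced expression of $(u',v')$ appears as a subword at step $j$ and the reduced expression of $(u,v)$ at step $i$.

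The third step is to take the initial cluster ${\bf x}_{{\bf w}_0,{\bf w}_0'}$ attached to the reduced expression ${\bf j}$, consisting of generalized quantum minors $\Delta(k,{\bf j})$ as in Section~\ref{se:dbc as P strata}. After suitably permuting the indices so that the variables are listed in the order induced by the chain, the construction preceding Proposition~\ref{pr:Dixmier type map} shows that the ideals $J_{(p_\ell,q_\ell)}$ fit into the stratification exactly as $\II_\ell\cap{\bf x}=\{x_\ell,\dots,x_{2N+r}\}$. In particular $\II_i=\JJ$ and $\II_j=\II$, which is precisely the desired COS.

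The main obstacle is the super-toric condition $\mathrm{rk}\bigl(T_{[1,k]}+\mathrm{Im}\,\Lambda_{[1,k]}\bigr)=k$ for every $k$. Existence of the chain and the COS only uses the combinatorial framework, but super-toricity requires a genuine computation with the exchange matrix $B$ of the cluster produced above and the skew-symmetric matrix $\Lambda_M$ determined by the commutation relations between the quantum minors. My plan here would be to exploit the explicit description of the quasi-commutation relations between generalized quantum minors (which are ruled by pairings of weights, analogous to formula \eqref{eq:Poisson Bruhat} in the classical case) together with the shape of the directed graph $\Gamma({\bf j})$: the diagonal-like block structure of $B\Lambda_M$ given by compatibility (Definition~\ref{def:compa pair}) forces $\mathrm{Im}\,\Lambda_{[1,k]}$ to contain all of the image of the principal $k\times k$ submatrix of $B\Lambda_M$, and what is missing is supplied by the frozen-variable directions, which project onto $T_{[1,k]}$. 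Verifying this cleanly in full generality (as opposed to $SL_n$, where the Gei\ss--Leclerc--Schr\"oer cluster structure is explicit) is the delicate point, and may require either a genericity argument on the choice of chain or a mutation of the initial cluster to a more convenient one — exactly the kind of flexibility the statement allows by not requiring ${\bf x}$ to be an initial cluster.
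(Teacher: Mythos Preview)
This statement is a \emph{conjecture} in the paper, appearing in the section ``Open Questions and Conjectures''; the paper offers no proof, so there is nothing to compare your argument against. Your first three steps essentially reproduce the chain construction the paper already sketches before Proposition~\ref{pr:Dixmier type map} for clusters of generalized minors, and the paper itself does not claim that this construction settles the conjecture.

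There is a genuine gap in your step~2. You assert that ``each covering relation consists of right multiplication by a simple reflection from one of the two copies of $W$'', but this is false for the strong Bruhat order: a covering $u'\lessdot u$ corresponds to multiplication by an arbitrary reflection (equivalently, deletion of a single letter from a reduced word), not necessarily a simple one. Consequently a saturated Bruhat chain from $(e,e)$ to $(w_0,w_0)$ through $(u',v')$ and $(u,v)$ need \emph{not} arise as the sequence of prefixes of a reduced expression. Concretely, if $u'\le u$ in Bruhat order but $u'\not\le u$ in the right weak order (e.g.\ $s_1\le s_2s_1$ in $S_3$), then no reduced expression of $w_0$ has reduced expressions of both $u'$ and $u$ as prefixes, and hence no cluster of the form ${\bf x}_{{\bf w}_0,{\bf w}_0'}$ can realise both $J_{(u,v)}$ and $J_{(u',v')}$ in a single COS. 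This is precisely why the conjecture asks for an arbitrary (possibly mutated) cluster rather than an initial minor cluster, and why it remains open.

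As for the super-toric condition, you correctly identify it as the hard part, but your heuristic about the block structure of $B\Lambda_M$ and the contribution of frozen directions is not a proof; indeed the paper records ``every cluster is supertoric'' as a separate conjecture immediately after this one. So even restricted to the pairs your chain argument would handle, the argument is incomplete.
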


\begin{conjecture}
Every cluster is supertoric.
\end{conjecture}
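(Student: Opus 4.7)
The plan is to reformulate the supertoric condition via orthogonal complements, exploit the compatibility $B\Lambda = [D\mid 0]$ together with the skew-symmetry of $\Lambda$, and then argue that the resulting linear constraints force the desired vanishing.

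First, I would observe that $\mathrm{rk}(T_{[1,k]} + \mathrm{Im}(\Lambda_{[1,k]})) = k$ (over $\QQ$) is equivalent, by taking orthogonal complements in $\QQ^k$ with respect to the standard inner product, to the statement $T_{[1,k]}^\perp \cap \ker(\Lambda_{[1,k]}) = 0$. Skew-symmetry of $\Lambda_{[1,k]}$ gives $\mathrm{Im}(\Lambda_{[1,k]})^\perp = \ker(\Lambda_{[1,k]})$, and $T = \ker B$ combined with $(\ker B)^\perp = \mathrm{Im}(B^T)$ shows that $v \in T_{[1,k]}^\perp$ exactly when there is some $w \in \QQ^m$ with $B^T w = (v, 0, \ldots, 0)$. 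The goal becomes: given such $v$ and $w$ with $\Lambda_{[1,k]} v = 0$, show $v = 0$.

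Next I would exploit compatibility by computing $\Lambda(v, 0)^T = \Lambda B^T w$ in two ways. From the block decomposition of $\Lambda$ and the hypothesis $\Lambda_{[1,k]} v = 0$, the first $k$ entries of $\Lambda(v, 0)^T$ vanish. On the other hand, skew-symmetry gives $\Lambda B^T = -(B\Lambda)^T = -[D\mid 0]^T$, so the first $m$ entries of $\Lambda B^T w$ are $-d_i w_i$ and the remaining $n - m$ entries vanish. Comparison yields $w_i = 0$ for every $i \le \min(k, m)$, and when $k < m$ it also produces the auxiliary relations $\Lambda_{[1,k],[k+1,m]}^T v = (d_{k+1}w_{k+1}, \ldots, d_m w_m)^T$ and $\Lambda_{[1,k],[m+1,n]}^T v = 0$. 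When $k \ge m$, the conclusion $w = 0$ is immediate and $v = B^T w$ forces $v = 0$, settling this range of $k$ unconditionally.

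The main obstacle is the final step in the range $k < m$: together with $B^T w = (v, 0)$ and $w_i = 0$ for $i \le k$, the residual system must be shown to force $v = 0$. This cannot be a purely formal consequence of compatibility: when $n = m$ and $\Lambda$ is a generic invertible skew-symmetric matrix the principal $1 \times 1$ block $\Lambda_{[1,1]}$ vanishes, so the supertoric condition fails despite $(B, \Lambda)$ being compatible. Any proof must therefore exploit additional structure specific to the clusters at issue --- the presence of sufficiently many frozen variables, the shape of exchange matrices arising from reduced expressions of $w_0$ in Lie-theoretic constructions, or invariance of the supertoric property under mutation. The most promising route would combine an induction on cluster mutation with a direct verification of supertoric-ness for the natural initial seeds appearing in Section \ref{se:dbc as P strata}, propagating via the explicit mutation formulas of Proposition \ref{pr:comp under mutation} for compatible pairs.
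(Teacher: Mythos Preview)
The statement you are attempting to prove is listed in the paper as a \emph{conjecture}, not a theorem; the paper offers no proof whatsoever. There is therefore nothing to compare your proposal against. What you have written is not a proof but an outline of a strategy together with an honest assessment of why the naive linear-algebraic approach must fail---and that assessment is correct.

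Your reformulation of the supertoric condition as $T_{[1,k]}^\perp \cap \ker(\Lambda_{[1,k]}) = 0$ is sound, and the computation disposing of the range $k \ge m$ is valid. More importantly, your $n=m=2$ counterexample (take $B=\Lambda^{-1}$ any invertible skew-symmetric $2\times 2$ integer matrix) genuinely shows that the supertoric property cannot follow from the axioms of a compatible pair alone: with no frozen variables one has $T=\ker B=0$, hence $T_{[1,1]}=0$, while $\Lambda_{[1,1]}=(0)$ automatically, so the rank condition fails at $k=1$. This means the conjecture, read literally for arbitrary quantum cluster algebras, is false; it can only be intended for the specific cluster structures on $R_q[G]$ (or the double Bruhat cells) under discussion in Section~\ref{se:q-groups}. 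Your suggestion to verify the property on the initial seeds coming from reduced expressions and then propagate via the mutation formulas of Proposition~\ref{pr:comp under mutation} is the natural line of attack, but carrying it out would constitute new research, not a reconstruction of anything in the paper.
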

And finally, we conjecture that the cluster algebra structures give rise to the  Dixmier map intropduced above.
\begin{conjecture}
Let $\AA_q$ be the cluster algebra structure on $R_q[G]$ and let ${\bf x}$ be any cluster. Then any COS stratification gives rise to a Dixmier-type map, and it is a restriction of the map defined in Proposition \ref{pr:Dixmier type map}.
\end{conjecture}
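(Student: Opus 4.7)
The plan is to build the Dixmier-type map for an arbitrary cluster ${\bf x}$ exactly as in the minor-cluster case, and then show that any ambiguity coming from the choice of cluster is absorbed by the canonical homeomorphism $\eta$ of Lemma \ref{le:space homeo}. Concretely, given a COS $\II_r\subset\cdots\subset\II_0$ associated to ${\bf x}$, Corollary \ref{cor:intersection spectra q} already delivers bijections $\phi_{q,{\bf x}}$ and $\phi_{cl,{\bf x}}$ between the stratifications $\bigsqcup_k spec_{\II_k}(\AA_q)$ (respectively $\bigsqcup_k P.spec_{\II_k}(\AA)$) and the spectrum of the ambient quantum, resp.~Poisson, affine space on the cluster variables. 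Setting $DIX_{\bf x}=\phi_{q,{\bf x}}^{-1}\circ\eta\circ\phi_{cl,{\bf x}}$ then produces a Dixmier-type map whose existence only requires the cluster to admit a COS; this is the first step.

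The substantive content is to identify $DIX_{\bf x}$ with the restriction of the map of Proposition \ref{pr:Dixmier type map}. My approach is to reduce to the minor-cluster case via mutation invariance. Step two is to verify that if ${\bf x}$ and ${\bf x}'$ are adjacent clusters (differing by a single mutation at index $k$), then $DIX_{\bf x}=DIX_{{\bf x}'}$ on the intersection of their domains. This rests on three observations: (a) the stratifications are intrinsic, since they are cut out by torus-invariant primes and the Goodearl--Letzter stratification does not depend on ${\bf x}$; (b) the exchange relation $x_k x_k'=P_k$ is a polynomial identity on both sides, and by the compatible pair condition it is simultaneously a Poisson identity on $\CC[{\bf x}]$ with bracket $\Lambda$ and a skew-polynomial identity on $\CC_\Lambda[{\bf x}]$ with the same matrix $\Lambda_{k}$ after mutation (Proposition \ref{pr:comp under mutation}); (c) the canonical identification $\eta$ of Lemma \ref{le:space homeo} is natural in $\Lambda$, hence commutes with the change-of-coordinates induced by swapping $x_k$ for $x_k'$ in both localized algebras.

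Step three is to bootstrap from adjacent clusters to the minor-cluster map of Proposition \ref{pr:Dixmier type map}. Since any two seeds are connected by a finite sequence of mutations and each stratum $spec_{\II_k}$ only sees those cluster variables that are outside $\II_k$, one can propagate the equality $DIX_{\bf x}=DIX_{{\bf x}'}$ along a mutation path joining ${\bf x}$ to a cluster ${\bf x}_{{\bf w}_0,{\bf w}_0'}$ of Proposition \ref{pr:Dixmier type map}, along which $\II_k$ remains a TIP in every intermediate COS. On that terminal cluster the two maps agree by construction, and hence they agree on ${\bf x}$ as well. Finally, Lemma \ref{le:center is coeffs} and Remark \ref{re:coeffs} ensure that the image on each stratum is a uniquely determined Poisson primitive ideal in the classical quantum double Bruhat cell, so the extension is unambiguous.

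The main obstacle lies in step two, specifically in the mutation compatibility of $\eta$ across the swap $x_k\leftrightarrow x_k'$. While the quantum and classical exchange polynomials look formally the same, $\eta$ is built on the quantum/Poisson tori $(\CC_\Lambda[x_1^{\pm1},\ldots,x_n^{\pm1}],\{\cdot,\cdot\}_\Lambda)$, whereas after mutation one works with the tori attached to $\Lambda_k=E_{k,\varepsilon}^T\Lambda E_{k,\varepsilon}$. I would handle this by showing that the linear change of variables $E_{k,\varepsilon}$ on the lattice simultaneously intertwines the Poisson bracket on $\CC[{\bf x}']$ with that on $\CC[{\bf x}]$ and the skew-commutation relations on $\CC_\Lambda[{\bf x}']$ with those on $\CC_\Lambda[{\bf x}]$, so that $\eta_{\Lambda_k}$ is just the pullback of $\eta_{\Lambda}$ along $E_{k,\varepsilon}$; this reduces mutation invariance of $DIX_{\bf x}$ to the functoriality of $\eta$ in $\Lambda$, which is immediate from its definition in Appendix \ref{se:Poisson and quantum tori}.
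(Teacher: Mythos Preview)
The statement you are attempting to prove is listed in the paper as an open \emph{conjecture} in the ``Open Questions and Conjectures'' subsection; the paper does not supply a proof, so there is nothing to compare your argument against on that front.

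That said, your proposal has a genuine gap in step two. The matrix $E_{k,\varepsilon}$ of Section~\ref{se:Compatible Pairs and Mut} records how the \emph{skew form} $\Lambda$ transforms under mutation, via $\Lambda_k=E_{k,\varepsilon}^T\Lambda E_{k,\varepsilon}$; it does \emph{not} describe the change of coordinates between clusters. The actual mutation $x_k\mapsto x_k'=x_k^{-1}P_k$ is birational and involves a binomial, not a monomial, so it is not induced by any lattice automorphism of $\ZZ^n$. Consequently your claim that ``the linear change of variables $E_{k,\varepsilon}$ on the lattice simultaneously intertwines the Poisson bracket on $\CC[{\bf x}']$ with that on $\CC[{\bf x}]$'' is false as stated: there is no such monomial intertwiner, and the functoriality of $\eta$ in $\Lambda$ (which is indeed immediate) does not by itself control what happens to the maps $\phi_{q,{\bf x}}$ and $\phi_{cl,{\bf x}}$ under a genuinely birational change of chart. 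The intersection of a fixed prime of $\AA_q$ with the two distinct subrings $\CC_{\Lambda}[{\bf x}]$ and $\CC_{\Lambda_k}[{\bf x}']$ need not be related by any lattice map.

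Step three compounds the difficulty: you assume one can choose a mutation path from ${\bf x}$ to a minor cluster ${\bf x}_{{\bf w}_0,{\bf w}_0'}$ along which $\II_k$ ``remains a TIP in every intermediate COS.'' The existence of such a path is precisely the content of the paper's other open conjectures in the same subsection (that every cluster is supertoric, and that any nested pair of TIPs can be realised in some COS). Your argument therefore presupposes statements the paper explicitly leaves open, and even granting those, the core mutation-invariance step would need a different mechanism than pullback along $E_{k,\varepsilon}$.
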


\begin{appendix}
 
  \section{Torus Invariant prime ideals in non-commutative Noetherian rings}
 \label{se: goodearl letzter strat}
 In this section we recall some basic facts from \cite[Section
II]{brown-goodearl}.  Let $R$ be a ring. A two-sided ideal $\II\subset R$ is called
a {\it prime ideal} if   $arb\in \II$ for all $r\in R$,   implies that $a\in \II$ or
$b\in \II$. A two-sided ideal $\II\in R$ is called {\it primitive} if it is the
maximal two-sided ideal contained in a maximal left ideal. We denote the set of
prime ideals by $spec(R)$ and the set of primitive ideals by $prim(R)$. Both
sets come with  a natural Zariski-type topology, where the closed sets are
subsets  of $spec(R)$, resp.~$prim(R)$ such that  $V(\II)=\{P\in spec(R):
P\supset \II\}$, resp.~$V(\II)=\{P\in prim (R): P\supset \II\}$.
 
 Now let $H$ be a group acting by automorphisms on $R$. $H$ permutes the prime,
primitive and left-maximal ideals of $R$. An ideal is called $H$-stable if
$H(\II)=\II$ and, following the notation of \cite[Section II.1]{brown-goodearl}
we denote by $(\II:H)$ the largest $H$-stable  ideal contained in $H$, i.e.~
 $$(\II:H)=\bigcap_{h\in H} h(\II)\ .$$
 
 We can now define the  key notion of an $H$-prime ideal. The ring $R$ is called
$H$-prime, if $R$ is nonzero and any product of nonzero $H$-ideals is nonzero.
 \begin{definition}
 A proper  $H$-ideal $\II$ is called an $H$-prime ideal if $R/\II$ is an
$H$-prime ring.
 \end{definition} 
 If $\II$ is a prime ideal, then it is easy to see that $(\II:H)$ is  $H$-prime.
Note that in general a $H$-prime ideal need not be prime (see \cite[Section
II.1.9]{brown-goodearl}), however in the case we are most interested in, we have
the following fact.

\begin{lemma}\cite[Corollary II.1.12]{brown-goodearl}
\label{le:H-primes are primes}
Let $R$ be a Noetherian ring and let $k$ be an algebraically closed field, and
suppose that  $H$ is a $k$-torus acting  on $R$ by automorphisms   ($R$ is not
necessarily a $k$-algebra).  Then all $H$-prime ideals of $R$ are prime.
\end{lemma}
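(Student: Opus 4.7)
The plan is a reduction-and-contradiction argument that exploits the connectedness of the torus $H$. Replacing $R$ by $R/\II$, it suffices to show: an $H$-prime Noetherian ring $R$ is prime. The first step is to check that $R$ is semiprime. Its nilradical $N$ is $H$-stable since ring automorphisms preserve nilpotence, and by Noetherianity $N$ is nilpotent, say $N^m=0$. If $N\ne 0$, the product of $m$ copies of the nonzero $H$-stable ideal $N$ is zero, violating $H$-primeness; hence $N=0$.

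Next, a Noetherian semiprime ring has only finitely many minimal primes $P_1,\ldots,P_n$, and $\bigcap_i P_i=0$. The torus $H$ permutes this finite set, producing a group homomorphism $\rho\colon H\to S_n$. The heart of the argument is to show that $\rho$ is trivial, i.e., each $P_i$ is individually $H$-stable. The stabilizer $H_i=\{h\in H : h(P_i)=P_i\}$ has finite index in $H$ because the orbit $H\cdot P_i$ sits inside the finite set of minimal primes; moreover $H_i$ is Zariski-closed in $H$ because the action of the algebraic torus on $R$ is rational. Since $H=(k^\ast)^r$ is Zariski-connected, it has no proper closed subgroup of finite index, so $H_i=H$ for every $i$.

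Finally, suppose for contradiction that $n\ge 2$. Both $P_1$ and $P_2\cdots P_n$ are nonzero $H$-stable ideals: the second because $P_1$ is prime and, by minimality, contains none of the $P_j$ with $j\ne 1$, so the product cannot lie in $P_1$. But $P_1\cdot (P_2\cdots P_n)\subseteq P_1\cap\cdots\cap P_n=0$, contradicting $H$-primeness. Hence $n=1$ and $R$ is prime, as desired.

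The main obstacle is justifying the Zariski-closedness of the stabilizers $H_i$, which requires the action of $H$ on $R$ to be rational in the appropriate sense: every element of $R$ should lie in a finite-dimensional $H$-stable subspace on which $H$ acts via a morphism of algebraic groups. This rationality hypothesis is implicit in the Brown--Goodearl setup rather than stated in the lemma, and making it explicit (or deriving it from context) is the delicate part that turns the connectedness heuristic into a rigorous proof. All other steps—semiprimeness, finiteness of minimal primes, the product argument—are completely routine.
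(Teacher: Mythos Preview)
The paper does not supply its own proof of this lemma; it is quoted verbatim from Brown--Goodearl \cite[Corollary II.1.12]{brown-goodearl}. Your argument follows the standard route and is correct in outline: reduce to showing an $H$-prime Noetherian ring is prime, kill the nilradical, use finiteness of minimal primes, show $H$ fixes each minimal prime, and derive a contradiction from $n\ge 2$.

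The one point you flag as delicate---Zariski-closedness of the stabilizers and the implicit rationality hypothesis---is in fact unnecessary, and removing it is precisely what makes the lemma work under the stated hypotheses (note the parenthetical ``$R$ is not necessarily a $k$-algebra'', which rules out any appeal to a rational action). The correct argument is purely group-theoretic: since $k$ is algebraically closed, every element of $k^\ast$ has an $n$-th root for each $n\ge 1$, so $H=(k^\ast)^r$ is a \emph{divisible} abelian group. A divisible abelian group has no proper subgroup of finite index, because any quotient of a divisible group is divisible and the only finite divisible group is trivial. Hence each stabilizer $H_i$, having finite index in $H$, must equal $H$. No topology, no rationality, no algebraic-group structure on the action is needed---only the abstract group $H$ and the hypothesis that $k$ is algebraically closed. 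With this replacement your proof is complete and matches the argument in Brown--Goodearl.
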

Denote the set of $H$-prime ideals of $R$  by $H-spec(R)$. 
We naturally obtain a stratification of $spec(R)$ by the $H$-prime ideals
$$ spec(R)=\bigsqcup_{J\in H-spec(R)} spec_J(R)  \ ,$$
where $spec_J(R)=\{P\in spec(R): (P:H)=J\}$.
 Goodearl and Letzter established a general description of the
prime and primitive spectra of a large class of Noetherian algebras which
encompasses many quantized coordinate rings.  In particular, they obtained  the
following results.
 
 \begin{theorem}
 \label{th:stratification 1}
(a) \cite[Theorem II.8.10]{brown-goodearl} Let $A$ be a Noetherian algebra over
the complex numbers,   and let $H$ be a complex affine algebraic group, acting
rationally on $A$  by algebra automorphisms.   Assume that $H-spec(A)$ is
finite. Then, the primitive ideals are exactly the ideals which are maximal in
their $H$-strata. 

(b) \cite[Theorem II.8.14]{brown-goodearl} Moreover, the $H$-orbits within
$prim(A)$ coincide with the $H$-strata of $prim(A)$. In particular there are
only finitely many $H$-orbits in $prim(A)$.  
\end{theorem}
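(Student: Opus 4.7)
The plan is to deduce both statements from the Goodearl-Letzter stratification machinery, which attaches to each $H$-prime $\JJ$ of $A$ an $H$-equivariant homeomorphism between $spec_\JJ(A)$ and the prime spectrum of a commutative Laurent polynomial ring $Z_\JJ$ over $\CC$ on which $H$ acts through characters. Both (a) and (b) will then reduce to commutative facts about torus actions on $(\CC^\ast)^k$.

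First I would construct the homeomorphism in the standard Goodearl-Letzter style. Passing to $A/\JJ$, I identify the multiplicative set $\mathcal{E}_\JJ$ of nonzero $H$-eigenvectors that are regular modulo $\JJ$; rationality of the $H$-action together with the $H$-prime hypothesis verifies the Ore condition, so one may form the localization $A/\JJ[\mathcal{E}_\JJ^{-1}]$. A classical calculation identifies this localization with a crossed-product type ring whose center $Z_\JJ$ is a Laurent polynomial ring $\CC[z_1^{\pm 1},\ldots, z_{k_\JJ}^{\pm 1}]$, and contraction–extension along $Z_\JJ \hookrightarrow A/\JJ[\mathcal{E}_\JJ^{-1}]$ produces the required equivariant homeomorphism onto $spec(Z_\JJ)$.

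Next I invoke the standing hypothesis that $H-spec(A)$ is finite. The $H$-action on $Z_\JJ$ is by characters $\chi_1,\ldots,\chi_{k_\JJ} \in X(H)$, and I would argue that these characters must be $\ZZ$-linearly independent: any nontrivial $\ZZ$-relation would produce an $H$-stable monomial prime ideal of $Z_\JJ$, which pulls back under the homeomorphism to an $H$-prime of $A$ strictly containing $\JJ$, giving an infinite ascending chain of $H$-primes and contradicting finiteness. Linear independence of the $\chi_i$ forces the induced $H$-action on $maxspec(Z_\JJ) = (\CC^\ast)^{k_\JJ}$ to be transitive. With this in hand, (a) becomes the Nullstellensatz for the commutative affine $\CC$-algebra $Z_\JJ$, which says that primitive ideals of $Z_\JJ$ coincide with maximal ones; the equivariant homeomorphism transports this to the statement that a prime $P\in spec_\JJ(A)$ is primitive iff it is maximal in its stratum. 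For (b), transitivity of $H$ on $maxspec(Z_\JJ)$ transports to transitivity on the primitive ideals within $spec_\JJ(A)$; since the strata are $H$-stable, $H$-orbits in $prim(A)$ partition along strata and each stratum contributes exactly one orbit, and the total number of orbits equals the (finite) number of $H$-primes.

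The main obstacle is the first step: verifying the Ore condition for $\mathcal{E}_\JJ$ and showing that the invariant subring $Z_\JJ$ is genuinely a Laurent polynomial ring \emph{over $\CC$}, rather than over a larger skew-field of $H$-invariants. This is the technical heart of the Goodearl-Letzter theorem and is where rationality of the $H$-action is used most essentially; once the target ring is identified as $\CC[z_1^{\pm 1},\ldots, z_{k_\JJ}^{\pm 1}]$, both (a) and (b) reduce to routine toric geometry and the Nullstellensatz.
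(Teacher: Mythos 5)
The paper does not prove this result; it cites Theorem II.8.10 and Theorem II.8.14 of Brown--Goodearl verbatim, so there is no ``paper proof'' to match your argument against. Judged on its own, your sketch has a real gap in generality and leans on precisely the step it defers.

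Theorem \ref{th:stratification 1} is stated for an arbitrary complex affine algebraic group $H$ acting rationally, but the Goodearl--Letzter machinery you invoke --- the Ore localization at $H$-eigenvectors and the identification of the center $Z_\JJ$ of $A_\JJ$ as a Laurent polynomial ring over $\CC$ acted on through linearly independent characters --- is specific to the case where $H$ is an algebraic \emph{torus}. That is exactly the content of Theorem \ref{th:stratification 2} (Brown--Goodearl II.2.13), and it is proved only for tori; for a semisimple $H$, for instance, the character group $X(H)$ is trivial and your entire mechanism (including the transitivity argument on $(\CC^\ast)^{k_\JJ}$) collapses. The actual proof of II.8.10/II.8.14 in Brown--Goodearl goes through the Dixmier--Moeglin equivalence: finiteness of $H\text{-}spec(A)$ forces each stratum $spec_\JJ(A)$ to be locally closed in $spec(A)$, and (under the Nullstellensatz hypothesis on $A$, which the paper's statement also silently drops) a prime is primitive iff it is locally closed, iff it is rational. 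Maximality within a stratum is then equivalent to local closedness in $spec(A)$, which yields (a); part (b) is obtained from the fact that $H$ acts transitively on the rational ideals lying over a fixed $H$-prime, a statement proved via generic flatness and orbit arguments rather than via coordinates on a quantum torus.

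Even restricted to the torus case your proposal would apply to, you flag ``verifying the Ore condition for $\mathcal{E}_\JJ$ and showing that the invariant subring $Z_\JJ$ is genuinely a Laurent polynomial ring over $\CC$'' as the main obstacle and then assume it. But that identification \emph{is} the theorem (Theorem \ref{th:stratification 2}), and the rest --- the Nullstellensatz for $Z_\JJ$ and transitivity of the induced torus action --- is the easy part. Your auxiliary argument for linear independence of the characters is essentially right (a nontrivial relation would produce an infinite family of $H$-stable primes cut out by $z^a-\lambda$, $\lambda\in\CC^\ast$, contradicting finiteness of $H\text{-}spec$), but this fact is already packaged into Theorem II.2.13(5), so it does no new work. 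In short: the proposal is a correct reduction for torus actions modulo the deferred step, but it does not prove the theorem as stated, which requires the Dixmier--Moeglin route.
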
 

Moreover, we  can classify the primitive
ideals in each stratum.
  
\begin{theorem}
\cite[Theorem II.2.13]{brown-goodearl} 
\label{th:stratification 2}
 Let $A$ be a  Noetherian $k$-algebra, $k$ infinite, and let $H$ be an algebraic
torus acting rationally on $A$ by $k$-algebra automorphisms, and let $J\in
H-spec(A)$. 
\begin{enumerate}
\item $J$ is a prime ideal.
\item Let $\mathfrak{E}_J$ denote the set of all regular $H$-eigenvectors in
$A/J$. Then $\mathfrak{E}_J$ is a denominator set, and the localization
$A_J=(A/J)[\mathfrak{E}_J^{-1}]$ is an $H$-simple ring with respect to the
induced $H$-action.

\item $spec_J(A)$ is homeomorphic to $spec(A_J)$ via localization and
contraction.
\item $spec(A_J)$ is homeomorphic to $spec(Z(A_J))$ via contraction and
extension, where $Z(A_J)$ denotes the center of $A_J$.
\item  $Z(A_J)$ is a Laurent polynomial ring. The indeterminates can be chosen
to be $H$-eigenvectors with linearly independent $H$-eigenvalues.
\end{enumerate}
\end{theorem}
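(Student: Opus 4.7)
The plan is to exploit the $H$-weight decomposition $A = \bigoplus_{\chi \in \hat H} A_\chi$ provided by rationality of the $H$-action, to propagate it to $A/J$ and the localization $A_J$, and then to work grade-by-grade. Part (1) is immediate from Lemma \ref{le:H-primes are primes} (torus plus infinite $k$), so from now on I would pass to $A/J$ and assume $J = 0$; in particular $A$ is $H$-prime, hence prime, and $\mathfrak{E}_J$ is the set of all regular $H$-homogeneous elements.

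For (2), multiplicative closure of $\mathfrak{E}_J$ is immediate since $A_\chi \cdot A_\psi \subseteq A_{\chi+\psi}$ and the product of two regular elements in a prime Noetherian ring is regular. The right Ore condition for an eigenvector $e \in \mathfrak{E}_J$ and $a \in A$ is proved by considering the $H$-stable right ideal $I_a = \{s \in A : as \in eA\}$: rationality forces $I_a$ to be $H$-graded, and Goldie's theorem applied to the prime Noetherian ring $A$ yields a regular element once we know $I_a \ne 0$. The latter follows from the general fact that every nonzero $H$-stable two-sided ideal of an $H$-prime Noetherian ring over an infinite field contains a regular $H$-eigenvector (componentwise application of Goldie plus $H$-primeness rules out all homogeneous pieces being zero-divisors simultaneously). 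The same fact, applied to the contraction of any nonzero $H$-stable ideal of $A_J$ back to $A$, forces that contraction to meet $\mathfrak{E}_J$ and gives $H$-simplicity of $A_J$.

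Part (3) is the general localization-contraction correspondence: $spec(A_J)$ corresponds bijectively and homeomorphically to the primes $P \in spec(A)$ with $P \cap \mathfrak{E}_J = \emptyset$. I would verify that this condition is equivalent to $(P : H) = 0$, i.e.\ $P \in spec_J(A)$. The implication from left to right is immediate since any eigenvector in $P$ generates an $H$-stable subideal. Conversely, if $(P : H) \ne 0$ then it is a nonzero $H$-stable ideal of the $H$-prime ring $A$, so by the fact recalled above it contains a regular $H$-eigenvector, which lies in $P \cap \mathfrak{E}_J$.

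Parts (4) and (5) are the heart of the matter, and I expect (4) to be the main obstacle. The first step inside the $H$-simple ring $A_J$ is to show that each nonzero weight space $(A_J)_\chi$ contains a unit: the $H$-stable two-sided ideal $A_J \cdot (A_J)_\chi \cdot A_J$ equals $A_J$, and an Ore-theoretic unwinding yields an invertible eigenvector. Consequently $\Sigma := \{\chi : Z(A_J)_\chi \ne 0\}$ is a subgroup of $\hat H$ (inverses of central units are central). For (4), I would prove that every two-sided ideal of $A_J$ is generated by its intersection with $Z(A_J)$: rationality together with $k$ infinite allows extraction of $H$-eigenvectors from any ideal element, and multiplication by appropriate central units from $\Sigma$ normalizes every eigenvector to weight $0$, where it lies in $Z(A_J)$; this yields the homeomorphism $spec(A_J) \leftrightarrow spec(Z(A_J))$. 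For (5), since $\hat H$ is finitely generated and $\Sigma \subseteq \hat H$ is free abelian, I would lift a basis of $\Sigma$ to central units of $A_J$; linear independence of their weights forces algebraic independence, and by the previous step these units span $Z(A_J)$ as a Laurent polynomial algebra. The delicate point is the central-recovery in (4), where both rationality of the $H$-action and infiniteness of $k$ are essential to produce enough eigenvectors in each ideal; without them, primes of $A_J$ need not be controlled by $Z(A_J)$, so a careful argument tracking minimal weight-supports is required.
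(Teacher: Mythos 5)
This theorem is stated in the paper purely as a citation to \cite[Theorem II.2.13]{brown-goodearl}; the paper gives no proof of its own, so you are in effect reconstructing the Goodearl--Letzter argument. Your overall strategy (pass to $A/J$, use the $\hat H$-weight decomposition supplied by rationality, run Goldie theory inside graded pieces, then analyze the $H$-simple localization structurally) is the right one, but two of the steps have genuine gaps.

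In (2), your Ore argument hinges on applying the lemma ``every nonzero $H$-stable two-sided ideal of an $H$-prime Noetherian ring contains a regular $H$-eigenvector'' to the set $I_a = \{s \in A : as \in eA\}$. But $I_a$ is only a \emph{right} ideal, not two-sided, so the lemma does not apply to it as stated; you would need either a one-sided version of that lemma (which requires its own argument) or a different route to the Ore condition, such as the standard reduction to finitely many $H$-translates plus the Noetherian descending chain condition. Moreover, you state $I_a$ is $H$-graded ``by rationality,'' but that only holds when $a$ itself is an $H$-eigenvector; the reduction to that case needs to be made explicit. The supporting lemma itself is also dispatched too quickly: ``componentwise application of Goldie plus $H$-primeness rules out all homogeneous pieces being zero-divisors simultaneously'' is not a proof — a regular element $c = \sum_\chi c_\chi$ in a nonzero $H$-ideal can a priori have every $c_\chi$ a zero-divisor, and ruling this out requires a careful Goldie-rank/annihilator argument that is one of the nontrivial ingredients of the Brown--Goodearl treatment.

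In (4), the central claim — ``every two-sided ideal of $A_J$ is generated by its intersection with $Z(A_J)$'' — is argued by ``extraction of $H$-eigenvectors from any ideal element'' followed by normalization to weight $0$ via central units. Both halves fail as stated. Extraction of eigenvectors from the elements of an ideal $I$ presupposes $I$ is $H$-stable (hence graded), but the primes $P \in \operatorname{spec}(A_J)$ that the theorem must handle are precisely the ones that are \emph{not} $H$-stable; that is the whole point of the $H$-stratification. And normalization to weight $0$ by central units only reaches weights lying in $\Sigma = \operatorname{supp}(Z(A_J))$, whereas the grading group $\Gamma$ of $A_J$ can be strictly larger than $\Sigma$ (units in $(A_J)_\chi$ for $\chi \notin \Sigma$ exist but are non-central). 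The correct path is structural: $\Gamma$ is a subgroup of $\hat H \cong \ZZ^d$ (so free abelian of finite rank), $A_J$ is a crossed product $(A_J)_0 * \Gamma$ with $(A_J)_0$ simple (by $H$-simplicity and strong gradedness), and one then invokes a crossed-product prime-correspondence result to get $\operatorname{spec}(A_J) \leftrightarrow \operatorname{spec}(Z(A_J))$; item (5) then drops out of the description of the center of such a crossed product as $Z((A_J)_0)[\Sigma]$. Your sketch of (5) is essentially right once that structure is in place, but (4) cannot be finished by the eigenvector-normalization device you propose.
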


\section{Poisson and Quantum Planes and Tori}
\label{se:Poisson and quantum tori}
In this section we introduce  quantum affine spaces and tori, as well as their Poisson analogues, over a field $k$ of characteristic zero. Denote by $spec(A)$ the spectrum of prime ideals in an algebra $A$ and by $P.spec(A)$ the spectrum of Poisson prime ideals in a Poisson algebra $(A,\{\cdot,\cdot\})$--these are ideals which are prime and  stable under the Poisson bracket. Both spaces have a natural Zariski-type topology (see e.g. \cite{Goo1}) which means that $I$ lies in the closure of $J$ if and only if $J\subset I$.

\begin{definition}
Let $\Lambda$ be a skew-symmetric $n\times n$-matrix. 

\noindent(a) The Poisson plane $(k[x_1,\ldots,x_n],\Lambda)$ is the Poisson algebra  $k[x_1,\ldots,x_n]$ with bracket defined by
\begin{equation}\label{eq:plane rel}
\{x_i,x_j\}=\lambda_{ij}x_ix_j\ .\end{equation}
 
 \noindent(b) The Poisson  torus $(k[x_1^{\pm 1},\ldots,x_n^{\pm1}],\Lambda)$ is the Poisson algebra defined on $k[x_1^{\pm 1},\ldots,x_n^{\pm 1}]$ with bracket defined by \eqref{eq:plane rel}.
 
 \noindent(c) The quantum plane $k_\Lambda[x_1,\ldots,x_n]$ is the skew-symmetric algebra  defined by the commutation relations
 \begin{equation}\label{eq:q-plane rel}
 x_ix_j=q^{\lambda_{ij}} x_jx_i\ .\end{equation}

\noindent(c) The quantum torus $k_\Lambda[x_1^{\pm 1},\ldots,x_n^{\pm 1}]$ is the skew-symmetric algebra  defined by the commutation relations \eqref{eq:q-plane rel}.
\end{definition}

Let   $c_1,\ldots c_t\in \ZZ^n$ be a basis for the kernel of 
$\Lambda$. Recall that the Poisson center of a Poisson algebra $A$ is the subalgebra $C\subset A$ containing all elements $c\in A$ such that $\{c,a\}=0$ for all $a\in A$. It is easy to observe that the Poisson center of the Poisson torus $(k[x_1^{\pm 1},\ldots,x_n^{\pm1}],\Lambda)$ is the torus $k[x^{\pm c_1},\ldots x^{\pm c_n}]$ where, as usual we write $x^{c_i}=x_1^{(c_i)_1}\ldots x_n^{(c_i)_n}$. Equally well-known is the fact that the center of the quantum torus  $k_\Lambda[x_1^{\pm 1},\ldots,x_n^{\pm 1}]$  is $k[x^{\pm c_1},\ldots, x^{\pm c_n}]$. We have the following fact (see e.g. \cite{Goo1}).

\begin{lemma}
(a) The Poisson spectrum of  the Poisson torus $(k[x_1^{\pm 1},\ldots,x_n^{\pm1}],\Lambda)$ is homeomorphic to the spectrum of $k[x^{\pm c_1},\ldots ,x^{\pm c_n}]$. All Poisson prime ideals are induced by the prime ideals in $k[x^{\pm c_1},\ldots, x^{\pm c_n}]$.

\noindent(b)  The   spectrum of  the quantum torus $k_\Lambda[x_1^{\pm 1},\ldots,x_n^{\pm1}]$ is homeomorphic to the spectrum of $k[x^{\pm c_1},\ldots , x^{\pm c_n}]$. All Poisson prime ideals are induced by the prime ideals in $k[x^{\pm c_1},\ldots, x^{\pm c_n}]$.
\end{lemma}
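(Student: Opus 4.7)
The plan is to prove, in both the Poisson (a) and quantum (b) settings, that ideal extension and contraction give mutually inverse, inclusion-preserving bijections between $\mathrm{Spec}(Z)$, where $Z = k[x^{\pm c_1},\ldots, x^{\pm c_t}]$ is the (Poisson) center, and the relevant spectrum of $T := k[x_1^{\pm 1},\ldots,x_n^{\pm 1}]$ (resp.\ $k_\Lambda[x_1^{\pm 1},\ldots,x_n^{\pm 1}]$). The heart of the argument is the same in both cases: show that every (Poisson) prime $P \subset T$ is generated by $P \cap Z$. The rest is formal and routine.

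\textbf{Key step: generation by the center.} In the Poisson case, I would introduce the $k$-linear operators
$$D_i:T\to T,\qquad D_i(f)=x_i^{-1}\{x_i,f\},$$
which are well defined on $T$ and preserve any Poisson ideal $P$, since $P$ is a Poisson ideal stable under multiplication by the units $x_i^{\pm 1}$. These operators act semisimply on the monomial basis with $D_i(x^{\mathbf{w}})=(\Lambda\mathbf{w})_i\, x^{\mathbf{w}}$. Two monomials $x^{\mathbf{w}},x^{\mathbf{w}'}$ share the same eigenvalues for every $D_i$ iff $\mathbf{w}-\mathbf{w}'\in\ker\Lambda$. Given $f=\sum_{\mathbf{w}} c_{\mathbf{w}}\, x^{\mathbf{w}}\in P$, a standard Vandermonde argument applied to suitable polynomials in $D_1,\ldots, D_n$ then shows that each $\ker\Lambda$-coset component of $f$ already lies in $P$. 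Such a component can be written as $x^{\mathbf{w}_0}\cdot g$ with $g\in Z$; since $x^{\mathbf{w}_0}$ is a unit, $g\in P\cap Z$, so $P$ is generated by $P\cap Z$. For the quantum case I would replace $D_i$ by the inner automorphism difference $\sigma_i-\mathrm{id}$, where $\sigma_i(f)=x_ifx_i^{-1}$; it acts with eigenvalue $q^{(\Lambda\mathbf{w})_i}-1$ on $x^{\mathbf{w}}$. Any two-sided ideal is $\sigma_i$-invariant, and since $q$ is generic (an indeterminate in the paper's setting) the eigenvalues separate $\ker\Lambda$-cosets exactly as before.

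\textbf{From generation to bijection and homeomorphism.} With the generation step in hand, for $\mathfrak{p}\in\mathrm{Spec}(Z)$ the extension $\mathfrak{p}T$ is automatically a (Poisson) two-sided ideal because $\mathfrak{p}$ lies in the (Poisson) center; and since $\mathbb{Z}^n/\ker\Lambda$ is a free abelian group, choosing a splitting exhibits $T/\mathfrak{p}T$ as a (skew) Laurent polynomial ring over the domain $Z/\mathfrak{p}$, hence a domain, so $\mathfrak{p}T$ is prime. The compositions $\mathfrak{p}\mapsto\mathfrak{p}T\mapsto\mathfrak{p}T\cap Z=\mathfrak{p}$ and $P\mapsto P\cap Z\mapsto (P\cap Z)T=P$ are identities, the first by the decomposition of $T$ along $\ker\Lambda$-cosets and the second by the generation statement. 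Both maps are clearly inclusion-preserving, hence the bijection is a homeomorphism for the Zariski topology.

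\textbf{Main obstacle.} The delicate point is making the Vandermonde eigenvalue-separation argument precise: one must construct, for any finite collection of distinct $\ker\Lambda$-cosets, polynomial combinations of $D_1,\ldots,D_n$ (respectively $\sigma_1,\ldots,\sigma_n$) with coefficients in $k$ that pick out each coset. In the Poisson case this is unobstructed since $\mathrm{char}\, k=0$ and the tuples $\Lambda\mathbf{w}\in k^n$ are distinct across cosets; in the quantum case one needs $q^m\neq q^{m'}$ for $m\neq m'$, which is the reason the paper works over $\mathbb{C}[t^{\pm 1}]$ with $q=t$ an indeterminate. Handling roots of unity would require a different argument and is outside the scope of this lemma.
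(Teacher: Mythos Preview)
Your argument is correct and is essentially the standard direct proof of this well-known fact. Note, however, that the paper does not actually prove this lemma: it simply records it with a reference to Goodearl's survey \cite{Goo1} (``We have the following fact (see e.g.~\cite{Goo1})''), so there is no ``paper's own proof'' to compare against.

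A couple of minor remarks on your write-up. First, the freeness of $\ZZ^n/\ker\Lambda$ that you invoke to produce the splitting follows because this quotient is isomorphic to $\mathrm{Im}(\Lambda)\subset\ZZ^n$, hence torsion-free; you might make that one line explicit. Second, your identification of the obstacle in the quantum case is exactly right: the eigenvalue-separation step requires $q^m\neq 1$ for $m\neq 0$, and the paper's appendix is a bit imprecise about whether $q$ is an indeterminate or a nonzero scalar in $k$; the lemma as stated is only valid under a non-root-of-unity hypothesis (or with $q$ transcendental), which is the intended setting throughout the paper.
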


We can now describe the spectra of the quantum and Poisson planes. Let ${\bf i}\subset [1,n]$. Denote by $\Lambda_{\bf i}$ the submatrix of $\Lambda$ consisting of the rows and columns labeled by ${\bf i}=\{i_1<\ldots <i_k\}$. Since the sets $\{x_1,\ldots, x_n\}-\{x_{i_1},\ldots, x_{i_k}\}$ generate the torus invariant Poisson prime ideals of $(k[x_1,\ldots,x_n],\Lambda)$, resp.~the torus invariant prime ideals in $k_\Lambda[x_1,\ldots,x_n]$, we obtain the following fact from the results of Section \ref{se: goodearl letzter strat}.
\begin{lemma}
(a) Let $\Lambda$ be a skew-symmetric $n\times n$-matrix. For each ${\bf i}\subset [1,n]$, there exist injective continuous maps $$spec(k_{\Lambda_{\bf i}}[x_{i_1}^{\pm 1},\ldots,x_{i_k}^{\pm 1}])\hookrightarrow spec(k_\Lambda[x_1,\ldots,x_n])\ ,$$ where we set $k_{\Lambda_{\bf i}}[x_{i_1}^{\pm 1},\ldots,x_{i_k}^{\pm 1}]=k$ if ${\bf i}=\emptyset$. Analogously,$$ P.spec(k[x_{i_1}^{\pm 1},\ldots,x_{i_k}^{\pm1}],\Lambda_{i})\hookrightarrow P.spec((k[x_1^{\pm 1},\ldots,x_n^{\pm1}],\Lambda)\ .$$

(b) Moreover, 
$$spec(k_\Lambda[x_1,\ldots,x_n])=\bigsqcup\limits_{{\bf i}\subset [1,n]} spec(k_{\Lambda_{\bf i}}[x_{i_1}^{\pm 1},\ldots,x_{i_k}^{\pm 1}])\ ,$$
$$ P.spec((k[x_1^{\pm 1},\ldots,x_n^{\pm1}],\Lambda)=\bigsqcup\limits_{{\bf i}\subset [1,n]}P.spec(k[x_{i_1}^{\pm 1},\ldots,x_{i_k}^{\pm1}],\Lambda_{i})\ .$$
\end{lemma}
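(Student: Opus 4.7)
\smallskip\noindent\textbf{Proof plan.} The plan is to identify the torus invariant (Poisson) prime ideals of the (Poisson) plane with the ``coordinate ideals'' $P_{\bf j} = \langle x_j : j\notin {\bf i}\rangle$ indexed by subsets ${\bf i}\subset[1,n]$, and then invoke the Goodearl--Letzter stratification (Theorem \ref{th:stratification 2}) together with the previous lemma describing the spectra of quantum and Poisson tori.

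\emph{Step 1: classify the torus invariant (Poisson) primes.} The standard torus $(k^\ast)^n$ acts on both $k_\Lambda[x_1,\ldots,x_n]$ and on $(k[x_1,\ldots,x_n],\Lambda)$ by rescaling the generators, and this action is rational. A monomial argument exactly like the one in Proposition \ref{pr:TPP super toric q} (or its semiclassical analog) shows that a proper non-zero torus invariant prime ideal must contain at least one generator $x_j$, and an easy induction then shows that the torus invariant primes are precisely the ideals $P_{\bf i}=\langle x_j:j\notin{\bf i}\rangle$ as ${\bf i}$ ranges over subsets of $[1,n]$ (including ${\bf i}=[1,n]$, giving the zero ideal, and ${\bf i}=\emptyset$, giving the augmentation ideal which for the purposes of $spec$ should be read as the unique point of $spec(k)$).

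\emph{Step 2: identify the quotients and localizations.} The quotient $k_\Lambda[x_1,\ldots,x_n]/P_{\bf i}$ is visibly isomorphic to the quantum plane $k_{\Lambda_{\bf i}}[x_{i_1},\ldots,x_{i_k}]$ in the remaining variables, and the set of regular torus eigenvectors in this quotient is generated multiplicatively by the remaining $x_{i_h}$ (they are non zero-divisors in the quantum plane). Localizing at this Ore set produces the quantum torus $k_{\Lambda_{\bf i}}[x_{i_1}^{\pm1},\ldots,x_{i_k}^{\pm1}]$. The Poisson case is identical with Poisson primes and Poisson-regular eigenvectors. By Theorem \ref{th:stratification 2}(3) (and its Poisson analog), the stratum $spec_{P_{\bf i}}$ is then homeomorphic via localization and contraction to $spec(k_{\Lambda_{\bf i}}[x_{i_1}^{\pm1},\ldots,x_{i_k}^{\pm1}])$, respectively $P.spec(k[x_{i_1}^{\pm1},\ldots,x_{i_k}^{\pm1}],\Lambda_{\bf i})$. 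Composing these homeomorphisms onto strata with the inclusion of each stratum into the full spectrum yields the injective continuous maps of part (a).

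\emph{Step 3: assemble the disjoint decomposition.} The Goodearl--Letzter stratification by torus invariant primes tells us that
\[
spec(k_\Lambda[x_1,\ldots,x_n]) = \bigsqcup_{{\bf i}\subset[1,n]} spec_{P_{\bf i}}(k_\Lambda[x_1,\ldots,x_n]),
\]
and similarly in the Poisson setting. Combining this with the stratum-by-stratum homeomorphisms of Step 2 gives the disjoint union decomposition in part (b).

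\emph{Expected obstacles.} The only mildly delicate point is making sure that Step 1 applies in this concrete setting: the algebras here are affine spaces (not tori), so one must verify that every torus invariant prime ideal actually contains at least one $x_j$, and then proceed by induction on the number of variables remaining. The argument is essentially the monomial-cancellation trick already used in the proof of Proposition \ref{pr:TPP super toric q}, so there is no real novelty. Everything else is a formal consequence of Theorem \ref{th:stratification 2}, the previous lemma describing $spec$ and $P.spec$ of quantum/Poisson tori in terms of the kernel of $\Lambda_{\bf i}$, and the observation that localizing the quotient by $P_{\bf i}$ at the remaining variables returns precisely the quantum (respectively Poisson) torus on those variables.
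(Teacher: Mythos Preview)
Your proposal is correct and follows exactly the paper's approach: the paper simply remarks (in the sentence immediately preceding the lemma) that the torus invariant primes are generated by the complements $\{x_1,\ldots,x_n\}\setminus\{x_{i_1},\ldots,x_{i_k}\}$ and then invokes the Goodearl--Letzter stratification of Appendix~\ref{se: goodearl letzter strat}, which is precisely your Steps~1--3. One small remark: citing Proposition~\ref{pr:TPP super toric q} for Step~1 is overkill here, since the full torus $(k^\ast)^n$ acts and hence any torus-stable ideal is spanned by its monomial components, so a torus-stable prime automatically contains some $x_j$ without any appeal to the radical of $\Lambda$.
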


Finally, the following fact allows us to compare inclusion relations between strata in the quantum and Poisson cluster algebras.

\begin{lemma}
\label{le:space homeo}
Let $\Lambda$ be a skew-symmetric $n\times n$-matrix. Then the identity map on $[1,n]$ sending $x_i\in k_\Lambda[x_1,\ldots,x_n]$ to $x_i\in(k[x_1^{\pm 1},\ldots,x_n^{\pm1}],\Lambda)$ induces a homeomorphism from $spec(k_\Lambda[x_1,\ldots,x_n])\to  P.spec((k[x_1^{\pm 1},\ldots,x_n^{\pm1}],\Lambda)$.
\end{lemma}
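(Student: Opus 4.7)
\smallskip
\noindent\textbf{Proof plan.}
My plan is to reduce to the torus case via the stratification established in the preceding lemma of this appendix, handle the torus case by contraction to the (Poisson) center, and then check that the global topology matches by tracking inclusions of primes through the center identifications. For what follows I interpret the statement in the plane-to-plane form consistent with the preceding lemma, i.e.\ as a homeomorphism $spec(k_\Lambda[x_1,\ldots,x_n])\to P.spec(k[x_1,\ldots,x_n],\Lambda)$; the torus-to-torus version is a special case in which only the open stratum ${\bf i}=[1,n]$ appears.

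First I would invoke the preceding lemma to write both spectra as disjoint unions over ${\bf i}\subseteq [1,n]$ of spectra of quantum, respectively Poisson, sub-tori in the variables indexed by ${\bf i}$ with skew matrix $\Lambda_{\bf i}$. The partition is canonical: on both sides the prime ideals in a fixed stratum are exactly those whose maximal contained torus invariant (Poisson) prime is the ideal generated by $\{x_j: j\notin {\bf i}\}$. Hence the identity map on $[1,n]$ respects strata, and it suffices to construct a homeomorphism on each pair of strata.

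Fix ${\bf i}=\{i_1<\cdots<i_k\}$ and choose a $\ZZ$-basis $c_1,\ldots,c_t$ of $ker(\Lambda_{\bf i})$. The first lemma of this appendix identifies both $spec(k_{\Lambda_{\bf i}}[x_{i_1}^{\pm 1},\ldots,x_{i_k}^{\pm 1}])$ and $P.spec(k[x_{i_1}^{\pm 1},\ldots,x_{i_k}^{\pm 1}],\Lambda_{\bf i})$ with $spec(k[x^{\pm c_1},\ldots,x^{\pm c_t}])$ via contraction/extension to the (Poisson) center. The key observation is that on both sides this center consists of the same monomials $x^{\bf m}$, ${\bf m}\in ker(\Lambda_{\bf i})$, and as commutative Laurent polynomial rings the two centers are literally equal. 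Composing the two contraction homeomorphisms through this common center therefore yields a homeomorphism whose set-theoretic effect is to match primes containing the same central monomials; this is exactly the effect of the identity on variables.

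Gluing these stratum-wise homeomorphisms produces the bijection asserted in the lemma, and it remains to check global continuity, i.e.\ that inclusions of primes are preserved. Within a fixed stratum this is part of the previous step. For ${\bf i}\subsetneq {\bf i}'$, one reduces to the fact that the minimal TIP(P) of stratum ${\bf i}$ is generated on both sides by the same set $\{x_j: j\notin {\bf i}\}$ of variables, and that contraction-to-center commutes with quotienting by such torus-invariant ideals. I expect the main technical obstacle to be precisely this cross-stratum step: a Zariski-closed set in either spectrum may meet many strata, and one has to trace each of its points through the contraction to the center of the appropriate sub-torus. The argument hinges on the observation that in both the quantum and Poisson settings the quotient of the (sub-)algebra by $\langle x_j:j\notin{\bf i}\rangle$ is again a quantum or Poisson torus controlled by the same restricted datum $\Lambda_{\bf i}$, so the entire picture is governed by the common commutative skeleton $k[x^{\pm c_1},\ldots,x^{\pm c_t}]$ attached to each ${\bf i}$.
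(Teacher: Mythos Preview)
The paper states this lemma without proof: after the statement the appendix ends and the bibliography begins. So there is nothing to compare against, and your proposal is in effect a proof the paper omits.

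Your approach is the natural one and is correct in outline. The stratification of the preceding lemma reduces the question, stratum by stratum, to the torus case, and there the first lemma of the appendix identifies both $spec(k_{\Lambda_{\bf i}}[x_{i_1}^{\pm1},\ldots,x_{i_k}^{\pm1}])$ and $P.spec(k[x_{i_1}^{\pm1},\ldots,x_{i_k}^{\pm1}],\Lambda_{\bf i})$ with the spectrum of the \emph{same} commutative Laurent polynomial ring $k[x^{\pm c_1},\ldots,x^{\pm c_t}]$. Your reading of the statement as plane-to-plane (rather than plane-to-torus as literally written) is also the only one consistent with the preceding lemma and with the use made of the result in Section~\ref{se:spectra}.

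The one place where your sketch is thin is exactly the cross-stratum step you flag. What you need is that the bijection is an order isomorphism for inclusion, since in the Zariski-type topology on both sides closed sets are determined by inclusions. Concretely: if $P$ lies in the stratum indexed by ${\bf i}$ and $Q$ in the stratum indexed by ${\bf i}'$, then $P\subset Q$ forces ${\bf i}'\subseteq{\bf i}$ on both sides, and the further containment is governed by the image of $P$ under the surjection onto the (quantum/Poisson) torus in the variables ${\bf i}'$. Since that surjection, and the subsequent contraction to the center, are given on both sides by the identical commutative data $\Lambda_{{\bf i}'}$ and $\ker\Lambda_{{\bf i}'}$, the order relations match. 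Making this precise is routine once stated this way; it would strengthen the write-up to say so explicitly rather than leaving it as an ``expected obstacle.''
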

%\begin{lemma}
%(a) The Poisson spectrum of the Poisson plane $(k[x_1,\ldots,x_n],\Lambda)$ can be stratified as
%$P.spec(k[x_1,\ldots,x_n],\Lambda)=\bigsqcup\limits_{\emptyset \ne{\bf i}\subset [1,n]} P.spec
%\end{lemma}
\end{appendix}

\end{document}